\documentclass[11pt, reqno]{amsart}
\usepackage{graphicx}
\usepackage{amsthm, amsfonts, amssymb, amsmath, latexsym, array,color,amscd}
\usepackage{mathtools}
\usepackage{mathrsfs}
\usepackage{bm}
\usepackage{kbordermatrix}
\usepackage{enumerate}
\usepackage[margin=3cm]{geometry}
\usepackage{hyperref}
\usepackage{verbatim}
\hypersetup{
	colorlinks=true, 
	linkcolor=blue, 
	citecolor=blue, 
	filecolor=blue,
	urlcolor=blue
}
\usepackage[normalem]{ulem}
\usepackage{tikz-cd}
\usepackage{todonotes}
\usepackage{tikz}
\usepackage[capitalise, noabbrev]{cleveref}
\graphicspath{{Pictures/}}

\title{Trivariate Splines on Fans of Hyperplane Arrangements and Koszul Homology}

\author[]{Carles Checa}
\address[C. Checa]
{Department of Mathematics, University of Copenhagen, Universitetsparken 5, 
2100, Copenhagen, Denmark}
\email{ccn@math.ku.dk}

\author[]{Michael DiPasquale}
\address[M. DiPasquale]
{Department of Mathematical Sciences, New Mexico State University, 1290 Frenger Mall
MSC 3MB / Science Hall 236, 
Las Cruces, New Mexico 88003-8001, USA}
\email{midipasq@nmsu.edu}

\author[]{Pablo Mazón}
\address[P. Mazón]
{Department of Mathematics, CUNEF Universidad, Madrid, Spain}
\email{pablo.mazon@cunef.edu}

\author[]{Th\'ai Th\`anh Nguy$\tilde{\text{\^E}}$n}
\address[T. T. Nguy$\tilde{\text{\^e}}$n]{Department of Mathematics, University of Dayton, 300 College Park, Dayton, OH 45469, USA  and \\
	University of Education, Hue University, 34 Le Loi St., 
    Hue, Viet Nam}
\email{tnguyen5@udayton.edu}

\author[]{Liana Sega}
\address[L. Sega]
{Division of Computing, Analytics and Mathematics, University of Missouri-Kansas City, USA}
\email{segal@umkc.edu}

\author[]{Prajwal Udanshive}
\address[P. Udanshive]
{Department of Mathematics, Western University, 1151 Richmond Street, London, Ontario,  N6A 3K7, Canada}
\email{pudanshi@uwo.ca}

\author[]{Adam Van Tuyl}
\address[A. Van Tuyl]
{Department of Mathematics and Statistics
McMaster University, Hamilton, ON L8S 4L8, Canada}
\email{vantuyla@mcmaster.ca}

\author[]{Nelly Villamizar}
\address[N. Villamizar]
{Department of Mathematics, Swansea University, Fabian Way, Swansea, SA1 8EN, United Kingdom}
\email{n.y.villamizar@swansea.ac.uk}

\keywords{Multivariate splines, Hyperplane arrangements, 
Koszul homology, Hilbert functions}
\subjclass[2020]{41A15, 13D40, 13D02, 52C35}

\date{\today}

\newcommand{\RR}{\mathbb{R}}
\newcommand{\R}{\mathbb{R}}
\newcommand{\ZZ}{\mathbb{Z}}
\newcommand{\PP}{\mathbb{P}}
\newcommand{\Z}{\mathbb{Z}}
\newcommand{\calJ}{\mathcal{J}}
\newcommand{\calS}{\mathcal{S}}
\newcommand{\calR}{\mathcal{R}}
\newcommand{\calL}{\mathscr{L}}

\newcommand{\A}{\mathscr{A}}
\newcommand{\by}{{\boldsymbol{y}}}
\newcommand{\K}{{\sf{K}}}
\newcommand{\kk}{{\Bbbk}}
\newcommand{\HH}{{\mathrm{H}}}

\newcommand{\HK}{{\sf{H}}}
\newcommand{\HP}{{\sf{HP}}}

\newcommand{\br}{{\bm{r}}}

\DeclarePairedDelimiter{\ceil}{\lceil}{\rceil}

\newcommand{\syz}{\mathrm{Syz}}
\newcommand{\reg}{\mathrm{reg}}
\newcommand{\grade}{\mathrm{grade}}
\newcommand{\Hom}{\mathrm{Hom}}
\newcommand{\Ext}{\mathrm{Ext}}

\newcommand{\hilb}{\mathrm{Hilb}}

\newcommand{\cv}{\ensuremath{\bm{0}}}

\makeatletter
\newcommand*\bigcdot{\mathpalette\bigcdot@{.7}}
\newcommand*\bigcdot@[2]{\mathbin{\vcenter{\hbox{\scalebox{#2}{$\m@th#1\bullet$}}}}}
\makeatother

\usepackage[shortlabels]{enumitem}
\newtheorem{theorem}{Theorem}[section]
\newtheorem{lemma}[theorem]{Lemma}
\newtheorem{proposition}[theorem]{Proposition}
\newtheorem{corollary}[theorem]{Corollary}

\theoremstyle{definition}
\newtheorem{definition}[theorem]{Definition}
\newtheorem{notation}[theorem]{Notation}
\newtheorem{example}[theorem]{Example}

\theoremstyle{remark}
\newtheorem{remark}[theorem]{Remark}
\numberwithin{equation}{section}

\begin{document}

\begin{abstract}
We study the space of splines $\calS^{\bm r}(\Sigma^\A)$ 
where ${\bm r}$ denotes a smoothness distribution and $\Sigma^\A$ is the fan of a central hyperplane arrangement $\A$ in $\R^3$. This is the first step in the analysis of splines on three-dimensional cross-cut partitions, which naturally generalize planar cross-cut partitions.
We show that the Hilbert function of $\calS^{\bm r}(\Sigma^\A)$ is bounded by an expression that involves the dimensions of specific Koszul homology modules constructed
from the defining equations of the hyperplane arrangement $\A$
and the smoothness distribution function. 
By
exploiting this connection
with Koszul homology, we are able
to: 1) compute the dimension of the spline space in high degrees,
2) compute all values of the dimension 
of the spline space if $\A$ is \textit{generic} with five or fewer hyperplanes, and 3) compute the Hilbert function of the spline
space if $\A$ is a generic arrangement with sufficiently many hyperplanes and ${\bm r}$ is a constant distribution.  As an application of our methods, we compute $\dim \calS^0_d(\Sigma^\A)$ and $\dim \calS^1_d(\Sigma^\A)$ for all values of $d$ when $\A$ is a generic arrangement.
\end{abstract}
\maketitle

\section{Introduction}

For our purposes, a multivariate spline is a piecewise polynomial function on a subdivision of some domain in 
$\R^m$ with prescribed smoothness across faces of the subdivision.  Splines play a central 
role in geometric modeling, finite element methods, and approximation 
theory  (see, e.g., \cite{LaiSchumaker} for general background).

The study of splines has long been shaped by
connections with commutative and homological algebra, beginning with
the work of Chui and Wang \cite{CW83}, Billera \cite{Homology}, and
Billera and Rose \cite{DimSeries}, which introduced algebraic and
homological techniques for understanding the dimension and structure
of spline spaces.
Schenck and Stillman refined these methods, making a crucial connection to ideals generated by powers of linear forms
\cite{Spect,MinReg,LCoho,FatPoints}.
Within the last decade, these ideas have been extended to include splines with enhanced smoothness across lower-dimensional
faces \cite{Deepesh-Nelly-supers} and explicit lower bounds on the
dimension of splines on three-dimensional complexes
\cite{Michael-Nelly-SIAGA,Michael-Nelly-CT}.
Additional developments include geometric
conditions on two-dimensional partitions under which the dimension of the spline space
admit a combinatorial formula \cite{MichaelBeihui, T-splines} and an important counterexample to the Schenck-Stiller `$2r+1$' conjecture, showing that the dimension of spline spaces on triangulations can have unexpected dimension in relatively large degree
\cite{Beihui-Hal-Mike,YS19}.
These approaches have further been generalized to semialgebraic splines \cite{Stiller,Semialgebraic,BivariateSemialgebraic}, to algebraic frameworks for geometrically
continuous splines \cite{geometricContinuity}, and continue to inspire
ongoing work, much of which is discussed in the surveys~\cite{S16} and~\cite{Martina-Hal-Julina}.

In this paper, we consider splines on a subdivision of $\R^3$ induced by a collection of planes through the origin.  More precisely, let $\A=\{H_1,\ldots,H_n\}$ where each $H_i$ is a plane that contains the origin; $\A$ is called a \textit{central hyperplane arrangement}, where the adjective \textit{central} indicates that each hyperplane contains the origin.  The hyperplane arrangement $\A$ subdivides $\R^3$ into a collection of regions.  For instance, one plane divides $\R^3$ into two regions, two distinct planes divide $\R^3$ into four regions, and three distinct planes may divide $\R^3$ into six regions (if the three planes contain a common line) or eight regions (if the three planes do not contain a common line).  For an arbitrary hyperplane arrangement $\A\subset\R^3$, the data of the subdivision induced by $\A$ is encoded by the \textit{fan} $\Sigma^\A$ of $\A$ (see \Cref{sec:Preliminaries}).  We consider a smoothness distribution $\br:\A\to \Z_{\ge 0}$ and we study the space of functions $\calS^\br(\Sigma^\A)$ which are piecewise polynomial on the subdivision of $\R^3$ induced by $\A$ and continuously differentiable to order $\br(H_i)$ across each plane $H_i$.  We say that a function in $\calS^\br(\Sigma^\A)$ is a \textit{spline on the fan} $\Sigma^\A$.  For a given non-negative integer $d$, our primary objective is to compute the dimension of the space $\calS^\br_d(\Sigma^\A)$ of functions $F\in\calS^\br(\Sigma^\A)$ where the restriction of $F$ to each region is a homogeneous polynomial of degree $d$.

Our setting is motivated by planar \emph{cross-cut partitions}; these are partitions of a planar region by straight lines joining boundary points~\cite{CW83,LCoho}.  
The natural generalization to $\RR^3$ is a partition of a region by a collection of planes; let us call this a \textit{three-dimensional cross-cut partition}.  Our aim in this paper is to study the local case of a three-dimensional cross-cut partition.  That is, if we consider a point at which some number of planes meet in a three-dimensional cross-cut partition and the cells of the partition surrounding that point, we obtain (up to translation) a partition by planes that pass through the origin.  A full study of the local case is a natural first step in the analysis of three-dimensional cross-cut partitions, which will be a topic of future research.  This local-to-global analysis for splines is well established; for instance, Schumaker's formula for splines on two-dimensional cells yields the fundamental non-trivial expression in his lower bound~\cite{SchumakerLower}.  See also the two sets of companion papers~\cite{Semialgebraic,BivariateSemialgebraic} and~\cite{Michael-Nelly-SIAGA,Michael-Nelly-CT} which both follow the local-to-global strategy.

Planar cross-cut partitions are some of the first partitions for which dimension formulas were computed in all degrees.  For constant smoothness distribution $\bm r$, the dimension of the spline space over a cross-cut partition was computed by Chui and Wang~\cite[Lemma 4.1]{CW83} when no three lines of the cross-cut partition meet at a point.  For arbitrary cross-cut partitions, Schenck and Stillman~\cite[Theorem 5.3]{LCoho} proved that the module of splines is a free module over the polynomial ring and thus the dimension of splines over a cross-cut partition is always given by a well-known lower bound of Schumaker~\cite{SchumakerLower}.

The situation for splines on the fan $\Sigma^\A$ of a hyperplane arrangement $\A\subset\RR^3$ is significantly more complicated. 
The module $\calS^\br(\Sigma^\A)$ is not usually free and even if some genericity conditions are imposed on $\A$, we show in \Cref{thm:C1} that $\dim\calS^1_4(\Sigma^\A)$ can exhibit a dependence on geometry that is strikingly similar to the well-known Morgan-Scott split (c.f. Section~6 of \cite{AlfeldSurvey}).

The novelty in this paper is to exhibit
a connection between the Hilbert functions of these 
spline spaces and the first Koszul homology modules constructed
from the linear forms defining $\A$. 
To state one of our main results,
if $\A$ is the collection of hyperplanes 
$\A = \{H_1,\dots,H_n\}$, let $\ell_i$ be a linear form in $R = \mathbb{R}[x,y,z]$ 
that vanishes on $H_i$, for $i=1,\ldots,n$. 
Suppose $\br:\A\to\ZZ_{\ge 0}$ is a smoothness distribution on $\A$.
We let 
$\bm{\ell}^\br =(\ell_1^{\br(H_1)+1},\ldots,
\ell_n^{\br(H_n)+1})$
denote the sequence of powers of these linear forms. 
The following is the main result of the paper in simplified form when $\A$ is \textit{generic} (that is, no three hyperplanes intersect along a line).

\begin{theorem}[Theorem \ref{thm.SplineGenericArrangement}]
\label{maintheorem}
Suppose $\A$ is a generic hyperplane arrangement in $\RR^3$ of $n$ hyperplanes and $\br\colon\A\to\ZZ_{\ge -1}$ is a smoothness distribution. If $\Sigma^\A$ is the fan of $\A$, 
then for all $d\ge 0$ the dimension of the spline space $\calS^{\br}_d(\Sigma^\A)$ is given by 
\[
\dim\calS^{\br}_d(\Sigma^\A)=2\binom{d+2}{2}+2\sum_{\substack{H_1,H_2\in \A\\ H_1\neq H_2}}\binom{d-\br(H_1)-\br(H_2)}{2}+\dim \HK_1(\bm{\ell}^{\br})_d-\dim \HK_0(\bm{\ell}^{\br})_d
,
\]
where $\HK_i(\bm{\ell}^{\br})$ denotes the $i$-th Koszul homology module
of $\bm{\ell}^{\br}$(see \Cref{sec:KoszulConnection}).
\end{theorem}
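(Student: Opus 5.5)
We will use the Billera–Schenck–Stillman homological approach. Let $\mathcal{R}/\mathcal{J}$ denote the chain complex on the fan $\Sigma^\A$ (cells modulo the ideals $\ell_H^{\br(H)+1}$ on the 2-dimensional walls and sums of such ideals on rays and at the origin). Its top homology is $\calS^\br(\Sigma^\A)$. By the Euler characteristic,
\[
\dim\calS^\br_d(\Sigma^\A)=\sum_i(-1)^i\dim(\mathcal{R}/\mathcal{J})_{i,d}+\sum_{i<3}(-1)^{i}\dim H_i(\mathcal{R}/\mathcal{J})_d
\]
(with the appropriate sign convention). The plan is to compute the alternating sum of chain dimensions explicitly, and then to identify the lower homology with Koszul homology of $\bm\ell^\br$.

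\textbf{Chain dimensions.} The fan of $n$ generic planes has $2\binom n2$ rays (each line $H_i\cap H_j$ yields two rays) and $2\binom n2+2$ regions. For a wall lying in $H_i$, the term is $R/\ell_i^{r_i+1}$. For a ray on $H_i\cap H_j$, the term is $R/(\ell_i^{r_i+1},\ell_j^{r_j+1})$, a complete intersection with Hilbert function $\binom{d+2}{2}-\binom{d-r_i+1}{2}-\binom{d-r_j+1}{2}+\binom{d-r_i-r_j}{2}$. The origin contributes $R/(\ell_1^{r_1+1},\dots,\ell_n^{r_n+1})=\HK_0(\bm\ell^\br)$. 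Using the face counts (the number of walls in $H_i$ equals $2(n-1)$, the number of faces on the sphere $S^2$, etc.), the $\binom{d+2}{2}$ terms should collapse to $2\binom{d+2}2$ via the Euler characteristic of $S^2$. The single-power terms $\binom{d-r_i+1}{2}$ should cancel between walls and rays, since each plane $H_i$ carries as many walls as rays (a circle). The complete-intersection correction terms then give $2\sum\binom{d-r_i-r_j}{2}$, summed over unordered pairs, each appearing for two rays. Finally, the vertex contributes $-\dim\HK_0(\bm\ell^\br)_d$.

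\textbf{Homology.} Because the complex $\mathcal{R}$ is acyclic away from the top (the fan is a star), the homologies $H_i(\mathcal{R}/\mathcal{J})$ equal $H_{i-1}(\mathcal{J})$. The main step is to show that only one lower homology group survives and that it is isomorphic (up to degree shift conventions) to $\HK_1(\bm\ell^\br)$. We will localize at each ray: the complex of ideals around a ray on $H_i\cap H_j$ is exact, because near the line only two walls-planes meet and the intersection $(\ell_i^a)\cap(\ell_j^b)=(\ell_i^a\ell_j^b)$. Consequently the homology is concentrated at the origin. There, $H_1(\mathcal{J})$ measures relations among the $\ell_i^{r_i+1}$ modulo those coming from pairwise (Koszul) relations. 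Concretely, we map the syzygy module of $\bm\ell^\br$, graded by the link circle structure, onto the cycles, and the boundaries correspond to the Koszul relations. This gives an identification $H_1\cong \HK_1(\bm\ell^\br)$.

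\textbf{Main obstacle.} The delicate step is this identification. One must check two things: that the remaining homology group (corresponding to $H_0$ on the rays) vanishes by genericity, and that the boundary map matches exactly the Koszul differential rather than a quotient of it. To do this we would write cycles as assignments of multiples of $\ell_i^{r_i+1}$ to the walls around the sphere, use the fact that each great circle $H_i$ is cut into $2(n-1)$ arcs, and verify by a direct diagram chase that summing around the ray stars yields precisely syzygies $\sum f_i\ell_i^{r_i+1}=0$. Pairwise-generated ones then come from boundaries. After that, substituting into the Euler characteristic formula yields the stated expression for all $d\ge0$.
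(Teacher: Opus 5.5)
Your outline is correct and follows essentially the paper's route: the Euler-characteristic formula for $\calR/\calJ_\bullet$ with the face-count cancellation, then $\HH_1(\calJ_\bullet)$ presented as syzygies of the wall forms modulo those supported around a single ray, collapsed from walls to hyperplanes because the walls in each $H$ form a circle of adjacent cones. Genericity together with $\langle\ell_i^a\rangle\cap\langle\ell_j^b\rangle=\langle\ell_i^a\ell_j^b\rangle$ then shows these local relations are exactly the Koszul syzygies; the only small correction is that $\HH_0(\calJ_\bullet)=0$ holds for every essential arrangement, since $J(\cv)=\sum_\gamma J(\gamma)$, so genericity is needed only for that last identification.
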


\noindent
To prove \Cref{maintheorem}, we use the \textit{Billera-Schenck-Stillman chain complex}, which is a standard tool to study splines from an algebraic and homological perspective (see \Cref{sec.homologicalmethodsforsplines}).  The main point is that $\calS_d^\br(\Sigma)$ is (essentially) the top homology of a chain complex $\calJ[\Sigma]_\bullet$.  Thus, using the Euler-Poincar\'e characteristic, one can compute $\dim \calS^\br_d(\Sigma)$ provided one knows the Hilbert functions of the modules in the chain complex $\calJ[\Sigma]_\bullet$ and the Hilbert functions of the remaining homology modules of $\calJ[\Sigma]_\bullet$ (see \Cref{prop:EulerCharacteristicAndDimension}).  Our main insight is that, in the case of the fan $\Sigma^\A$ of a hyperplane arrangement $\A$, the module $H_1(\calJ[\Sigma^\A]_\bullet)$
can be related to the Koszul homology of $\bm{\ell}^{\br}$. 
In particular, we show in \Cref{thm.linkhomologoies} that $\dim \calS^\br_d(\Sigma^\A)$ can be bounded by an expression that involves the dimension of Koszul homology modules.  This bound is an equality if $\A$ is generic, giving the statement of \Cref{maintheorem}.

By unlocking this connection between $\dim\calS^{\br}_d(\Sigma^\A)$
and $\HK_i(\bm{\ell}^{\br})$, we can derive new results
about the Hilbert function of $\calS^{\br}(\Sigma^\A) = \bigoplus_{d \geq 0} \calS^\br_d(\Sigma^\A)$
by employing results and techniques related to Koszul homology.
We highlight some of these new results.  

In one direction, if $d \gg 0$, then the
dimensions of the Koszul homology modules in \Cref{maintheorem}
become zero, so the formula for $\dim\calS^{\br}_d(\Sigma^\A)$
reduces to the first two terms in the statement.  In fact, this is true for arbitrary arrangements (not just generic arrangements), as we show in \Cref{thm.SplineGenericArrangement}.  We next take up how large $d$ must be for this vanishing to occur.  This is given by the Castelnuovo-Mumford regularity of the Koszul homology modules.  The regularity of the Koszul homology modules can be bounded by the regularity of the ideal generated by the elements of the sequence $\bm{\ell}^\br$, which in turn can be bounded by the values of the smoothness distribution $\br$ and an interpolation statistic of the hyperplane arrangement (see \Cref{maintheoremUsingRegularity}).

In the remainder of the paper (\Cref{sec:smallnumber} to \Cref{sec:C0C1}) our objective is to compute $\dim\calS^\br_d(\Sigma^\A)$ for \textit{all} integers $d\ge 0$ for certain \textit{generic} arrangements.  For instance, if $\A$ is generic and has three, four, or five  hyperplanes, we compute 
$\dim\calS^{\br}_d(\Sigma^\A)$ for all $d$.  For
the case of three or four hyperplanes (\Cref{thm:threefourHyperplanes}), we can explicitly
compute the dimensions of the Koszul homology modules.
In the case of five hyperplanes given in \Cref{thm:fiveHyperplanes},
we show that computing the desired Hilbert function reduces
to computing the Hilbert function of $R/\langle \bm{\ell}^{\br} \rangle$, where $\langle\bm{\ell}^{\br}\rangle$ is the ideal generated by the sequence $\bm{\ell}^{\br}$.
For five hyperplanes, we describe an algorithm that uses inverse 
systems to reduce the computation to computing the Hilbert function
of a set of fat points on a conic in $\mathbb{P}^2$.  One 
can then use the algorithm of Catalisano \cite{Catalisano-1991}
to compute this new Hilbert function.  

If $\A$ is generic and has 
three, four, or five hyperplanes, then
the Hilbert function of $\calS^{\br}(\Sigma^\A)$ depends only upon
the number of hyperplanes and the smoothness distribution.  However,
as we show in \Cref{ex:sixhyperplanes} and \Cref{thm:C1}, if $\A$ is generic with six or more hyperplanes,
then one must take into consideration the equations of the hyperplanes.  This is a consequence of the fact that, even if $\A$ is generic, $\HK_1(\bm{\ell}^{\br})$ 
depends upon the choice of the linear forms $\ell_1,\ldots,\ell_n$
if $n \geq 6$.

We next restrict to the case when $\A$ has a constant smoothness distribution (that is, $\br(H_i) = r$ for all $1\le i\le n$ for a fixed integer $r \geq 0$) and many hyperplanes.  More precisely, if $\A$ is sufficiently generic and has at least $\binom{r+3}{2}$ hyperplanes, in \Cref{theorem: spline dimension many planes} we compute $\dim\calS^\br_d(\Sigma^\A)$ for all $d\ge 0$.  In \Cref{sec:C0C1}, we close out the paper by applying our results to compute the dimension of $C^0$ and $C^1$ splines on fans of generic arrangements in all degrees.

\subsection{Structure of the paper}
Our paper is structured as follows. In \Cref{sec:Preliminaries}, we provide
the needed background and terminology related to splines,
hyperplane arrangements, and other relevant topics.   \Cref{sec.homologicalmethodsforsplines} introduces the needed homological machinery, including
\Cref{prop:EulerCharacteristicAndDimension} which relates
the dimensions of the graded spline spaces to the homology
of certain chain complexes.  We then provide some useful
presentations for these homology groups.  In \Cref{sec:KoszulConnection}, we 
review the needed background on Koszul homology.  Then,
in the case our fan is the fan of a hyperplane
arrangement, we relate
the homology groups of the previous section with the Koszul
homology modules constructed from the defining equations of the 
hyperplanes.  This leads to our first main result 
(\Cref{thm.SplineGenericArrangement}). 
\Cref{sec.regularity,sec:smallnumber,sec.uniformsmoothness,sec:C0C1}
then focus on deriving consequences from this
connection, including the results described above.

\section{Preliminaries}
\label{sec:Preliminaries}

In this section, we recall the relevant definitions and results from the theory of cones, fans, hyperplane arrangements, and graded modules.

Throughout, let $R=\RR[x_1,\ldots,x_m]$ be the $\RR$-algebra of polynomials in $m$ variables, graded by total degree, and let $R_d$ denote the $\RR$-vector space of homogeneous polynomials of degree $d$.
A \emph{linear form} means an element of $R_1$ (hence homogeneous, with no constant term).

\subsection{Cones, fans, and hyperplane arrangements}
We follow~\cite{Zie} for cones and fans, and we use~\cite{OrlikTerao} as a standing reference for hyperplane arrangements (see also \cite{Stanley2007} for a standard introductory survey on hyperplane arrangements).

\subsubsection{Hyperplanes, half-spaces, and cones}
A \textit{hyperplane} $H=H_{\ell}$ in $\RR^m$ is the zero locus of a linear form $\ell\in R_1$, i.e.
$
H_\ell=\{\bm{x}\in\RR^m : \ell(\bm{x})=0\}.
$
The associated closed half-spaces are
\begin{equation}\label{eq:halfSpaces}
H_{\ell,+}=\{\bm{x}\in\RR^m : \ell(\bm{x})\ge 0\}, \qquad
H_{\ell,-}=\{\bm{x}\in\RR^m : \ell(\bm{x})\le 0\},
\end{equation}
and we also write $H_{\ell,0}=H_\ell$.  Observe that $H_{\ell,+}\cap H_{\ell,-}=H_{\ell,0}$.

A 
\textit{cone} in $\RR^m$ is a finite intersection of half-spaces of the form~\eqref{eq:halfSpaces}; its \textit{dimension} is the dimension of its linear span.
A \textit{face} of a cone is the intersection of the cone with a supporting hyperplane (hence again a cone).
A cone is called \textit{essential} if $\{\cv\}$ is a face of the cone; equivalently, it contains no positive-dimensional linear subspace.

\begin{definition}
\label{def:fan}
A \textit{fan} $\Sigma\subseteq\RR^m$ is a collection of cones $\{\sigma_1,\ldots,\sigma_s\}$ with the two properties:
\begin{enumerate}
\item every face of a cone in $\Sigma$ is also in $\Sigma$, and
\item the intersection of any two cones in $\Sigma$ is a face of both.
\end{enumerate}
We call a fan $\Sigma$ \textit{complete} if $\bigcup_{\sigma\in\Sigma}\sigma=\RR^m$.
\end{definition}

For a fan $\Sigma\subseteq\RR^m$, we put $|\Sigma|=\bigcup_{\sigma\in\Sigma}\sigma$. 
The \textit{dimension} of $\Sigma$ is the maximal dimension of a cone in $\Sigma$.
Maximal cones under inclusion are called \textit{facets}, and $\Sigma$ is \textit{pure} if all facets have the same dimension.
We call $\Sigma$ \textit{essential} if $\{\cv\}$ is a cone of $\Sigma$ (equivalently, every cone of $\Sigma$ is essential).

\begin{remark}
Ziegler~\cite{Zie} calls a cone (resp$.$ fan) \textit{pointed} if $\{\cv\}$ is a face of the cone (resp. a cone of the fan).
We use the adjective \textit{essential} to align terminology with that of hyperplane arrangements.
\end{remark}

If $\Sigma\subseteq\RR^m$ is a pure $m$-dimensional fan, a cone $\sigma\in\Sigma$ is an \textit{interior face} if it is contained in the intersection of two facets of $\Sigma$, and a \textit{boundary face} otherwise.
For $0\le i\le m$, let $\Sigma_i$ be the set of $i$-dimensional cones of $\Sigma$, and let $\Sigma_i^\circ$ be the subset of interior $i$-dimensional cones.

A pure $m$-dimensional fan $\Sigma\subseteq\RR^m$ is \textit{hereditary} if, for any two facets $\sigma$ and $\sigma'$, $\sigma$ can be connected to $\sigma'$ by a sequence of facets
$\sigma=\sigma_1,\sigma_2,\ldots,\sigma_k=\sigma'$ so that $\sigma\cap\sigma'\subseteq \sigma_i$ for all $1\le i\le k$ and $\dim(\sigma_i\cap\sigma_{i+1})=m-1$ for $1\le i\le k-1$. 

\begin{remark}
A complete fan $\Sigma\subset \RR^m$ is always hereditary. Henceforth, when we refer to a fan $\Sigma\subseteq\RR^m$, it will always be pure and $m$-dimensional.
\end{remark}

\subsubsection{Arrangements}
A \emph{hyperplane arrangement} in $\RR^m$
is a finite collection of hyperplanes  $\A=\{H_1,\dots,H_n\}$.  By definition, each hyperplane is a linear space and so contains the origin in $\R^m$.

The \emph{intersection lattice} of $\A$ is $\mathcal L(\mathcal A)=\{\cap_{H\in F} H : F\subseteq \mathcal A\}$
that is, the collection of all subspaces obtained by intersecting some subset 
of hyperplanes in $\A$ (including $\R^m$ itself, as the empty intersection), 
partially ordered by reverse inclusion.  We write $\calL_i(\A)=\{X\in\calL(\A):\mathrm{codim}(X)=i\}$.

We say that $\A$ is \emph{essential} if $\mathrm{codim}(\bigcap_{H\in\A}H)=m$, equivalently, $\bigcap_{H\in\mathcal A} H=\{\mathbf \cv\}.$
An arrangement $\A$ in $\R^m$ is \emph{generic} if for every choice of
$k\le m$ distinct hyperplanes $H_{i_1},\dots,H_{i_k}$ in $\A$ gives a rank $k$ flat, i.e., we have
$
\mathrm{codim}(H_{i_1}\cap\cdots\cap H_{i_k})=k.
$
Observe that a generic arrangement in $\RR^m$ with at least $m$ hyperplanes is necessarily essential.
\subsubsection{The fan of an arrangement}
For a central arrangement $\A=\{H_{\ell_1},\ldots,H_{\ell_n}\}$ in $\RR^m$ and $\epsilon=(\epsilon_1,\ldots,\epsilon_n)\in\{-,0,+\}^n$, define
$C_\epsilon=\bigcap_{i=1}^n H_{\ell_i,\epsilon_i}$ 
and $\Sigma^\A=\{\,C_\epsilon : \epsilon\in\{-,0,+\}^n\},$
where $H_{\ell_i,\epsilon_i}$ are the spaces defined in \cref{eq:halfSpaces}.
One can check that $\Sigma^\A$ is a fan, which we call the \textit{fan of} $\A$.  It is a complete, pure, $m$-dimensional fan.  
Moreover, $\Sigma^\A$ is essential if and only if $\A$ is essential.

The facets of $\Sigma^\A$ are the closures of the connected components of
$\RR^m\setminus \bigcup_{H\in\A}H$, i.e.\ the \emph{regions} (often called \emph{chambers})
of $\A$.

\subsection{Graded modules and Hilbert function}\label{ss:HilbertPoly} 
Following the notation above, the polynomial ring $R=\RR[x_1,\ldots,x_m]$ carries its standard grading $R=\bigoplus_{d\ge0} R_d,$
where multiplication maps $R_i\times R_j$ into $R_{i+j}$.
In a similar way, an $R$-module $M$ is called \textit{graded} if there is a direct sum decomposition $M\cong \bigoplus_{i\in \ZZ} M_i$, where $M_i$ is an $\RR$-vector space for all $i \in \ZZ$ and there is a multiplication map $R_i\times M_j\to M_{i+j}$ given by $(f,m)\to fm$. 
Our main reference of this section is \cite[Chapter 1]{CommAlg}, see also \cite[Chapter 1]{cohenMacaulay}.

\begin{definition}
If $M= \bigoplus_{i\in\Z}M_i$ is a graded $R$-module, the \textit{Hilbert function} of $M$ 
 is given by the dimensions of its graded pieces, $i\mapsto \dim M_i$.
The \textit{Hilbert series} of $M$, denoted $\hilb_M(t)$, is the generating function of the Hilbert function, namely
$
\hilb_M(t)=\sum_{i\in \Z} \dim(M_i)t^i
$.

For $a\in \ZZ$, we define $M(a)$ as the module isomorphic to $M$ whose grading is defined by $M(a)_{i}=M_{a+i}$. 
We say that $M(a)$ is the module $M$ with grading \emph{shifted} by $a$.
\end{definition}

 If $M$ is a finitely generated graded $R$-module, then the  Hilbert function $\dim M_i$ of $M$ is an alternating sum of binomial coefficients of the form $\binom{i+m-1-a}{m-1}$, for some $a\in\Z$, where we adopt the convention that $\binom{A}{B} = 0$  whenever $A<0$ or $B>a$.  If $i\gg 0$ then the values of $\dim M_i$ agree with a polynomial in the variable $i$, called the \textit{Hilbert polynomial} of $M$ and denoted $\HP_M(i)$.

Moreover,  if $M$ is a finitely generated graded $R$-module, the Hilbert series of $M$ is a rational function and has the form
$
\hilb_M(t)=\frac{p(t)}{(1-t)^{m}},
$
where $p(t)$ is a polynomial with integer coefficients (e.g.,
see \cite[Proposition 4.4.1]{cohenMacaulay}).

\subsection{Conventions for direct sums}\label{ss:indexing_conventions_direct_sums}
Let $\{M_w\}_{w\in W}$ be a finite collection of $R$-modules indexed by a finite set $W$, with $|W|$ elements. An element of
$\bigoplus_{w\in W} M_w$ is written either as $(m_w)_{w\in W}$ or as
$\sum_{w\in W} m_w b_w$, where $b_w$ denotes the standard basis symbol of the
$w$-th summand. We also write $\bigoplus_{w\in W} M_w b_w$.
In the special case where $M_w=M$ for all $w\in W$, we also write $\bigoplus_{w\in W} M b_w$ as either $M^W$ or $M^{|W|}$.
For example, if $\Sigma\subseteq \RR^m$ is a pure $m$-dimensional fan with $f$ facets, we write either $R^{\Sigma_m}$ or $R^f$ for $\bigoplus_{\sigma\in\Sigma_m} R b_\sigma$.
	
Suppose now that each $M_w$ is a graded cyclic $R$-module with homogeneous generator $e_w$ of degree $a_w$. By abuse of notation, we also denote by $e_w$
the corresponding generator of the $w$-th summand in
$\bigoplus_{w\in W} M_w$. Then
\[\bigoplus_{w\in W} M_w \cong \bigoplus_{w\in W} R e_w
\quad\text{and}\quad
\bigoplus_{w\in W} R(-a_w)b_w \cong \bigoplus_{w\in W} R e_w,\]
via the graded $R$-module isomorphism
$\sum_{w\in W} g_w b_w \longmapsto \sum_{w\in W} g_w e_w$.

\subsection{Splines on fans}
Let $\Sigma\subseteq\RR^m$ be a pure $m$-dimensional fan.  For each $\tau \in \Sigma_{m-1}$, let $\ell_\tau \in R$ be a linear form vanishing on $\tau$.  A map $\br\colon \Sigma_{m-1} \rightarrow \mathbb{Z}_{\geq -1}$ is called a \emph{smoothness distribution}.  For a function $F\colon|\Sigma|\to\RR$, we write $F_\sigma=F|_\sigma$ for the restriction of $F$ to $\sigma\in \Sigma_m$.

\begin{definition}\label{def:splines}
Let $\Sigma\subseteq\RR^m$ be a pure $m$-dimensional fan with a smoothness distribution $\br\colon\Sigma_{m-1}\to \mathbb{Z}_{\geq -1}$.  The space of \emph{splines} $S^\br(\Sigma)$ on the pair $(\Sigma,\br)$ is defined as
\begin{align}
\calS^\br(\Sigma) = \bigl\{(F_\sigma)_{\sigma \in \Sigma_m}\in R^{\Sigma_m} \colon &F_{\sigma_1} - F_{\sigma_2} \in \bigl\langle \ell_\tau^{\br(\tau)+1} \bigr\rangle, \;\label{def:splines}\\ 
&\text{for all\;} \sigma_1, \sigma_2 \in \Sigma_m \text{\; such that\; } \sigma_1 \cap \sigma_2 = \tau \in \Sigma_{m-1}\bigr\}.\nonumber
\end{align}
\end{definition}

If the smoothness distribution $\br$ is constant, i.e., $\br(\tau) = r$ for all $\tau \in \Sigma_{m-1}$, then in \cref{def:splines} we simply write $r$ instead of $\br(\tau)$, and $\calS^r(\Sigma)$ instead of $\calS^\br(\Sigma)$.

\begin{remark}
Splines are usually defined as piecewise polynomial functions that are differentiable to some order.  In fact, $\calS^\br(\Sigma)$ consists of those functions $F\colon |\Sigma|\to \RR$ so that $F|_{\sigma}$ is a polynomial for every $\sigma\in \Sigma_m$ and $F$ is differentiable to order $\br(\tau)$ across $\tau$ for every interior face $\tau\in\Sigma^\circ_{m-1}$. 
 If $\Sigma$ is hereditary, it follows from \cite[Proposition~1.2]{DimSeries} that for every face $\gamma\in\Sigma_{m-i}$, for $i\geq 1$, a spline $F\in \calS^\br(\Sigma)$ is differentiable of order $\min\{\br(\tau)\colon\tau\supseteq \gamma\}$ across $\gamma$, and differentiable to order $\min\{\br(\tau):\tau\in\Sigma_{m-1}\}$ at the origin.
\end{remark}

\begin{remark}
In this paper, we consider smoothness distributions that may vary across codimension one faces.  It is also possible to impose an order of smoothness at codimension two or higher faces that exceeds the order of smoothness across codimension one faces.  In this case, the resulting splines are said to have \emph{enhanced} smoothness at those faces, and are referred to as \emph{supersmooth splines} or \emph{supersplines}; see~\cite{Deepesh-Nelly-supers}.
\end{remark}

For $d\geq 0$, we define
\[
\calS^\br_d(\Sigma) = \{F \in \calS^\br(\Sigma) \colon F_\sigma \in R_d \; \text{for all \;} \sigma \in \Sigma_m\}.
\]
Notice that $\calS^\br_d(\Sigma)$ is an $\mathbb{R}$-vector subspace 
of $R^{\Sigma_m}$. 
In fact, 
$\calS^\br(\Sigma) \cong \bigoplus_{d \geq 0} \calS^\br_d(\Sigma)$ is a graded $\RR$-algebra~\cite[Lemma~2.5]{DimSeries}.  
In this paper, our main focus is on the $R$-module structure of
$\calS^\br(\Sigma)$; in particular, our main objective is the computation of
its Hilbert function, that is, $\dim\calS^\br_d(\Sigma)$ for $d\ge 0$.
Although the module structure is often subtle, it can be determined in certain
cases, as we illustrate in \Cref{ex:1}.

\begin{example}\label{ex:1}	

\begin{figure}
\begin{tikzpicture}[scale=1.2]
    \begin{scope}[xshift=0cm]
        \draw[thick] (-1.5,0) -- (1.5,0) 
            node[above right] {\hspace{-0.3cm}\color{blue}$\tau_1$};
        \node at (-1.55, 0.2) {\color{blue}$\tau_4$};
        \draw[thick] (0,-1.5) -- (0,1.5) 
            node[below right] {\color{blue}$\tau_2$};
        \node at (0.25, -1.35) {\color{blue}$\tau_3$};
        \node at (0.0, -1.8) {$F_1$};
        \node[font=\footnotesize] at (-1.1, 1.1) {\color{blue}$\sigma_2$};
        \node at (-0.5, 0.5) {$0$};
        \node[font=\footnotesize] at (1.1, 1.1) {\color{blue}$\sigma_1$};
        \node[font=\small] at (0.65, 0.5) {$x^{a+1}y^{b+1}$};
        \node[font=\footnotesize] at (-1.1, -1.1) {\color{blue}$\sigma_4$};
        \node at (-0.5, -0.5) {$0$};
        \node[font=\footnotesize] at (1.1, -1.1) {\color{blue}$\sigma_3$};
        \node at (0.5, -0.5) {$0$};
        \fill (0,0) circle (2pt);
    \end{scope}
    \begin{scope}[xshift=3.2cm]
        \draw[thick] (-1.5,0) -- (1.5,0);
        \draw[thick] (0,-1.5) -- (0,1.5);
        \node at (0.0, -1.8) {$F_2$};
        \node at (-0.7, 0.7) {$y^{b+1}$};
        \node at (0.7, 0.7) {$y^{b+1}$};
        \node at (-0.7, -0.7) {$0$};
        \node at (0.7, -0.7) {$0$};
        \fill (0,0) circle (2pt);
    \end{scope}
    \begin{scope}[xshift=6.4cm]
        \draw[thick] (-1.5,0) -- (1.5,0);
        \draw[thick] (0,-1.5) -- (0,1.5);
        \node at (0.0, -1.8) {$F_3$};
        \node at (-0.7, 0.7) {$0$};
        \node at (0.7, 0.7) {$x^{a+1}$};
        \node at (-0.7, -0.7) {$0$};
        \node at (0.7, -0.7) {$x^{a+1}$};
        \fill (0,0) circle (2pt);
    \end{scope}
    \begin{scope}[xshift=9.6cm]
        \draw[thick] (-1.5,0) -- (1.5,0);
        \draw[thick] (0,-1.5) -- (0,1.5);
        \node at (0.0, -1.8) {$F_4$};
        \node at (-0.7, 0.7) {$1$};
        \node at (0.7, 0.7) {$1$};
        \node at (-0.7, -0.7) {$1$};
        \node at (0.7, -0.7) {$1$};
        \fill (0,0) circle (2pt);
    \end{scope}
\end{tikzpicture}
\caption{Splines $F_1,F_2,F_3,F_4$ on the fan $\Sigma^\A$ 
of \Cref{ex:1}, with four $2$-dimensional cones $\sigma_1,\sigma_2,
\sigma_3,\sigma_4$ and codimension-one faces $\tau_1,\tau_2,\tau_3,
\tau_4$, as labelled in $F_1$.}
\label{splines2}
\end{figure}
		Let $R=\R[x,y]$, and consider the fan $\Sigma=\Sigma^\A \subseteq \mathbb{R}^2$ of the arrangement $\A$ which is the union of the $x$ and $y$ axes in $\RR^2$ (see \Cref{splines2}).  The arrangement $\A$ is both essential and generic (as is the fan $\Sigma^\A$).  We have
        ${\ell}_{\tau_1} = {\ell}_{\tau_4} = y$, and ${\ell}_{\tau_2} = {\ell}_{\tau_3} = x$.
		We also have
		$\Sigma_2 = \{\sigma_i\}_{i=1}^4$, $\Sigma_1 = \Sigma_1^0 = \{\tau_i\}_{i=1}^4$, and 
		$\Sigma_0 = \Sigma_0^0 = \{(0,0)\}$.
		
		For integers $a,b\geq -1$, we take the smoothness distribution $\br\colon\Sigma_1^\circ \rightarrow \mathbb{Z}_{\geq -1}$ by $\br(\tau_1) = \br(\tau_4) = b$, $\br(\tau_2) = \br(\tau_3) = a$.
        If we define $F_1, F_2, F_3, F_4 \in R^4$ as in Figure \ref{splines2}, these are splines in $\calS^r(\Sigma)$ of degrees $a+b+2$, \, $b+1$, \, $a+1$, and  $0$, respectively. 
    As an example of our notation, 
    $$F_2 = ((F_2)_{\sigma_1},(F_2)_{\sigma_2}, (F_2)_{\sigma_3}, (F_2)_{\sigma_4}) = (y^{b+1}, y^{b+1},0,0).$$  
    Note we have $(F_2)_{\sigma_1} - 
    (F_2)_{\sigma_2} = y^{b+1}-y^{b+1} = 0 \in 
    \langle \ell_{\tau_2}^{r(\tau_2)+1} \rangle = \langle x^{a+1} \rangle$
     where $\tau_2 = \sigma_1 \cap \sigma_2$, and $(F_2)_{\sigma_2} - (F_2)_{\sigma_4}= y^{b+1} - 0 \in \langle \ell_{\tau_4}^{r(\tau_4)+1}\rangle = \langle y^{b+1} \rangle$  where
     $\tau_4 = \sigma_2 \cap \sigma_4$.  The reader can check
     the other remaining conditions.
    These splines can be arranged in matrix form as
		\[
		\kbordermatrix{
			& F_1 & F_2 & F_3 & F_4 \\
			\sigma_1 &	x^{a+1}y^{b+1} & y^{b+1} & x^{a+1} & 1 \\
			\sigma_2 &	0 & y^{b+1} & 0 & 1 \\
			\sigma_3 &	0 & 0 & x^{a+1} & 1 \\
			\sigma_4 &	0 & 0 & 0 & 1
		}= M.
		\]
		Notice $\det (M)= (x^{a+1}y^{b+1})(x^{a+1})( y^{b+1}) = \prod_{\tau \in \Sigma_1^\circ} \ell_\tau^{\br(\tau)+1}$.  A criterion of Rose~\cite[Theorem~2.3]{Rose-Module-Bases} implies that $F_1,F_2,F_3,$ and $F_4$ form a free basis for $\calS^{\br}(\Sigma)$ as an $R$-module.  Thus
	\[
        \calS^\br(\Sigma) \cong {R}F_1 \oplus {R}F_2 \oplus {R}F_3 \oplus {R}F_4\cong {R}(-a-b-2) \oplus {R}(-b-1) \oplus {R}(-a-1) \oplus {R}.
        \]
        Note that in this case, the Hilbert function of $\calS^{\br}(\Sigma)$
        is straightforward to compute:
        $$\dim \calS^{\br}_d(\Sigma) = 
        \binom{-a-b-2+d+1}{1} + \binom{-b-1+d+1}{1} + \binom{-a-1+d+1}{1}
        + \binom{d+1}{1}.$$
        If $d \ge a+b+1$, we have
        $\dim \calS^{\br}_d(\Sigma) = 4d-2a-2b.$  Thus $\HP_{\calS^{\br}(\Sigma)}(d)=4d-2a-2b$.
	\end{example}

In this paper, we will restrict ourselves to smoothness distributions $\br:\Sigma^\A_{m-1}\to\ZZ_{\ge -1}$ that are \textit{constant along hyperplanes}, in the following sense.

\begin{definition}
\label{def:Constant_Along_Hyperplanes}
Let $\A\subseteq\RR^m$ be a hyperplane arrangement and let $\Sigma^\A$ 
the fan of $\A$.
We say that a smoothness distribution $\br:\Sigma^\A_{m-1}\to \ZZ_{\ge -1}$ is \emph{constant along hyperplanes} if, whenever $\tau_1,\tau_2\in\Sigma^\A_{m-1}$ have the same linear span, then $\br(\tau_1)=\br(\tau_2)$.

Since the linear span of faces $\tau\in\Sigma^\A_{m-1}$ is always a hyperplane in $\A$, such a distribution $\br$ naturally descends to a map $\tilde{\br}:\A\to\ZZ_{\ge -1}$, where $\tilde{\br}(H)=\br(\tau)$ for any $\tau\in\Sigma^\A_{m-1}$ whose linear span is $H$.

We abuse notation by replacing $\tilde{\br}$ with $\br:\A\to\ZZ_{\ge-1}$ to indicate that we are using a smoothness distribution which is constant along hyperplanes.
\end{definition}

\Cref{ex:1} exemplifies such a smoothness distribution.

\subsection{Reduction to the essential case}

If $\Sigma\subset \RR^m$ is a fan which is \textit{not} essential, then there is a non-trivial linear space $V$ of dimension $m-i$ ($i\ge 1$) which is contained in all of the cones of $\Sigma$.  Changing coordinates if necessary, we may assume that $V$ is defined by the vanishing of the variables $x_1,\ldots, x_i$.  Let $\pi:\RR^m\to\RR^{i}$ denote the orthogonal projection of $\RR^m$ onto $\RR^{i}$ defined in coordinates by $\pi(x_1,\ldots,x_m)=(x_1,\ldots,x_{i})$.  Since $V\subset \sigma$ for every cone $\sigma\in\Sigma$, the collection of cones $\pi(\Sigma)=\{\pi(\sigma):\sigma\in\Sigma\}$ is a fan in $\RR^{i}$.  Moreover, $\sigma\to\pi(\sigma)$ is a bijection between $j$-dimensional cones $\sigma\in\Sigma_j$ and $(j-(m-i))$-dimensional cones $\pi(\Sigma)_{j-(m-i)}$, for $m-i\le j\le m$, with inverse $\sigma'\to \pi^{-1}(\sigma')$.  It follows that a smoothness distribution $\br:\Sigma^\circ_{m-1}\to \RR$ naturally induces a smoothness distribution $\br':\pi(\Sigma)^\circ_{i-1}\to\RR$ via $\br'=\br\circ\pi^{-1}$.  We have the following structural result.

\begin{proposition}\label{prop:NonEssentialTensorProduct}
Let $\Sigma\subseteq\RR^m$ be a pure $m$-dimensional fan which is not essential.  Without loss of generality, suppose that every cone $\sigma\in\Sigma$ contains the vector space $V$ defined by the vanishing of $x_1,\ldots,x_i$ and let $\pi:\RR^m\to\RR^{i}$ be the orthogonal projection defined by $\pi(x_1,\ldots,x_m)=(x_1,\ldots,x_{i})$.  Then
\begin{align*}
 S^\br(\Sigma)\cong   \calS^{\br'}(\pi(\Sigma))\otimes_{\RR} &\RR[x_{i+1},\ldots,x_m],\\
\dim S^\br_d(\Sigma)= \sum_{j=0}^d  \binom{j+m-i}{m-i}\dim \calS^{\br'}_{d-j}(\pi(\Sigma)),& \quad \mbox{and }\quad 
\hilb_{\calS^\br(\Sigma)}(t)= \frac{\hilb_{\calS^{\br'}(\pi(\Sigma))}(t)}{(1-t)^{m-i}}.
\end{align*}
\end{proposition}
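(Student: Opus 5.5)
The plan is to reduce everything to one algebraic observation: every linear form $\ell_\tau$ attached to a codimension-one face lies in the subring $R'=\RR[x_1,\ldots,x_i]$. Indeed, each $\tau\in\Sigma_{m-1}$ contains $V=\{x_1=\cdots=x_i=0\}$, so its linear span $H_{\ell_\tau}$ contains $V$. Hence $\ell_\tau$ vanishes on $V$ and involves only $x_1,\ldots,x_i$. Moreover, $\pi(\tau)$ is a codimension-one cone in $\RR^i$ whose span is cut out by the same linear form, now viewed in $R'$. The bijection $\sigma\mapsto\pi(\sigma)$ preserves adjacency: $\sigma_1\cap\sigma_2=\tau$ if and only if $\pi(\sigma_1)\cap\pi(\sigma_2)=\pi(\tau)$, because every cone is $\pi^{-1}$ of its image. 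So the defining conditions of $\calS^\br(\Sigma)$ and $\calS^{\br'}(\pi(\Sigma))$ are indexed identically and use the same ideals $\langle \ell_\tau^{\br(\tau)+1}\rangle$, once in $R$ and once in $R'$.

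Next, write $T=\RR[x_{i+1},\ldots,x_m]$, so $R=R'\otimes_\RR T$. Using the identification of facets, $\calS^\br(\Sigma)$ is the kernel of the $R$-linear map $\phi\colon R^{\Sigma_m}\to\bigoplus_{\tau} R/\langle \ell_\tau^{\br(\tau)+1}\rangle$ sending $(F_\sigma)$ to $(F_{\sigma_1}-F_{\sigma_2})_\tau$. This map is $\phi'\otimes_\RR T$, where $\phi'$ is the analogous map over $R'$ whose kernel is $\calS^{\br'}(\pi(\Sigma))$. Here I use that $R/\langle \ell\rangle R \cong (R'/\langle \ell\rangle)\otimes_\RR T$ for $\ell\in R'$. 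Since $T$ is flat (free) over the field $\RR$, tensoring with $T$ commutes with kernels, which gives $\calS^\br(\Sigma)\cong \calS^{\br'}(\pi(\Sigma))\otimes_\RR T$ as graded modules.

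Alternatively, one can argue directly by expanding each $F_\sigma$ in the monomial basis of $T$ with coefficients in $R'$. The key point is that $g\in\langle\ell^k\rangle R$ if and only if every $T$-monomial coefficient of $g$ lies in $\langle \ell^k\rangle R'$. This step, the only real content, follows from $\ell\in R'$ and the freeness of $R$ over $R'$ with basis the monomials of $T$.

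Finally, I would read off the numerics. The isomorphism respects total degree, so $\calS^\br_d(\Sigma)\cong\bigoplus_j \calS^{\br'}_{d-j}(\pi(\Sigma))\otimes T_j$. Since $T$ has $m-i$ variables, $\dim T_j=\binom{j+m-i-1}{m-i-1}$. Summing gives the convolution formula and the Hilbert series identity $\hilb_{\calS^\br(\Sigma)}(t)=\hilb_{\calS^{\br'}}(t)\cdot(1-t)^{-(m-i)}$. I expect the main obstacle to be bookkeeping only. In particular, I would check the binomial coefficient in the stated sum against $\dim T_j=\binom{j+m-i-1}{m-i-1}$, since the Hilbert series formula forces that count.
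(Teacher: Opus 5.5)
Your proof is correct, and its main step differs from the paper's sketch. The paper constructs $F\otimes p\mapsto (F\circ\pi)\cdot p$ and observes that it is injective. It then proves surjectivity by expanding the constituents of a spline in the monomials of $T=\RR[x_{i+1},\ldots,x_m]$ and checking that the terms with a fixed monomial glue to a pulled-back spline in $\calS^{\br'}(\pi(\Sigma))$. That is exactly your ``alternative'' argument.

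Your primary argument instead presents $\calS^\br(\Sigma)$ as the kernel of the base change $\phi'\otimes_\RR T$ of the map whose kernel is $\calS^{\br'}(\pi(\Sigma))$, and uses exactness of $-\otimes_\RR T$ over a field. This gives injectivity and surjectivity in one stroke and makes $R$-linearity and gradedness automatic. It also applies unchanged to any spline-type module cut out by ideals extended from $\RR[x_1,\ldots,x_i]$. The paper's version has the merit of exhibiting the isomorphism concretely as pullback of piecewise polynomials along $\pi$. Both rest on the fact you isolated: every $\ell_\tau$ lies in $\RR[x_1,\ldots,x_i]$, and $R$ is free over that ring on the monomials of $T$.

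Your worry about the binomial coefficient is justified: the displayed $\binom{j+m-i}{m-i}$ is an off-by-one error in the statement. Since $\calS^\br_d$ consists of homogeneous splines, the coefficient must be $\dim T_j=\binom{j+m-i-1}{m-i-1}$. The displayed coefficient counts monomials of degree at most $j$. It would force $\hilb_{\calS^\br(\Sigma)}(t)=\hilb_{\calS^{\br'}(\pi(\Sigma))}(t)/(1-t)^{m-i+1}$, which contradicts the statement's own third formula.

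Concretely, take the two half-planes $x\ge 0$ and $x\le 0$ in $\RR^2$ with $r=0$. Then $\dim\calS^0_1(\Sigma)=3$, while the displayed sum gives $2+2=4$. So what you prove is the corrected, true statement. The paper's proof only says the numerics follow from the isomorphism and does not notice this.
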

\begin{proof}
The proof that $S^\br(\Sigma)\cong S^\br(\pi(\Sigma))\otimes_{\RR} \RR[x_{i+1},\ldots,x_m]$ is essentially the same as the proof of \cite[Proposition~3.1]{SSVY2024}, so we only give a sketch.  Take any spline $F\in \calS^{\br'}(\pi(\Sigma))$ and pull it back via $\pi$ to a function $F\circ \pi:\RR^m\to \RR$.  One then checks that $F\circ\pi\in \calS^{\br}(\Sigma)$.  Then define the map
\[
\phi: \calS^{\br'}(\pi(\Sigma))\otimes_{\RR} \RR[x_{i+1},\ldots,x_m]\to \calS^{\br}(\Sigma)
\]
on simple tensors by $\phi(F\otimes p)=(F\circ \pi)\cdot p$.  This is an injective map, and one needs to show that $\phi$ is surjective.  This can be done by considering the constituents of a spline $G\in \calS^{\br}(\Sigma)$ as polynomials in $\RR[x_1,\ldots,x_i]$ with coefficients in $\RR[x_{i+1},\ldots,x_{m}]$.  One shows that the terms of each constituent with a fixed coefficient fit together to make a spline in $\calS^{\br}(\Sigma)$ which is the image of a simple tensor under $\phi$, establishing surjectivity.  See the proof of \cite[Proposition~3.1]{SSVY2024} for further details.  The equations for the Hilbert function and Hilbert series of $S^\br(\Sigma)$ follow from the first isomorphism.
\end{proof}

\begin{remark}
Suppose that $\Sigma\subseteq\RR^3$ is a pure, three-dimensional fan.  If $\Sigma$ is not essential, then we may assume that every cone of $\Sigma$ contains the line defined by the vanishing of $x_1$ and $x_2$.  Let $\pi:\RR^3\to\RR^2$ be the projection onto the first two coordinates.  Then $\pi(\Sigma)\subseteq \RR^2$ and $S^\br(\Sigma)\cong \calS^{\br'}(\pi(\Sigma))\otimes_\RR \RR[x_3]$ by \Cref{prop:NonEssentialTensorProduct}.  The dimension of spline spaces on fans in $\RR^2$ are completely determined (see~\cite{FatPoints}), and thus the dimension of the spline space $\calS^{\br}(\Sigma)$ is known if $\Sigma$ is not essential.  Thus in our work we will only consider \textit{essential} fans $\Sigma\subseteq \RR^3$.
\end{remark}
    
\section{Homological methods for splines}
\label{sec.homologicalmethodsforsplines}

In this section we recall homological methods for splines that were introduced by Billera~\cite{Homology} and were later refined in an important way by Schenck and Stillman~\cite{LCoho}.  We also prove a presentation for a particular homology module that is crucial for the remainder of the paper.

\subsection{Chain complexes for splines}
Throughout this section we use the conventions outlined in \Cref{ss:indexing_conventions_direct_sums}.  Let $\Sigma\subseteq\RR^m$ be a pure, hereditary, $m$-dimensional fan.  Denote by $\calR[\Sigma]_{\bullet}$ the chain complex
\[
\calR[\Sigma]_{\bullet}\colon\quad  0\longrightarrow R^{\Sigma_m}\xrightarrow{\partial_m} R^{\Sigma^\circ_{m-1}} \xrightarrow{\partial_{m-1}} \cdots\xrightarrow{\partial_2} R^{\Sigma^\circ_1} \xrightarrow{\partial_1} R^{\Sigma_0^\circ} \longrightarrow 0\ ,
\]
which is the cellular chain complex (relative to its boundary) of the finite CW complex $X$ obtained by intersecting $\Sigma$ with a solid disc of radius $1$. 
 We shall also use the notation $\calR_i=R^{\Sigma_i}$.
 The matrices representing the differentials $\partial_i$ have entries which are all $0,1,$ or $-1$.  As $\Sigma$ is not compact, some care must be taken here -- the homology modules of $\calR[\Sigma]_\bullet$ compute the cellular homology of the CW complex $X$ relative to its boundary with coefficients in $R$ (the so-called \textit{Borel-Moore} homology of $\Sigma$ relative to its boundary).  We will need an explicit description of the cellular boundary maps only when $m=3$ in the proof of \Cref{lem:H1Jpres}, and we will explicitly describe them there.  Readers may refer to~\cite{AssHom} for a general description of these boundary maps.

Let $\Sigma\subset\RR^m$ be a complete and essential fan.  For each $\tau \in \Sigma_{m-1}$, let ${\ell}_\tau \in \mathbb{R}$ be a linear form vanishing on $\tau$, and define
$J(\tau) = \langle {\ell}_\tau^{\br(\tau)+1} \rangle$.  
For all $\beta \in \Sigma_i$, where $0 \leq i \leq m-1$, define 
$$J(\beta) = \sum_{\substack{\tau\in\Sigma_{m-1}\\\tau \supseteq \beta}} J(\tau).$$
Collecting these ideals yields a subcomplex of $\calR[\Sigma]_\bullet$ denoted $\calJ[\Sigma]_\bullet$, given by
\[
\calJ[\Sigma]_\bullet\colon \quad  0\longrightarrow \bigoplus_{\sigma\in\Sigma_m} J(\sigma)=0 \rightarrow \bigoplus_{\tau\in\Sigma^\circ_{m-1}} J(\tau)\xrightarrow{\partial_{m-1}}\cdots\xrightarrow{\partial_2} \bigoplus_{\beta\in\Sigma^\circ_1} J(\beta) \xrightarrow{\partial_1} J(\cv) \longrightarrow 0\ .
\]
The chain complex introduced by Billera in \cite{Homology} and modified by Schenck and Stillman in \cite{LCoho} is the quotient of the complex $\calR[\Sigma]_\bullet$ by $\calJ[\Sigma]_{\bullet}$, which we denote by $\calR/\calJ[\Sigma]_\bullet$:
\begin{equation}\label{eq:BBScomplex}
	\calR/\calJ[\Sigma]_\bullet\colon 0\rightarrow\bigoplus_{\sigma \in \Sigma_m} {R} \xrightarrow{\bar\partial_m} \bigoplus_{\tau \in \Sigma^\circ_{m-1}} {R}/J(\tau) \xrightarrow{\bar\partial_{m-1}} \cdots\xrightarrow{\bar\partial_2} \bigoplus_{\beta\in\Sigma^\circ_1} R/J(\beta) \xrightarrow{\bar\partial_1} \dfrac{R}{J(\cv)} \rightarrow 0. 
\end{equation}	
If the fan $\Sigma$ is fixed, we will write $\calR_\bullet$, $\calJ_\bullet$, and $\calR/\calJ_\bullet$ to denote the chain complexes $\calR[\Sigma]_\bullet$, $\calJ[\Sigma]_\bullet$, and $\calR/\calJ[\Sigma]_\bullet$, respectively.  We will sometimes write
$\calJ^\br$ if we want to highlight the specific smoothness
condition we are considering. 

A key observation here is that \Cref{def:splines} implies that the top homology module of the chain complex \cref{eq:BBScomplex} satisfies
$\HH_m(\calR/\calJ_\bullet) \cong S^\br(\Sigma)$.
 We summarize some well-known properties of $\calR/\calJ[\Sigma]_{\, \bullet}$ when $\Sigma\subset\RR^3$ is a fan (see for instance \cite{Spect, LCoho,Deepesh-Nelly-supers,Michael-Nelly-SIAGA}).

\begin{proposition}\label{prop:FrequentlyUsedIsomorphisms}
	If $\Sigma\subseteq\R^3$ is a fan with smoothness distribution $\br$, then the following hold: 
    \begin{enumerate}[label=({\arabic*})]
        \item $\calS^\br(\Sigma)\cong \HH_3(\calR/\calJ_\bullet)\cong R\oplus \HH_{2}(\calJ_\bullet)$;
        \item $\HH_1(\calR/\calJ_\bullet)=\HH_0(\calR/\calJ_\bullet)=0$;
        \item $\HH_i(\calR/\calJ_\bullet)\cong \HH_{i-1}(\calJ_\bullet)$ for $i=1,2$. 
    \end{enumerate}
\end{proposition}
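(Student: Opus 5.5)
The plan is to use the short exact sequence of chain complexes
\[
0\longrightarrow \calJ_\bullet\longrightarrow \calR_\bullet\longrightarrow \calR/\calJ_\bullet\longrightarrow 0
\]
and its long exact sequence in homology, together with the topology of $\calR_\bullet$. Since $\Sigma\subseteq\RR^3$ is complete and essential, $\calR_\bullet$ computes the cellular homology of the closed unit ball $X$ relative to its boundary sphere, with coefficients in $R$. Hence $\HH_i(\calR_\bullet)\cong \HH_i(X,\partial X;R)$, which is $R$ for $i=3$ and $0$ otherwise. (If one prefers not to rely on essentialness, hereditary together with completeness still gives the same relative homology, because $|\Sigma|=\RR^3$.) The only nonzero homology is generated by the sum of all facets with compatible orientations, i.e.\ the globally constant spline.

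The long exact sequence then reads
\[
\cdots\to \HH_i(\calJ_\bullet)\to \HH_i(\calR_\bullet)\to \HH_i(\calR/\calJ_\bullet)\to \HH_{i-1}(\calJ_\bullet)\to \HH_{i-1}(\calR_\bullet)\to\cdots.
\]
For $i=1,2$ both $\HH_i(\calR_\bullet)$ and $\HH_{i-1}(\calR_\bullet)$ vanish, so the connecting map is an isomorphism $\HH_i(\calR/\calJ_\bullet)\cong\HH_{i-1}(\calJ_\bullet)$. This is item (3).

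For item (2), $\HH_0(\calR/\calJ_\bullet)$ is sandwiched between $\HH_0(\calR_\bullet)=0$ and $0$, so it vanishes. By (3), $\HH_1(\calR/\calJ_\bullet)\cong \HH_0(\calJ_\bullet)$. Here $\HH_0(\calJ_\bullet)=J(\cv)/\partial_1(\bigoplus_{\beta} J(\beta))$, and $J(\cv)=\sum_\tau J(\tau)$. Every $\tau\in\Sigma_2$ contains some interior ray $\beta$, and $\partial_1$ of $\beta$ is $\pm\cv$. It follows that $\partial_1(J(\beta)b_\beta)=J(\beta)\supseteq J(\tau)$, so $\partial_1$ is surjective and $\HH_0(\calJ_\bullet)=0$.

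For item (1), $\HH_3(\calR/\calJ_\bullet)\cong\calS^\br(\Sigma)$ holds by \Cref{def:splines}, since the kernel of $\bar\partial_3$ consists exactly of the tuples satisfying the divisibility conditions across interior $2$-cones. In degree three the long exact sequence gives
\[
0=\HH_3(\calJ_\bullet)\to \HH_3(\calR_\bullet)=R\to \HH_3(\calR/\calJ_\bullet)\to \HH_2(\calJ_\bullet)\to \HH_2(\calR_\bullet)=0,
\]
where $\HH_3(\calJ_\bullet)=0$ because $\calJ_3=0$. This yields a short exact sequence $0\to R\to\calS^\br(\Sigma)\to\HH_2(\calJ_\bullet)\to 0$. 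It splits because the map $R\to\calS^\br(\Sigma)$ sends $1$ to the constant spline $(1,\dots,1)$. A retraction is given by evaluating the component on a fixed facet $\sigma_0$, i.e.\ $F\mapsto F_{\sigma_0}$, which is $R$-linear and sends $(1,\dots,1)\mapsto1$. This gives $\calS^\br(\Sigma)\cong R\oplus\HH_2(\calJ_\bullet)$ as graded modules.

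The main obstacle is making the identification $\HH_\ast(\calR_\bullet)\cong \HH_\ast(B^3,S^2;R)$ precise, including the orientation and sign conventions of the cellular differentials. This requires checking that $\calR_\bullet$, which omits boundary faces, is indeed the relative cellular complex of the ball modulo its boundary. I would handle this by intersecting $\Sigma$ with the unit sphere to get a cellular decomposition of $S^2$, and noting that the cones of $\Sigma$ correspond to the cells of the cone over this decomposition. Then $\calR_\bullet$ is the shifted cellular chain complex of $S^2$ augmented by the cone point, whose homology is $R$ in top degree only, since $\tilde\HH_\ast(S^2)$ is concentrated in degree $2$.
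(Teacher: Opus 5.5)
Your proposal is correct: the long exact sequence of $0\to\calJ_\bullet\to\calR_\bullet\to\calR/\calJ_\bullet\to 0$ yields all three items once you know $\HH_i(\calR_\bullet)$ is $R$ for $i=3$ and $0$ otherwise (the relative homology of the ball modulo its boundary sphere). The splitting via the constant spline and the retraction $F\mapsto F_{\sigma_0}$ is also valid, and the paper gives no proof of its own, deferring to the cited literature, where this long-exact-sequence argument is the standard one.
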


The Hilbert function of $\calS^{\bm r}(\Sigma)$ can be computed from the Hilbert functions of the terms of $\calJ_\bullet$ and its lower homologies (this was Billera's crucial insight in \cite{Homology}).  We record the three-dimensional version of this observation, which is sufficient for our purposes.  The proof is straightforward homological algebra; see e.g. \cite[Proposition 2.6]{Michael-Nelly-SIAGA}.

\begin{proposition}\label{prop:EulerCharacteristicAndDimension}
If $\Sigma\subseteq\R^3$ is a complete fan with smoothness distribution $\br$, then
	\begin{align*}
		\dim \calS^\br_d(\Sigma)
        & = 2\dim R_d+ \sum_{\tau\in\Sigma_2}\dim J(\tau)_d -\sum_{\gamma\in\Sigma_1}\dim J(\gamma)_d-\dim (R/J(\cv))_d+\dim \HH_1(\calJ_\bullet)_d.
	\end{align*}
In particular,
\[
\dim \calS^r_d(\Sigma)\ge 2\dim R_d+ \sum_{\tau\in\Sigma_2}\dim J(\tau)_d -\sum_{\gamma\in\Sigma_1}\dim J(\gamma)_d-\dim (R/J(\cv))_d.
\]
\end{proposition}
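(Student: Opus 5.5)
We take the Euler characteristic of the short exact sequence of complexes $0\to\calJ_\bullet\to\calR_\bullet\to\calR/\calJ_\bullet\to0$ in a fixed degree $d$, and combine it with \Cref{prop:FrequentlyUsedIsomorphisms} and the topology of $\calR_\bullet$.

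The first step is to compute the homology of $\calR_\bullet$. Since $\Sigma$ is complete in $\RR^3$, $\calR_\bullet$ is the cellular chain complex of the closed $3$-ball $X$ relative to its boundary sphere, with coefficients in $R$. Hence $\HH_3(\calR_\bullet)\cong R$ and $\HH_i(\calR_\bullet)=0$ for $i<3$. For a complete fan all cones are interior, so $\calR_i=R^{\Sigma_i}$. Writing $f_i=|\Sigma_i|$, the Euler characteristic of $X$ relative to $\partial X$ gives $f_3-f_2+f_1-f_0=-1$, where $f_0=1$ if $\Sigma$ is essential. (A non-essential fan can be handled by the same count with the appropriate cells; alternatively, one checks directly that the same identity holds.)

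Next we apply the long exact sequence in homology to $0\to\calJ_\bullet\to\calR_\bullet\to\calR/\calJ_\bullet\to0$. Because $J(\sigma)=0$ for facets, $\calJ_3=0$. Taking alternating sums of dimensions in degree $d$ of the chain modules of $\calR/\calJ_\bullet$ gives
\[\dim\HH_3(\calR/\calJ)_d-\dim\HH_2(\calR/\calJ)_d=f_3\dim R_d-\sum_{\tau}\dim(R/J(\tau))_d+\sum_{\gamma}\dim(R/J(\gamma))_d-\dim(R/J(\cv))_d,\]
using the vanishing $\HH_1(\calR/\calJ_\bullet)=\HH_0(\calR/\calJ_\bullet)=0$ from \Cref{prop:FrequentlyUsedIsomorphisms}(2). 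We then substitute $\dim(R/J)_d=\dim R_d-\dim J_d$. This produces $(f_3-f_2+f_1)\dim R_d=(f_0-1)\dim R_d+\dots$. Combined with the relation $f_3-f_2+f_1=2$ coming from the Euler characteristic computation, the $\dim R_d$ terms collapse to $2\dim R_d$, which yields
\[2\dim R_d+\sum_\tau\dim J(\tau)_d-\sum_\gamma\dim J(\gamma)_d-\dim(R/J(\cv))_d.\]
Finally, $\HH_3(\calR/\calJ)=\calS^\br(\Sigma)$ and $\HH_2(\calR/\calJ)\cong\HH_1(\calJ_\bullet)$ by \Cref{prop:FrequentlyUsedIsomorphisms}(1),(3). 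Moving the $\HH_2$ term to the right-hand side gives the stated equality, and the inequality follows because $\dim\HH_1(\calJ_\bullet)_d\ge0$.

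The main obstacle is the sign and cell bookkeeping in the Euler characteristic. One has to keep track of the difference between the relative (Borel–Moore) chain count $f_3-f_2+f_1-f_0$ and the constant $2$. For instance, with three coordinate planes we get $f_3=8$, $f_2=12$, $f_1=6$, so $f_3-f_2+f_1=2$, which is consistent. We also need to confirm that $\HH_i(\calR_\bullet)$ vanishes for $i<3$; this follows from the sphere/ball topology.
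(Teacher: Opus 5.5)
Your route is the standard one and matches what the paper intends; the paper gives no argument of its own and only cites a reference for this ``straightforward homological algebra''. You take the Euler characteristic of $\calR/\calJ_\bullet$ in degree $d$. You use \Cref{prop:FrequentlyUsedIsomorphisms} to kill $\HH_0,\HH_1$ and to identify $\HH_3\cong\calS^\br(\Sigma)$ and $\HH_2\cong\HH_1(\calJ_\bullet)$. Then $f_3-f_2+f_1=2$ collapses the $\dim R_d$ terms to $2\dim R_d$. The argument is correct in substance, but the bookkeeping you flagged contains an actual error.

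\textbf{Sign of the Euler relation.} The first relation you state, $f_3-f_2+f_1-f_0=-1$, has the wrong sign. The relative cellular complex has $f_i$ cells in dimension $i$, so $f_0-f_1+f_2-f_3=\chi(B^3)-\chi(S^2)=-1$, i.e.\ $f_3-f_2+f_1-f_0=1$. Your wrong version is exactly what produces the stray line $(f_3-f_2+f_1)\dim R_d=(f_0-1)\dim R_d+\cdots$. With $f_0=1$ that line would wipe out the $2\dim R_d$ term, and it contradicts the relation $f_3-f_2+f_1=2$ you then correctly use. Delete it. Obtain $f_3-f_2+f_1=2$ directly as $\chi(S^2)=2$ for the cell decomposition of the unit sphere cut out by $\Sigma$, where rays, $2$-cones and $3$-cones give vertices, edges and faces.

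\textbf{Non-essential fans.} Drop the parenthetical about these. It is false: if the lineality space is a line, then $f_0=0$, $f_1=1$ and $f_2=f_3$, so $f_3-f_2+f_1=1$. It is also unnecessary, since the paper defines $\calJ_\bullet$ and $J(\cv)$ only for complete essential fans.

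\textbf{Minor.} Your computation of $\HH_\bullet(\calR_\bullet)$ is never used once you invoke \Cref{prop:FrequentlyUsedIsomorphisms}.
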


\subsection{A presentation for $\HH_1(\calJ_\bullet)$ for three-dimensional hyperplane arrangements}\label{ss:H1arrangements} 

In this section, we prove a presentation of $\HH_1(\calJ_\bullet)$ for the fan of an essential three-dimensional hyperplane arrangement.  We use the following presentation for $\HH_1(\calJ_\bullet)$ from \cite[Lemma~9.12]{AssHom}.

\begin{lemma}\label{lem:H1Jpres}
	Let $\Sigma\subseteq\R^3$ be a
    complete and essential three-dimensional  fan.
    Define the following  $R$-submodules of $\bigoplus\limits_{\tau\in\Sigma_2} Re_\tau$:
	\begin{gather*}
V^{\br}_{\Sigma}=\biggl\lbrace\sum_{\tau\in\Sigma_2} f_\tau e_\tau \in \bigoplus\limits_{\tau\in\Sigma_2} Re_{\tau}\colon \sum_{\tau\in\Sigma_2} f_\tau \ {\ell}_\tau^{\br(\tau)+1}=0\biggr\rbrace \quad\text{and} \\
V^{1,\br}_{\Sigma} = \biggl\{\sum_{\tau\in\Sigma_2} f_\tau e_\tau \in \bigoplus\limits_{\tau\in\Sigma_2} Re_{\tau}\colon 
\sum_{\tau\in\Sigma_2, \tau\supseteq\gamma}  f_\tau \, {\ell}_\tau^{\br(\tau)+1}=0 \, \text{\ for each }\ \gamma\in\Sigma_1\biggr\}.
	\end{gather*}
Then, $V^{1,\br}_{\Sigma}\subseteq V^{\br}_{\Sigma}$ and $\HH_1(\calJ[\Sigma]_\bullet)\cong V^{\br}_{\Sigma}/V^{1,\br}_{\Sigma}$ as $R$-modules. 
\end{lemma}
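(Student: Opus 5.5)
We compute $\HH_1(\calJ_\bullet)$ directly from the definitions and then identify the quotient. Since $\Sigma$ is complete and essential in $\RR^3$, every $\tau\in\Sigma_2$ and every ray $\gamma\in\Sigma_1$ is interior, and the origin is the unique $0$-cone. The complex $\calJ_\bullet$ in low degrees is
\[
\bigoplus_{\tau\in\Sigma_2} J(\tau)\xrightarrow{\partial_2}\bigoplus_{\gamma\in\Sigma_1}J(\gamma)\xrightarrow{\partial_1} J(\cv)\longrightarrow 0 .
\]
We use the relative cellular differentials. Intersecting with the unit sphere, $\Sigma$ gives a cellulation of $S^2$. Each $2$-cone $\tau$ corresponds to an arc with two endpoint rays $\gamma_\tau^+,\gamma_\tau^-$, and $\partial_2(e_\tau)=e_{\gamma_\tau^+}-e_{\gamma_\tau^-}$ with respect to a chosen orientation. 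The map $\partial_1$ sends each $e_\gamma$ to $e_{\cv}$, up to a uniform sign fixed by orienting each ray outward. Thus $\HH_1(\calJ_\bullet)=\ker\partial_1/\operatorname{im}\partial_2$, where we restrict to the ideals.

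The plan is to define a surjection $\Phi\colon V^\br_\Sigma\to\HH_1(\calJ_\bullet)$ and compute its kernel.

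\textbf{Step 1.} Encode elements of $\operatorname{im}\partial_2$ through the free module $\bigoplus_\tau Re_\tau$. An element of $\bigoplus_\tau J(\tau)$ has the form $\sum_\tau g_\tau \ell_\tau^{\br(\tau)+1}e_\tau$. Let $\psi\colon\bigoplus_\tau Re_\tau\to\bigoplus_\tau J(\tau)$ be the surjection $e_\tau\mapsto \ell_\tau^{\br(\tau)+1}e_\tau$.

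\textbf{Step 2.} Identify the cycles. Since every $\gamma$ maps to $e_\cv$, we have $\ker\partial_1=\{\sum_\gamma h_\gamma e_\gamma : h_\gamma\in J(\gamma),\ \sum_\gamma h_\gamma=0\}$. Each $h_\gamma\in J(\gamma)$ can be written as $h_\gamma=\sum_{\tau\supseteq\gamma}c_{\gamma,\tau}\ell_\tau^{\br(\tau)+1}$.

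The key topological input is that the $1$-skeleton of the cellulation of $S^2$ is connected. Hence in $\calR_\bullet$ (free coefficients) the sequence $\calR_2\to\calR_1\to\calR_0\to0$ is exact at $\calR_1$: $H_1$ of $S^2$ relative to the cone structure (equivalently the reduced $\tilde H_0$ of the $1$-skeleton) vanishes.

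We use this to define $\Phi$. Given $f=\sum f_\tau e_\tau\in V^\br_\Sigma$, consider the cycle obtained by distributing each term $f_\tau\ell_\tau^{\br(\tau)+1}$ to one chosen endpoint ray $\gamma_\tau^+$ of $\tau$, namely
\[
z(f)=\sum_\tau f_\tau\ell_\tau^{\br(\tau)+1}e_{\gamma_\tau^+}.
\]
Its coefficients lie in $J(\gamma)$, and $\partial_1 z(f)=\sum_\tau f_\tau\ell_\tau^{\br(\tau)+1}e_{\cv}=0$ because $f\in V^\br_\Sigma$. Set $\Phi(f)=[z(f)]$.

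A different choice of endpoint changes $z(f)$ by $\pm\partial_2(f_\tau\ell_\tau^{\br(\tau)+1}e_\tau)$, which is a boundary, so $\Phi$ is well defined. Surjectivity follows by rewriting an arbitrary cycle $\sum_\gamma h_\gamma e_\gamma$ with the decomposition from Step 2. Moving each summand $c_{\gamma,\tau}\ell_\tau^{\br(\tau)+1}e_\gamma$ to $e_{\gamma_\tau^+}$ along the arc $\tau$ alters the cycle only by boundaries. The result is $z(f)$ with $f_\tau=\sum_\gamma c_{\gamma,\tau}$, and the condition $\sum_\gamma h_\gamma=0$ gives $f\in V^\br_\Sigma$.

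\textbf{Step 3.} Show $\ker\Phi=V^{1,\br}_\Sigma$; this is the main obstacle. Here $z(f)$ is a boundary precisely when there exist $g_\tau$ with
\[
\sum_\tau f_\tau\ell_\tau^{\br(\tau)+1}e_{\gamma_\tau^+}=\sum_\tau g_\tau\ell_\tau^{\br(\tau)+1}(e_{\gamma_\tau^+}-e_{\gamma_\tau^-}).
\]
This relation must be compared with the vertex-wise conditions defining $V^{1,\br}_\Sigma$. The issue is that $V^{1,\br}_\Sigma$ should be read as the image of the relations generated vertex-wise. Tuples $f$ whose terms sum to zero around each ray $\gamma$ with $\gamma=\gamma_\tau^+$ give $z(f)=0$ directly, but the two-endpoint bookkeeping requires care. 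I expect to adjust the construction by splitting $f_\tau$ between both endpoints, or by using the symmetric description in which $f$ is recorded ray by ray. Then $\Phi(f)=0$ becomes equivalent, after subtracting $\psi(g)$-type modifications, to $f$ lying in $V^{1,\br}_\Sigma+\ker\psi$.

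Finally, $\ker\psi=0$, since $R$ is a domain and $\ell_\tau^{\br(\tau)+1}\neq0$; this forces the kernel to be exactly $V^{1,\br}_\Sigma$. The containment $V^{1,\br}_\Sigma\subseteq V^\br_\Sigma$ follows by summing the vertex conditions over all $\gamma$, after checking that each $\tau$ is counted compatibly with the chosen endpoint convention. Getting this bookkeeping consistent is the delicate step, and it is where I would follow the argument of \cite[Lemma~9.12]{AssHom}.
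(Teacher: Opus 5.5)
\textbf{Step 3 is the gap.} Your setup and Steps 1--2 are correct, and your $\Phi$ is the paper's argument made explicit. The paper embeds $\calJ_\bullet$ in a short exact sequence of complexes whose middle row $\bigoplus_\tau Re_\tau\to\bigoplus_{\gamma\subseteq\tau}Re_{\tau,\gamma}\to\bigoplus_\tau Re_\tau$ is acyclic (a direct sum over $\tau$ of $R\to R^2\to R$). The long exact sequence then gives $\HH_1(\calJ_\bullet)\cong\operatorname{coker}\bigl(\bigoplus_\gamma V^{\br}_\gamma\to V^{\br}_\Sigma\bigr)$. Lifting $f\in V^{\br}_\Sigma$ to $\sum_\tau f_\tau e_{\tau,t(\tau)}$ and pushing down to $\bigoplus_\gamma J(\gamma)$ produces exactly your $z(f)$, so $\Phi$ is the inverse of that connecting isomorphism. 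Connectivity of the $1$-skeleton is used in neither argument.

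The equality $\ker\Phi=V^{1,\br}_\Sigma$ is the whole content of the lemma, and you do not prove it. The remarks about splitting $f_\tau$ between endpoints and about $V^{1,\br}_\Sigma+\ker\psi$ are not an argument, and $\ker\psi=0$ plays no role. Deferring to \cite[Lemma~9.12]{AssHom} defers to the very statement being proved. Your proof of $V^{1,\br}_\Sigma\subseteq V^{\br}_\Sigma$ by summing the conditions over all $\gamma$ also belongs to the literal reading of $V^{1,\br}_\Sigma$ (conditions imposed at every ray simultaneously). Under that reading Step 3 is false. For $\tau,\tau'$ sharing a ray $\gamma$, the Koszul syzygy $\ell_{\tau'}^{\br(\tau')+1}e_\tau-\ell_\tau^{\br(\tau)+1}e_{\tau'}$ is a local syzygy at $\gamma$, hence in $\ker\Phi$. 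But it fails the condition at the other ray of $\tau$, which $\tau'$ does not contain.

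\textbf{How to close it.} The missing argument is short once $V^{1,\br}_\Sigma$ is read, as you suspected and as the paper uses it, as the submodule generated by the local syzygies $s=\sum_{\tau\supseteq\gamma}f_\tau e_\tau$ with $\sum_{\tau\supseteq\gamma}f_\tau\ell_\tau^{\br(\tau)+1}=0$. Then $V^{1,\br}_\Sigma\subseteq V^{\br}_\Sigma$ is immediate on generators.

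For $V^{1,\br}_\Sigma\subseteq\ker\Phi$, use the independence of the endpoint choice that you already established. For a local syzygy at $\gamma$, take $\gamma^+_\tau=\gamma$ for every $\tau\supseteq\gamma$; then $z(s)=\bigl(\sum_{\tau\supseteq\gamma}f_\tau\ell_\tau^{\br(\tau)+1}\bigr)e_\gamma=0$.

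Conversely, suppose $z(f)=\partial_2\bigl(\sum_\tau g_\tau\ell_\tau^{\br(\tau)+1}e_\tau\bigr)$. Comparing the coefficients of each $e_\gamma$ says exactly that
\[
s_\gamma=\sum_{\gamma^+_\tau=\gamma}(f_\tau-g_\tau)e_\tau+\sum_{\gamma^-_\tau=\gamma}g_\tau e_\tau
\]
is a local syzygy at $\gamma$. Since each $\tau$ has one $+$ and one $-$ endpoint, $\sum_\gamma s_\gamma=f$, so $f\in V^{1,\br}_\Sigma$. Together with your surjectivity, this gives $V^{\br}_\Sigma/V^{1,\br}_\Sigma\cong\HH_1(\calJ_\bullet)$.
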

\begin{proof}
For each ray $\gamma\in\Sigma_1$, define the module of relations around $\gamma$ as
\[V^{\br}_\gamma 
= 
\Big\{\sum_{\tau\in\Sigma_2, \tau \supseteq \gamma} 
f_\tau e_{\tau,\gamma}\in \bigoplus_{\tau\in\Sigma_2, \tau\supseteq \gamma} R e_{\tau,\gamma} \colon \sum_{\tau\in\Sigma_2,\tau\supseteq \gamma}f_\tau \ell_\tau^{\br(\tau)+1} 
= 
0 \Big\} .
\]
If $J(\cv)$ denotes the ideal of the central vertex of $\Sigma$, we construct the diagram in Figure \ref{fig:H1Jpres}, where the top row is the complex $\calJ_\bullet[\Sigma]$. The diagram commutes and the columns are exact.
\begin{figure}[htp]
    \centering
	\begin{tikzcd}
		0 & 0 & 0\\
		\bigoplus\limits_{\tau\in\Sigma_2} J(\tau)b_\tau \ar{r}{\delta_2}\ar{u} & \bigoplus\limits_{\gamma\in\Sigma_1} J(\gamma) b_\gamma \ar{r}{\delta_1}\ar{u} & J(\cv) \ar{u}\\
		\bigoplus\limits_{\tau\in\Sigma_2} Re_\tau \ar{r}{\delta'_2}\ar{u}{q_2} & \bigoplus\limits_{\gamma\in\Sigma_1}\bigoplus\limits_{\substack{\tau\in\Sigma_2\\ \gamma\subseteq\tau}} Re_{\tau,\gamma} \ar{r}{\delta'_1}\ar{u}{q_1} & \bigoplus\limits_{\tau\in\Sigma_2} Re_{\tau}\ar{u}{q_0} \\
		0\ar{u}\ar{r} & \bigoplus\limits_{\gamma\in\Sigma_1} V^{\br}_\gamma \ar{u}\ar{r}{\iota} & V^{\br}_{\Sigma} \ar{u}\\
		& 0\ar{u} & 0\ar{u} 
	\end{tikzcd}
    \caption{Commutative diagram for the proof of Lemma~\ref{lem:H1Jpres}.}
    \label{fig:H1Jpres}
\end{figure}

    The maps $q_0,q_1,q_2$ in Figure~\ref{fig:H1Jpres} are defined on basis symbols by $q_0(e_\tau)={\ell}_\tau^{\br(\tau)+1}$, $q_1(e_{\tau,\gamma})={\ell}_{\tau}^{\br(\tau)+1}b_{\gamma}$, and $q_2(e_{\tau})={\ell}_\tau^{\br(\tau)+1}b_\tau$, and extended linearly.  Observe that, by definition, $V^{\br}_{\Sigma}$ is the kernel of $q_0$ and $\bigoplus_{\gamma\in\Sigma_1} V_{\gamma}^{\br}$ is the kernel of $q_1$.  Clearly $q_2$ is an isomorphism.  Thus the columns are exact.

    The maps $\delta_1$ and $\delta_2$ in Figure~\ref{fig:H1Jpres} are cellular boundary maps, and thus depend on a choice of orientation for each face of $\Sigma$ (the homology of the chain complex is independent of the choice of orientation in the sense that different orientations yield isomorphic homology modules).  For concreteness, we orient each ray towards the origin so that $\delta_1(b_\gamma)=1$ for all $\gamma\in\Sigma_1$.  That is, $\delta_1$ just takes the sum of the components: $\delta_1(\sum_{\gamma\in\Sigma_1}f_{\gamma}b_\gamma)=\sum_{\gamma\in\Sigma_1} f_{\gamma}$.  Furthermore, observe that each two-dimensional face $\tau\in\Sigma_2$ contains two rays $\gamma_1,\gamma_2\in\Sigma_1$.  For each $\tau\in\Sigma_2$ we will choose one of these to be the \textit{terminal} ray, written $t(\tau)$, and one of these to be the \textit{initial} ray, written $i(\tau)$.  With this convention and the convention for $\delta_1$, we may take the map $\delta_2$ to be $\delta_2(b_{\tau})=b_{t(\tau)}-b_{i(\tau)}$ on basis symbols, and extended in an $R$-linear fashion.

    We now have no choice in defining the maps $\delta'_1,\delta'_2$ if we want the diagram in Figure~\ref{fig:H1Jpres} to commute.  That is, for a fixed $\tau\in\Sigma_2$ and $\gamma\in\Sigma_1$ so that $\gamma\subseteq \tau$ we must have $\delta'_1(e_{\tau,\gamma})=e_{\tau}$ and $\delta'_2(e_{\tau})=e_{\tau,t(\tau)}-e_{\tau,i(\tau)}$.  Thus we see that the middle row is in fact exact; the map $\delta'_2$ has the effect of gluing together the basis symbols $e_{\tau,i(\tau)}$ and $e_{\tau,t(\tau)}$ that correspond to different rays in $\Sigma_1$ that are contained in $\tau$, leaving just a copy of $e_{\tau}$ for every codimension one face $\tau\in\Sigma_2$ in the cokernel.  

    Since the middle row in Figure~\ref{fig:H1Jpres} is exact and the map $\delta'_1$ is surjective, the long exact sequence in homology yields the isomorphisms $\HH_2(\calJ^\br[\Sigma]_\bullet)\cong \mbox{ker}(\iota)$ and $\HH_1(\calJ^\br[\Sigma]_\bullet)\cong \mbox{coker}(\iota)$.  The image of $\bigoplus\limits_{\gamma\in\Sigma_1} V^{\br}_\gamma$ under $\iota$ is precisely $V^{1,\br}_{\Sigma}$, so we are done.
\end{proof}

Now suppose $\A=\{H_1,\ldots,H_k\} \subseteq\RR^3$ is an essential hyperplane arrangement and $\Sigma^\A$ is the fan of $\A$.  Recall that the fan of a hyperplane arrangement is always complete.  Let $\br:\A\to \ZZ_{\ge -1}$ be a smoothness distribution (see \Cref{def:Constant_Along_Hyperplanes}).  Our next objective is to prune the presentation for $H_1(\calJ^\br[\Sigma])$ given in \Cref{lem:H1Jpres} when $\Sigma=\Sigma^\A$.  This will prepare us to make the connection to Koszul homology in \Cref{sec:KoszulConnection}.   To this end, let
\begin{equation}\label{eq.V_Ardefn}
V^{\br}_\A=\Big\{\sum_{H\in\A} f_He_H: \sum_{H\in\A}f_H{\ell}_H^{\br(H)+1}=0\Big\}\subseteq \bigoplus_{H\in\A} Re_H
\end{equation}
be the syzygy module of the forms $\{\ell_H^{\br(H)+1}:H\in\A\}$.

Recall that $\calL_2(\A)$ is the set of all lines appearing as the intersection of at least two hyperplanes of $\A$.  We define $V_\A^{1,\br}$ to be the $R$-submodule of $\bigoplus_{H\in \A} Re_H$ generated by
\begin{equation}\label{eq.V_Ar-1defn}
\Big\{\sum_{H\supseteq W} f_He_H: W\in\calL_2(\A), \sum_{H\supseteq W} f_H{\ell}_H^{\br(H)+1}=0\Big\}.
\end{equation}
Clearly $V_\A^{1,\br}\subseteq V_{\A}^\br$.

\begin{lemma}\label{lem:PruningH1Pres}
Let $\A\subseteq\RR^3$ be an essential hyperplane arrangement and $\Sigma=\Sigma^\A$ the fan of $\A$.  Then
$
{V^{\br}_\A}/{V^{1,\br}_\A}\cong {V^{\br}_{\Sigma}}/{V^{1,\br}_\Sigma}.
$
In particular, $\HH_1(\calJ[\Sigma^\A]_\bullet)\cong {V^{\br}_\A}/{V^{1,\br}_\A}$.
\end{lemma}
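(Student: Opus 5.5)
We compare the two presentations through the natural "collapsing" map
\[
\pi\colon \bigoplus_{\tau\in\Sigma_2} Re_\tau\longrightarrow \bigoplus_{H\in\A} Re_H,\qquad e_\tau\mapsto e_{H(\tau)},
\]
where $H(\tau)$ is the hyperplane of $\A$ spanned by $\tau$. Since $\br$ is constant along hyperplanes and we may take $\ell_\tau=\ell_{H(\tau)}$, we have $\ell_\tau^{\br(\tau)+1}=\ell_{H(\tau)}^{\br(H(\tau))+1}$. Hence $\pi$ commutes with the two evaluation maps $e_\tau\mapsto \ell_\tau^{\br(\tau)+1}$ and $e_H\mapsto\ell_H^{\br(H)+1}$, and so $\pi(V^\br_\Sigma)\subseteq V^\br_\A$.

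For the ray modules, note that each ray $\gamma\in\Sigma_1$ spans a line $W\in\calL_2(\A)$ (here we use that $\A$ is essential). Moreover, the faces $\tau\supseteq\gamma$ lie in distinct hyperplanes $H\supseteq W$, and every such $H$ occurs, contributing exactly one face through $\gamma$. So $\pi$ restricted to $\bigoplus_{\tau\supseteq\gamma}Re_\tau$ is an isomorphism onto $\bigoplus_{H\supseteq W}Re_H$, and it carries $V^\br_\gamma$ isomorphically onto the relations around $W$ appearing in \eqref{eq.V_Ar-1defn}. It follows that $\pi(V^{1,\br}_\Sigma)=V^{1,\br}_\A$, and $\pi$ induces a map $\bar\pi\colon V^\br_\Sigma/V^{1,\br}_\Sigma\to V^\br_\A/V^{1,\br}_\A$.

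Surjectivity of $\bar\pi$ comes from a section. For each $H\in\A$, fix one face $\tau_H\in\Sigma_2$ lying in $H$ and set $s(e_H)=e_{\tau_H}$. Then $\pi\circ s=\mathrm{id}$, and $s$ maps $V^\br_\A$ into $V^\br_\Sigma$, so $\pi(V^\br_\Sigma)=V^\br_\A$.

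Injectivity is the main step. We must show $\ker\pi\cap V^\br_\Sigma\subseteq V^{1,\br}_\Sigma$; since $V^{1,\br}_\Sigma\subseteq V^\br_\Sigma$ and its image is $V^{1,\br}_\A$, this suffices. The kernel of $\pi$ is generated by differences $e_\tau-e_{\tau'}$ with $\tau,\tau'$ in the same hyperplane $H$. Each such difference automatically lies in $V^\br_\Sigma$, because $\ell_\tau=\ell_{\tau'}$ gives $\ell_\tau^{\br(\tau)+1}-\ell_{\tau'}^{\br(\tau')+1}=0$. Now $H\cap\Sigma^\A$ is the fan induced on the plane $H$ by the lines $H\cap H'$, which is a complete two-dimensional fan, so its $2$-cones can be ordered cyclically. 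Thus it is enough to treat $\tau,\tau'\subset H$ adjacent along a ray $\gamma$. In that case $\tau,\tau'$ are two of the faces around $\gamma$, and since they are the two halves of $H$ meeting at $\gamma$, the element $\ell_H^{\br(H)+1}e_\tau-\ell_H^{\br(H)+1}e_{\tau'}$ lies in $V^\br_\gamma$.

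This only yields $\ell_H^{\br(H)+1}(e_\tau-e_{\tau'})$, not $e_\tau-e_{\tau'}$ itself, so the real obstacle is subtler. The point is that $V^\br_\gamma$ lives on basis symbols $e_{\tau,\gamma}$, while $V^{1,\br}_\Sigma$ is the image of $\bigoplus_\gamma V^\br_\gamma$. Thus a vector supported near $\gamma$ lies in $V^{1,\br}_\Sigma$ exactly when the sum of its coefficients times the corresponding forms vanishes. For $e_\tau-e_{\tau'}$ with $\tau,\tau'$ both containing $\gamma$, this sum is $\ell_H^{\br(H)+1}-\ell_H^{\br(H)+1}=0$, so $e_\tau-e_{\tau'}\in V^\br_\gamma$ directly, viewed in $\bigoplus_{\tau\supseteq\gamma}Re_{\tau,\gamma}$ with two faces lying in the same hyperplane.

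This argument only goes through if the two opposite faces of $H$ at $\gamma$ are both among the faces containing $\gamma$ — which they are. The earlier count of "one face per hyperplane" through $\gamma$ was therefore wrong: each $H\supseteq W$ contributes two faces at $\gamma$. Consequently, restricted to $\gamma$, the map $\pi$ is not an isomorphism but a surjection whose kernel consists exactly of these differences, and those differences already lie in $V^\br_\gamma$. The surjectivity of $\pi$ from $V^\br_\gamma$ onto the $W$-relations still holds via the section above.

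Chaining these adjacent differences around the cyclic order in $H$ shows $\ker\pi\subseteq V^{1,\br}_\Sigma$, so $\bar\pi$ is injective, hence an isomorphism. The final claim, $\HH_1(\calJ[\Sigma^\A]_\bullet)\cong V^\br_\A/V^{1,\br}_\A$, then follows from \Cref{lem:H1Jpres}.
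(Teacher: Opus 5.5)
Once you discard the claim you later retract (each $H\supseteq W$ contributes two faces through $\gamma$, not one), your argument is correct and is essentially the paper's proof. It uses the same collapsing map, the same identifications $\pi(V^{\br}_\Sigma)=V^{\br}_\A$ and $\pi(V^{1,\br}_\Sigma)=V^{1,\br}_\A$, and the same chaining of adjacent differences $e_\tau-e_{\tau'}$ within a hyperplane to get $\ker\pi\subseteq V^{1,\br}_\Sigma$. One small fix: to lift a $W$-relation into $V^{\br}_\gamma$, choose for each $H\supseteq W$ a face $\tau_H\subseteq H$ that contains $\gamma$ (as the paper does). The global section $s$ does not work here, because its chosen faces need not contain $\gamma$.
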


\begin{proof}
 The final statement is immediate from Lemma~\ref{lem:H1Jpres}, so we prove the isomorphism ${V^{\br}_\A}/{V^{1,\br}_\A}\cong {V^{\br}_{\Sigma}}/{V^{1,\br}_\Sigma}.$  Consider the $R$-module homomorphism 
$\phi: \bigoplus\limits_{\tau\in \Sigma^\A_2} Re_\tau\to \bigoplus\limits_{H\in\A}Re_H$ defined by $\phi(e_\tau)=e_H$ if $\tau\subseteq H$.  This is well-defined since a codimension one cone $\tau$ can only be contained in one hyperplane of $\A$. 
 In the remainder of this proof we let $H_\tau$ be the unique hyperplane in $\A$ which contains $\tau$.  We also have that $\phi$ is a graded map since we assume $\br$ is constant along hyperplanes.

 We first claim that $\phi(V^{\br}_\Sigma)=V^{\br}_\A$.  Clearly if $\sum f_\tau e_\tau \in V^{\br}_\Sigma$ then 
 $\phi(\sum\limits_{\tau\in\Sigma_2} f_\tau e_\tau)=$ $\sum\limits_{\tau\in\Sigma_2}f_\tau e_{H_\tau}$ is in $V^\br_{\A}.$
 So $\phi(V^\br_\Sigma)\subseteq V^\br_\A$.  Now suppose that $\sum_{H\in\A} f_H e_H\in V^{\br}_\A$.  Then, by definition, $\sum_{H\in\A} f_H{\ell}_H^{\br(H)+1}=0$.  Now we may take any collection $\{\tau_H\}_{H\in \A}\subseteq \Sigma_2$ so that $\tau_H\subseteq H$.  Then $\sum\limits_{H\in \A} f_He_{\tau_H} \in \bigoplus\limits_{\tau\in\Sigma_2} Re_\tau$ and $\sum f_H {\ell}_{\tau_H}^{\br(\tau_H)+1}=\sum f_H {\ell}_{H}^{\br(H)+1}=0$.  So $\phi(V^\br_\Sigma)=V^\br_\A$.

Next we claim that $\phi(V^{1,\br}_{\Sigma})=V^{1,\br}_\A$.  First suppose that $\sum\limits_{\tau\supseteq\gamma} f_\tau e_\tau\in V^{1,\br}_\Sigma$ is one of the $R$-module generators of $V^{1,\br}_\Sigma$.  Then $\phi(\sum\limits_{\tau\supseteq\gamma} f_\tau e_\tau)=\sum\limits_{\tau\supseteq\gamma} f_\tau e_{H_\tau},$ and $\sum f_\tau{\ell}_{H_{\tau}}^{\br(H_\tau)+1}=\sum f_\tau{\ell}_{\tau}^{\br(\tau)+1}=0,$ and the hyperplanes $H_\tau$ all intersect along the line $W$ which is the span of the ray $\gamma$.  So $\phi(\sum\limits_{\tau\supseteq\gamma} f_\tau e_\tau)\in V^{1,\br}_{\A}.$  Now suppose that $W\in\calL_2(\A)$ and $\sum\limits_{H\supseteq W} f_H e_H\in V^{1,\br}_\A$ is one of the $R$-module generators of $V^{1,\br}_\A$.  Since $\Sigma^\A$ is the fan induced by $\A$, there is a ray $\gamma\in \Sigma^\A_1$ so that the span of $\gamma$ is $W$ and for every $H\in\A$ so that $W\subseteq H$ there exists a $\tau_H\in\Sigma_2$ which contains $\gamma$ and whose span is $H$.  Then observe that 
$
\sum\limits_{\tau_H\supseteq\gamma} f_H {\ell}_{\tau_H}^{\br(\tau_H)+1}=\sum\limits_{\tau_H\supseteq\gamma} f_H {\ell}_{H}^{\br(H)+1}=0,
$
so $\sum\limits_{\tau_H\supseteq \gamma} f_H e_{\tau_H}\in V^{1,\br}_\Sigma$.  Clearly $\phi(\sum\limits_{\tau_H\supseteq \gamma} f_H e_{\tau_H})=\sum\limits_{H\supseteq W} f_H e_H$.  So $\phi(V^{1,\br}_{\Sigma})=V^{1,\br}_\A$.

Now consider $\ker(\phi)$ -- clearly this is generated as an $R$-module by the differences
\[
D_1=\{e_{\tau}-e_{\tau'}: \tau,\tau'\subseteq H\mbox{ for some } H\in \A\}.
\]
Since ${\ell}_\tau={\ell}_{\tau'}$ if $\tau,\tau'$ are both subsets of the same hyperplane, $e_{\tau}-e_{\tau'}\in V^{\br}_{\A}$ whenever $e_{\tau}-e_{\tau'}\in \ker(\phi)$.  Since these span $\ker(\phi)$, it follows that $\ker(\phi)\subseteq V^{\br}_{\Sigma}$.  

Now we claim that $\ker(\phi)\subseteq V^{1,\br}_{\Sigma}$ also.  For this we observe that $\ker(\phi)$ is also generated as an $R$-module by the differences
\[
D_2=\{e_{\tau}-e_{\tau'}: \tau,\tau'\subseteq H\mbox{ for some } H\in \A \mbox{ and } \tau\cap\tau'=\gamma\in\Sigma^\A_1\}.
\]
To prove this, suppose that $\tau,\tau'\in\Sigma^\A_2$ so that $\tau,\tau'$ are both contained in the same hyperplane $H\in\A$.  Then $\tau,\tau'$ are chambers of the hyperplane arrangement $\A^H$, which is the hyperplane arrangement in the linear space $H\cong \RR^2$ formed by $H'\cap H$ for every $H'\in\A$ with $H'\neq H$.  It follows that there exist $\tau=\tau_1,\tau_2,\ldots,\tau_k=\tau'\in\Sigma^\A_2$ so that $\tau_1,\ldots,\tau_k$ are all in $H$ and $\tau_i, \tau_{i-1}$ are adjacent chambers in $\A^H$ for $i=2,\ldots,k$, and hence $\tau_{i}\cap\tau_{i-1}=\gamma_i\in\calL_2(\A)$.  Thus $e_\tau-e_{\tau'}=\sum_{i=1}^{k-1} (e_{\tau_i}-e_{\tau_{i+1}})$, and thus $D_2$ also generates $\ker(\phi)$ as an $R$-module.  Since $D_2\subseteq V^{1,\br}_{\Sigma}$, it follows that $\ker(\phi)\subseteq V^{1,\br}_\Sigma$ also.  
Putting everything together, we have
\[
\dfrac{V^{\br}_\Sigma}{V^{1,\br}_{\Sigma}}\cong \dfrac{V^{\br}_{\Sigma}/\ker(\phi)}{V^{1,\br}_{\Sigma}/\ker(\phi)}\cong \dfrac{\phi(V^{\br}_{\Sigma})}{\phi(V^{1,\br}_{\Sigma})}\cong\dfrac{V^\br_\A}{V^{1,\br}_\A}. \qedhere
\]
\end{proof}

\section{Connection to Koszul Homology}\label{sec:KoszulConnection}

In this section we use \Cref{lem:PruningH1Pres} to relate the homology module $\HH_1(\calJ[\Sigma^\A]_\bullet)$ to a standard tool in commutative algebra, namely \textit{Koszul homology}. We first recall the definition of Koszul homology and a few of its salient properties.

Let $\kk$ be a field ($\kk=\mathbb R$ for our purposes), $R=\kk[x_1, \dots, x_m]$ and $\by=(y_1, \dots, y_n)$  in $R$. We let $I$ be the ideal generated by $\by$. 
Denote by $\K_{\bigcdot}(\by)$ the Koszul complex on $\by$ and by $\HK_{\bigcdot}(\by)$ its homology. To describe $\K_{\bigcdot}(\by)$, we let $F$ denote a free $R$-module with basis $e_1, \dots, e_n$ and we let $\varphi\colon F\to I$ denote the $R$-module homomorphism with $\varphi(e_i)=y_i$. Then we have $\K_i(\by)=\bigwedge^iF$ and the differential $\partial=\partial^{\K_{\bigcdot}(\by)}$ of $\K_{\bigcdot}(\by)$ is described for $z_1, \dots, z_i\in F$ by 
\[
\partial_i(z_1\wedge\cdots\wedge z_i)=\sum_{j=1}^{i}(-1)^{j+1} \varphi(z_j) z_1\wedge\cdots\wedge\widehat{z_j}\wedge \cdots \wedge z_i. 
\]
It is clear that $\HK_0(\by)=R/I$.  Furthermore, $\HK_1(\by)=\ker(\partial_1)/\text{Im}(\partial_2)$ can be described as follows. The module $\ker(\partial_1)$ is the {\it module of syzygies} of $\by$, namely
\begin{equation}\label{eq.koszulkernel}
\ker(\partial_1)=\ker(\varphi)=\Big\{\sum_{i=1}^n \alpha_ie_i\in F\colon \sum_{i=1}^n \alpha_iy_i=0\Big\}.
\end{equation}
The map $\partial_2\colon F\wedge F\to F$ is described for $z,z'\in F$ by 
$
\partial_2(z\wedge z')=\varphi(z)z'-\varphi(z')z,
$
and its image is generated by the elements of $F$ of the form $y_ie_j-y_je_i$ with $i,j\in [n]$, $i<j$, which are called {\it Koszul syzygies} of $\by$. 

When $\by$ is a minimal generating set of $I$, Koszul homology is independent, up to isomorphism, on the choice of $\by$ and in this case we also write $\HK_{\bigcdot}(I)=\HK_{\bigcdot}(\by)$. 

It is known, see \cite[Proposition 1.6.5(b)]{cohenMacaulay}, that the ideal $I$ annihilates $\HK_{\bigcdot}(\by)$. In particular, if $R/I$ is finite length, then $\HK_i(\by)$ is finite length for all $i$. 

\subsection{Koszul homology and generic three-dimensional hyperplane arrangements}

\begin{notation}\label{not:ellbr}
Let $\A = \bigcup_{i=1}^n H_i\subset \RR^3$ be a central hyperplane arrangement and $\br:\A \to \ZZ_{\ge 0}$ a smoothness distribution (c.f., \Cref{def:Constant_Along_Hyperplanes}).  We put $r_i=\br(H_i)$ and denote by $\bm{\ell}^{\br}$ the sequence $(\ell_{H_i}^{r_i+1}: 1\le i\le n)$ and by $\langle \bm{\ell}^{\br}\rangle$ the ideal generated by the $\bm{\ell}^{\br}$.
\end{notation}

Recall that a hyperplane arrangement $\A\subseteq\RR^3$ is called \textit{generic} if no three hyperplanes of $\A$ intersect along a line.   We now come 
to one of the main results of the paper that
shows a tight connection between $\calS^{\br}(\Sigma^\A)$ and Koszul homology.  In particular, we can now relate some of the homological groups that appear in Section
3 to the Koszul homology.

\begin{theorem}\label{thm.linkhomologoies}
Suppose $\A\subseteq\RR^3$ is an essential hyperplane arrangement and $\br:\A\to\ZZ_{\ge -1}$ is a smoothness distribution.  Then
\begin{enumerate}[label=({\arabic*})]
\item\label{thm.linkhomologoies-1} $\HH_1(\calJ^\br_\bullet)$ is a quotient of the first Koszul homology $\HK_1(\bm{\ell}^{\br})$  of $\bm{\ell}^{\br}$.
\item\label{thm.linkhomologoies-2} $\HH_1(\calJ^\br_\bullet)$ has finite length.
\item\label{thm.linkhomologoies-3} If $\A$ is generic, then $\HH_1(\calJ^\br_\bullet)\cong \HK_1(\bm{\ell}^{\br})$.
\end{enumerate}
\end{theorem}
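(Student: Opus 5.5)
The starting point is \Cref{lem:PruningH1Pres}, which gives $\HH_1(\calJ^\br_\bullet)\cong V^{\br}_\A/V^{1,\br}_\A$. By \eqref{eq.V_Ardefn} and \eqref{eq.koszulkernel}, $V^{\br}_\A$ is exactly the syzygy module $\ker(\partial_1)$ of the sequence $\bm{\ell}^{\br}$ in $\K_1(\bm{\ell}^\br)=\bigoplus_{H\in\A}Re_H$. So every statement reduces to comparing the submodule $V^{1,\br}_\A$ with the module $\mathrm{Im}(\partial_2)$ of Koszul syzygies.

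For \ref{thm.linkhomologoies-1} I will show $\mathrm{Im}(\partial_2)\subseteq V^{1,\br}_\A$. Then $V^\br_\A/V^{1,\br}_\A$ is a quotient of $V^\br_\A/\mathrm{Im}(\partial_2)=\HK_1(\bm{\ell}^\br)$. Take a Koszul syzygy $\ell_{H_i}^{r_i+1}e_j-\ell_{H_j}^{r_j+1}e_i$ with $i\ne j$. The planes $H_i\neq H_j$ meet in a line $W=H_i\cap H_j\in\calL_2(\A)$. This syzygy is supported on hyperplanes containing $W$ and is a relation, so it is one of the generators in \eqref{eq.V_Ar-1defn}.

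For \ref{thm.linkhomologoies-2}, the cited fact \cite[Proposition 1.6.5(b)]{cohenMacaulay} says $\langle\bm{\ell}^\br\rangle$ annihilates $\HK_1(\bm{\ell}^\br)$, hence also its quotient $\HH_1(\calJ^\br_\bullet)$. Since $\A$ is essential, the $\ell_H$ have no common real zero besides the origin, so $\sqrt{\langle\bm{\ell}^\br\rangle}=\langle x,y,z\rangle$. Therefore $R/\langle\bm{\ell}^\br\rangle$ has finite length, and so does the finitely generated module $\HH_1(\calJ^\br_\bullet)$.

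There is one caveat: if some $r_i=-1$, then $\ell_{H_i}^0=1$, the ideal is the unit ideal, and all Koszul homology vanishes. The statements still hold in that case.

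For \ref{thm.linkhomologoies-3} I need the reverse inclusion $V^{1,\br}_\A\subseteq\mathrm{Im}(\partial_2)$ when $\A$ is generic. Genericity means each $W\in\calL_2(\A)$ lies in exactly two hyperplanes $H_i,H_j$. A generator is then $f_ie_i+f_je_j$ with $f_i\ell_{H_i}^{r_i+1}+f_j\ell_{H_j}^{r_j+1}=0$. Since $\ell_{H_i},\ell_{H_j}$ are non-proportional linear forms, their powers are coprime in the UFD $R$, so the pair forms a regular sequence. Hence $\ell_{H_j}^{r_j+1}\mid f_i$, and writing $f_i=g\,\ell_{H_j}^{r_j+1}$ gives $f_j=-g\,\ell_{H_i}^{r_i+1}$. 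The generator is therefore $g$ times a Koszul syzygy. (If one exponent is $0$, the divisibility is trivial.) This shows $V^{1,\br}_\A=\mathrm{Im}(\partial_2)$, giving the isomorphism.

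The main point requiring care is the identification in \Cref{lem:PruningH1Pres} together with checking that the generators of $V^{1,\br}_\A$ really are just the two-term syzygies in the generic case; the remaining steps are routine.
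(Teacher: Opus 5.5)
Your proposal is correct and follows essentially the same route as the paper: you identify $V^{\br}_\A$ with the Koszul syzygy module, place the Koszul syzygies inside $V^{1,\br}_\A$ via the line $H_i\cap H_j$, use annihilation of Koszul homology by $\langle\bm{\ell}^{\br}\rangle$ for finite length, and use coprimality of powers of two non-proportional linear forms for the generic case. One small tightening in part (2): over $\RR$, \emph{no common real zero} does not by itself determine a radical (consider $x^2+y^2$), so justify $\sqrt{\langle\bm{\ell}^{\br}\rangle}=\langle x,y,z\rangle$ instead by noting that each $\ell_H$ lies in the radical and the $\ell_H$ span $R_1$; the paper does this by changing coordinates so that the ideal contains powers of $x$, $y$ and $z$.
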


\begin{proof}
We apply Koszul homology to the elements
of $\bm{\ell}^{\br}$.  In particular,
we have the following maps
$$  \cdots \longrightarrow \K_2(\bm{\ell}^{\br}) 
\stackrel{\partial_2}{\longrightarrow} \K_1(\bm{\ell}^{\br}) 
\stackrel{\partial_1}{\longrightarrow} 
I \longrightarrow 0.$$
We first prove \ref{thm.linkhomologoies-1}.
By \cref{lem:PruningH1Pres},
$\HH_1(\calJ^\br) = 
V_\A^\br/V_{\A}^{\br,1}$.
Since $\HK_1(\bm{\ell}^{\br}) = 
\ker(\partial_1)/\mbox{im}(\partial_2)$, it suffices to prove that  $\ker(\partial_1) = V_{\A}^\br$ and $\mbox{im}(\partial_2) \subseteq
V_{\A}^{\br,1}$.  By (\ref{eq.koszulkernel}), we have
$$\ker(\partial_1) =\Big\{\sum_{H \in \A} \alpha_He_H\in F\colon \sum_{H \in \A} \alpha_H\ell_H^{\br(H)+1}=0\Big\}.
$$
Note we are using $\A$ for our indexing
set of $\bm{\ell}^{\br}$ rather than $\{1,\ldots,n\}$.
But the set on the right is exactly how 
$V_\A^\br$ was defined in 
(\ref{eq.V_Ardefn}), thus giving 
$\ker(\partial_1)=V^\br_\A$.

The image $\mbox{im}(\partial_2)$ is generated by elements of the form 
$\ell_{H_2}^{\br(H_2)+1}e_{H_1} - 
\ell_{H_1}^{\br(H_1)+1}e_{H_2}$.  
Since $W=H_1\cap H_2\in \calL_2(\A)$, $\ell_{H_2}^{\br(H_2)+1}e_{H_1} - 
\ell_{H_1}^{\br(H_1)+1}e_{H_2}\in V_\A^{\br,1}$.  Thus $\mbox{im}(\partial_2)\subseteq V_\A^{\br,1}$.

For \ref{thm.linkhomologoies-2}, recall that $\langle \bm{\ell}^{\br}\rangle$ annihilates $\HK_1(\bm{\ell}^{\br})$.  Since $\A$ is essential, there are three linearly independent linear forms among $\{\ell_H: H\in\A\}$.  It follows that, up to a change of coordinates, we may assume that $\langle \bm{\ell}^{\br}\rangle$ has a power of $x$, a power of $y$, and a power of $z$ among its generators.  Hence a power of the maximal ideal $\langle x,y,z\rangle$ is contained in $\langle \bm{\ell}^{\br}\rangle$ and so annihilates $\HK_1(\bm{\ell}^{\br})$.  It follows that $\HK_1(\bm{\ell}^{\br})$ has finite length.

For \ref{thm.linkhomologoies-3}, suppose $\A$ is generic.  If $W\in\calL_2(\A)$ is a line, then (since $\A$ is generic) $W$ is contained in only two hyperplanes, say $H_1$ and $H_2$. By definition \cref{eq.V_Ar-1defn},
 we have
\begin{eqnarray*}
V_\A^{\br,1} &= &
\Big\{\sum_{H\supseteq W} f_He_H: W\in\calL_2(\A), \sum_{H\supseteq W} f_H{\ell}_H^{\br(H)+1}=0\Big\}, \text{\; hence} \\
& = &
\left\lbrace f_{H_1}e_{H_1}+f_{H_2}e_{H_2} : W\in\calL_2(\A) ~\mbox{and}~ H_1,H_2 \supseteq W, 
f_{H_1}{\ell}_{H_1}^{\br(H_1)+1}
+ f_{H_2}{\ell}_{H_2}^{\br(H_2)+1}=0\right\rbrace.
\end{eqnarray*}
Because $\ell_{H_1}$ and $\ell_{H_2}$ are linear forms,
$f_{H_1}{\ell}_{H_1}^{\br(H_1)+1}
+ f_{H_2}{\ell}_{H_2}^{\br(H_2)+1}=0$ if and only if 
$\ell_{H_2}^{\br(H_2)+1}\mid f_{H_1}$ and 
$\ell_{H_1}^{\br(H_1)+1}\mid f_{H_2}$. Using
this observation, it can be shown that as an $R$-module,
$V_\A^{\br,1}$ is generated by elements of
the form $\ell_{H_2}^{\br(H_2)+1}e_{H_1} - 
\ell_{H_1}^{\br(H_1)+1}e_{H_2}$.  But these elements
are also the generators of 
$\mbox{im}(\partial_2)$. Consequently, $\mbox{im}(\partial_2) =
V_{\A}^{\br,1}$, and so $\HH_1(\calJ^{\br})\cong \HK_1(\bm{\ell}^{\br})$.
\end{proof}

As a consequence of \Cref{thm.linkhomologoies} and \Cref{prop:EulerCharacteristicAndDimension}, we relate the dimension of spline spaces on the fan of a hyperplane arrangement to Koszul homology.

\begin{theorem}
\label{thm.SplineGenericArrangement}
Suppose $\A\subseteq\RR^3$ is an essential hyperplane arrangement and $\br:\A\to\ZZ_{\ge -1}$ is a smoothness distribution.  If $d\ge 0$, then
\[
\dim\calS^{\br}_d(\Sigma^\A)\le 2\dim R_d+ \sum_{\tau\in\Sigma^\A_2}\dim J(\tau)_d -\sum_{\gamma\in\Sigma_1^\A}\dim J(\gamma)_d+\dim \HK_1(\bm{\ell}^{\br})_d-\dim \HK_0(\bm{\ell}^{\br})_d.
\]
If $d\gg 0$, then
\[
\dim\calS^{\br}_d(\Sigma^\A)= 2\dim R_d+ \sum_{\tau\in\Sigma^\A_2}\dim J(\tau)_d -\sum_{\gamma\in\Sigma^\A_1}\dim J(\gamma)_d.
\]
If $\A$ is \textit{generic} and $d\ge 0$, then
\[
\dim\calS^{\br}_d(\Sigma^\A)=2\binom{d+2}{2}+2\sum_{\substack{H_1,H_2\in \A\\H_1\neq H_2}}\binom{d-\br(H_1)-\br(H_2)}{2}+\dim \HK_1(\bm{\ell}^{\br})_d-\dim \HK_0(\bm{\ell}^{\br})_d.
\]
\end{theorem}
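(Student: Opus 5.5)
The plan is to start from the exact formula of \Cref{prop:EulerCharacteristicAndDimension} and substitute the information about $\HH_1(\calJ_\bullet)$ and $J(\cv)$ supplied by \Cref{thm.linkhomologoies}. The first observation is that $J(\cv)=\sum_{\tau}J(\tau)=\langle \bm{\ell}^{\br}\rangle$, since every $\ell_H$ appears as $\ell_\tau$ for some $\tau\in\Sigma_2$ (with $\br$ constant along hyperplanes). Hence $R/J(\cv)=\HK_0(\bm{\ell}^{\br})$, and the term $-\dim(R/J(\cv))_d$ becomes $-\dim\HK_0(\bm{\ell}^{\br})_d$. The sum in \Cref{prop:EulerCharacteristicAndDimension} runs over all of $\Sigma_2,\Sigma_1$; since $\Sigma^\A$ is complete, every face is interior, so these sums agree with the interior sums in the complex.

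For the inequality, I will use \Cref{thm.linkhomologoies}\ref{thm.linkhomologoies-1}: $\HH_1(\calJ_\bullet)$ is a graded quotient of $\HK_1(\bm{\ell}^{\br})$. This holds because the surjection $V^\br_\A/\mathrm{im}\,\partial_2\to V^\br_\A/V^{1,\br}_\A$ is graded, as $\br$ is constant along hyperplanes and the basis symbols are given compatible degrees. It follows that $\dim \HH_1(\calJ_\bullet)_d\le\dim\HK_1(\bm{\ell}^{\br})_d$, which gives the first display. For $d\gg0$, \Cref{thm.linkhomologoies}\ref{thm.linkhomologoies-2} says $\HH_1(\calJ_\bullet)$ has finite length, so it vanishes in large degree. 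Moreover $R/\langle\bm{\ell}^{\br}\rangle$ has finite length by essentiality, as shown in the proof of part (2), so $\HK_0$ also vanishes for $d\gg0$. Substituting both vanishings into the exact formula yields the second display.

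For the generic case, \Cref{thm.linkhomologoies}\ref{thm.linkhomologoies-3} gives equality $\HH_1\cong\HK_1$, so what remains is to evaluate $\sum_\tau\dim J(\tau)_d-\sum_\gamma\dim J(\gamma)_d$ combinatorially. For $\tau\in\Sigma_2$, $\dim J(\tau)_d=\binom{d-r_\tau+1}{2}$, writing $\binom{d+2}{2}-\dim(R/J(\tau))_d$. In a generic arrangement each ray $\gamma$ lies on exactly two hyperplanes $H_1,H_2$, so $J(\gamma)=\langle \ell_1^{a},\ell_2^{b}\rangle$ with $a=r_1+1$, $b=r_2+1$. This is a complete intersection, so $\dim J(\gamma)_d=\dim R(-a)_d+\dim R(-b)_d-\dim R(-a-b)_d$. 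Next I count faces. Each line $H_1\cap H_2$ gives two rays. A hyperplane $H$ meets the other $n-1$ hyperplanes in $n-1$ distinct lines, so it is cut into $2(n-1)$ two-dimensional cones, and each ray lies in exactly $4$ such cones, two in each of its hyperplanes.

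Summing, the $\dim R(-a)_d$ contributions from the $\tau$'s total $2(n-1)$ per hyperplane. These cancel against the contributions from the rays, since each hyperplane $H$ appears in $2(n-1)$ ray-ideals as a generator. That leaves $+\sum_{\gamma}\dim R(-a-b)_d=2\sum_{\{H_1,H_2\}}\binom{d-r_1-r_2}{2}$, using $\dim R_{d-r_1-r_2-2}=\binom{d-r_1-r_2}{2}$. The sum in the statement is presumably over unordered pairs, or else the factor must be interpreted accordingly. I will check this bookkeeping carefully, including the convention $\binom{A}{2}=0$ for $A<2$ and the case $\br=-1$, where $J(\tau)=R$ and the identities still hold. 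This counting step is the main place where errors can creep in, though it is routine.
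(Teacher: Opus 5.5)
Your proposal is correct and matches the paper's proof essentially step for step. The inequality and the large-$d$ equality come from \Cref{prop:EulerCharacteristicAndDimension}, parts (1) and (2) of \Cref{thm.linkhomologoies}, and the identification $R/J(\cv)=\HK_0(\bm{\ell}^{\br})$. The generic formula comes from part (3) plus the same cancellation of the terms $\binom{d+1-\br(H)}{2}$; your count of $2(n-1)$ cones and $2(n-1)$ rays in each hyperplane is exactly the paper's bijection. Your reading of the final sum as running over unordered pairs $\{H_1,H_2\}$ is the intended one (compare the factor $2\binom{n}{2}$ in later results).
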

\begin{proof}
The first inequality is immediate from \Cref{prop:EulerCharacteristicAndDimension}, \Cref{thm.linkhomologoies}\ref{thm.linkhomologoies-1},
and the fact that $R/J(\cv)=\HK_0(\bm{\ell}^{\br})$. The equality for $d\gg 0$ follows from \Cref{prop:EulerCharacteristicAndDimension} since $\HK_1(\ell^r)$ has finite length (\Cref{thm.linkhomologoies}\ref{thm.linkhomologoies-2}) and $R/J(\cv)$ has finite length.

For the final equality, suppose $\A$ is generic.  By \Cref{thm.linkhomologoies}\ref{thm.linkhomologoies-3}, $\HK_1(\bm{\ell}^{\br})\cong \HH_1(\calJ^\br_\bullet)$.  
Thus by \Cref{prop:EulerCharacteristicAndDimension}, the inequality
given in the previous paragraph is an equality in
the case that $\A$ is generic.   To complete the proof it
suffices to show that
\[
2\dim R_d+ \sum_{\tau\in\Sigma_2^\A}\dim J(\tau)_d -\sum_{\gamma\in\Sigma_1^\A}\dim J(\gamma)_d=2\binom{d+2}{2}+2\sum_{\substack{H_1,H_2\in \A\\H_1\neq H_2}}\binom{d-\br(H_1)-\br(H_2)}{2}.
\]
Clearly $2\dim R_d=2\binom{d+2}{2}$.  We show that 
\begin{equation}\label{eq:genericCancel}
\sum_{\tau\in\Sigma_2^\A}\dim J(\tau)_d -\sum_{\gamma\in\Sigma_1^\A}\dim J(\gamma)_d=2\sum_{\substack{H_1,H_2\in \A\\H_1\neq H_2}}\binom{d-\br(H_1)-\br(H_2)}{2}.
\end{equation}
To prove \cref{eq:genericCancel}, observe that if $\tau\in \Sigma^\A_2$ is contained in a hyperplane $H$, then $J(\tau)=\langle \ell_H^{\br(H)+1}\rangle$ and so 
\[
\dim J(\tau)_d=\binom{d+1-\br(H_1)}{2}.
\]
Since $\A$ is generic, every ray $\gamma\in \Sigma^\A_1$ is contained in the intersection of a unique pair of hyperplanes $H_1,H_2\in \A$.  It follows that $J(\gamma)=\langle \ell^{\br(H_1)+1}_{H_1},\ell^{\br(H_2)+1}_{H_2}\rangle$ and so
\[
\dim J(\gamma)_d = \binom{d+1-\br(H_1)}{2}+\binom{d+1-\br(H_2)}{2}-\binom{d-\br(H_1)-\br(H_2)}{2}.
\]
The term $\binom{d+1-\br(H)}{2}$ appears once in the formula for $\dim J(\tau)_d$ for every $\tau\in\Sigma^\A_2$ with $\tau\subseteq H$ and once in the formula for $\dim J(\gamma)_d$ for every $\gamma\in\Sigma^\A_1$ with $\gamma\subseteq H$.  Clearly there is a bijection between the number of two dimensional cones $\tau\in\Sigma^\A_2$ contained in $H$ and the number of rays $\gamma\in\Sigma^\A_2$ contained in $H$.  So the terms of the form $\binom{d+1-\br(H)}{2}$ all cancel on the left-hand side of \cref{eq:genericCancel}, leaving just the terms $\binom{d-\br(H_1)-\br(H_2)}{2}$ for each ray $\gamma\in\Sigma^\A_1$.  For each pair of hyperplanes $H_1,H_2\in\A$ there are two rays in $H_1\cap H_2$, leading to the coefficient of $2$ on the right hand side of \cref{eq:genericCancel}.  This completes the proof.
\end{proof}

\begin{remark}
\label{rem:HilbertPolynomial}
Recall from \Cref{ss:HilbertPoly}
that the {\it Hilbert polynomial} of an $R$-module 
$M$ is a polynomial $\HP_M(d)$ such that $\HP_M(d) = \dim M_d$
for all $d \gg 0$, that is, it agrees with the Hilbert function
of $M$ for almost all integers $d$.
For an essential arrangement $\A\subset\RR^3$, \Cref{thm.SplineGenericArrangement} shows that, for $d \gg 0$, the value of $\dim\calS^{\br}_d(\Sigma^\A)$ 
only relies on the first three terms since both $\HK_1(\bm{\ell}^{\br})$
and $\HK_0(\bm{\ell}^{\br})$ have finite length.  In particular, if $\A$ is generic,
we can determine the Hilbert polynomial of $\calS^\br(\Sigma)$
directly from the smoothness distribution of $\br$:
\[
\HP_{\calS^{\br}(\Sigma^\A)}(d) = (d+2)(d+1) 
+ \sum_{\substack{H_1,H_2\in \A\\H_1\neq H_2}}
(d- \br(H_1)-\br(H_2))(d-\br(H_1)-\br(H_2)-1).
\]
If $\A$ is not generic, it is still possible to determine the Hilbert polynomial of $\calS^\br(\Sigma)$ directly from $\br$ and the intersection lattice of $\A$ -- i.e. the containments between lines and planes.  The main technical hurdle is to compute the Hilbert polynomial of the ideal $J(\gamma)$, where $\gamma\in\Sigma^\A_1$.  This is done by Geramita-Schenck in~\cite{FatPoints}; we omit the details but see 
Example \ref{ex:A3HP} for an illustration of the computation.
\end{remark}

\section{Dimension computations in high degree via regularity}
\label{sec.regularity}

Suppose $\A$ is a hyperplane arrangement with at least three linearly independent hyperplanes.  If $d$ is sufficiently large, the dimension of the spline space $\calS^\br_d(\Sigma^\A)$ coincides with the \textit{Hilbert polynomial} $\HP_{\calS^\br(\Sigma^\A)}(d)$, as in \Cref{rem:HilbertPolynomial}.  According to \cref{thm.SplineGenericArrangement}, the Hilbert polynomial of $\calS^\br(\Sigma^\A)$ is determined by \textit{local data}, namely the Hilbert polynomials of the ideals $\{J(\tau):\tau\in\Sigma^\A_2\}$ and $\{J(\gamma):\gamma\in\Sigma^\A_1\}$.

The values of $d$ for which $\dim\calS^\br_d(\Sigma^\A)$ is given by $\HP_{\calS^\br(\Sigma^\A)}(d)$ can be bounded by the maximum degree in which the Koszul homologies $\HK_0(\bm{\ell}^\br)$ and $\HK_1(\bm{\ell}^\br)$ are non-zero.  This is determined by the (Castelnuovo-Mumford) regularity of the two modules.  Recall that $\HK_0(\bm{\ell}^\br)=R/I$, where $I=\langle \bm{\ell}^\br\rangle=\langle \ell_H^{\br(H)+1}:H\in\A\rangle$.  In this section, we first provide a bound on the regularity of $\HK_1(\bm{\ell}^\br)$ using the regularity of $I$, and then we provide several bounds on the regularity of $I$.  Putting these together, we derive bounds in \Cref{maintheoremUsingRegularity} on the values of $d$ for which $\dim\calS^\br_d(\Sigma^\A)$ is given by $\HP_{\calS^\br(\Sigma^\A)}(d)$.

If $M$ is a graded $R$-module, we use $\reg(M)$ to denote the regularity of $M$. We refer to standard texts (e.g. \cite{cohenMacaulay,CommAlg,Syz}) for the definition of regularity; for our purposes we need the following properties.

\begin{lemma} 
\label{lemma:propertiesOfRegularity}\cite[Section 4B]{Syz}
Let $R$ be a graded ring.
 \begin{enumerate}[label=({\arabic*})]
  \item If $M$ is an finite length $R$-module, then $\reg(M) = \max \{t \: : M_t\neq 0 \}$. 
  \item $\reg(A \oplus B) = \max(\reg(A),\reg(B))$.
  \item If $I \subset R$ is a homogeneous ideal, then $\reg(I) = \reg(R/I) + 1$.
\end{enumerate}
\end{lemma}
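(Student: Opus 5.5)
All three items are standard, and I would prove them from the Betti-number description of regularity,
\[
\reg(M)=\max\{\,j-i : \beta_{i,j}(M)\neq 0\,\},
\]
together with its local cohomology counterpart
\[
\reg(M)=\max\{\,a_i(M)+i\,\},\qquad a_i(M)=\max\{t : \HH^i_{\mathfrak m}(M)_t\neq 0\},
\]
where $\mathfrak m=\langle x_1,\ldots,x_m\rangle$. I take the equivalence of these two descriptions as given from \cite[Theorem~4.3]{Syz}.

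For (1), I would use the local cohomology description. If $M$ has finite length, then $M$ is $\mathfrak m$-torsion, so $\HH^0_{\mathfrak m}(M)=M$. Moreover $\dim M=0$, so Grothendieck vanishing gives $\HH^i_{\mathfrak m}(M)=0$ for all $i>0$. Hence $\reg(M)=a_0(M)=\max\{t : M_t\neq 0\}$.

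For (2), I would take minimal graded free resolutions $F_\bullet\to A$ and $G_\bullet\to B$. The direct sum $F_\bullet\oplus G_\bullet$ is a graded free resolution of $A\oplus B$, and it is still minimal, because its differentials have entries in $\mathfrak m$. Therefore $\beta_{i,j}(A\oplus B)=\beta_{i,j}(A)+\beta_{i,j}(B)$. Taking the maximum of $j-i$ over the nonzero Betti numbers then gives $\reg(A\oplus B)=\max(\reg(A),\reg(B))$.

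For (3), I would start from the short exact sequence $0\to I\to R\to R/I\to 0$ with $I$ a proper nonzero homogeneous ideal. Let $F_\bullet\to I$ be a minimal free resolution. Splicing it with $R\to R/I$ gives a resolution $R\leftarrow F_0\leftarrow F_1\leftarrow\cdots$ of $R/I$. This spliced resolution is minimal, since the map $F_0\to R$ sends the generators of $F_0$ to the minimal generators of $I$, and these lie in $\mathfrak m$. Reading off Betti numbers gives $\beta_{0,0}(R/I)=1$ and $\beta_{i,j}(R/I)=\beta_{i-1,j}(I)$ for $i\ge 1$. Consequently
\[
\reg(R/I)=\max\{0,\ \reg(I)-1\}.
\]

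The only point needing care, and the one I expect to be the main obstacle, is the removal of the $0$ in this maximum. Since $I$ is proper and nonzero, all of its minimal generators have degree at least $1$. This gives $\beta_{0,j}(I)\neq 0$ for some $j\ge 1$, so $\reg(I)\ge 1$, and therefore $\reg(R/I)=\reg(I)-1$. The degenerate cases $I=0$ and $I=R$ are excluded by convention; in this paper $I=\langle\bm{\ell}^\br\rangle$ is always proper and nonzero, so this costs nothing.
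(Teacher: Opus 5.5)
Your proof is correct and follows the standard route behind the paper's citation of \cite[Section~4B]{Syz}; the paper gives no argument of its own. Item (1) comes from the local cohomology characterization of regularity, where Grothendieck vanishing leaves only $\HH^0_{\mathfrak m}(M)=M$, and items (2) and (3) are read off from minimal graded free resolutions. Your caveats are genuine sharpenings of the statement as written: (3) needs $I$ proper and nonzero, and the Betti-number description assumes $R$ is the polynomial ring. They cost nothing here, since the paper only applies the lemma to $I=\langle\bm{\ell}^{\br}\rangle$ in $R=\RR[x,y,z]$.
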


\subsection{Bounding the regularity of the first Koszul homology}

The following result allows us to compute the Koszul homology of the sequence $\boldsymbol{\ell}^{\br}$ in terms of the Koszul homology of a minimal generating set for the ideal generated by the sequence. 
It follows from a direct adaptation of the proof of \cite[Proposition 1.6.21]{cohenMacaulay} to a graded setting.

\begin{proposition}
    \label{prop: first Koszul homology nonminimal}
    Let $R$ be a graded ring and $\by=(y_1, \dots, y_n)$ a sequence in $R$ with $\deg(y_i)=r_i+1$ for all $i\in [n]$.  Suppose that $\boldsymbol{y}'=
    (
    y_1,\ldots, y_p
    )$
    where $\langle \boldsymbol{y}\rangle=\langle \boldsymbol{y'}\rangle$ and $p\le n$.
    Then there is an isomorphism of graded modules 
    \begin{equation}
        \label{eq: nonminimal first Koszul homology} 
        \HK_1(\boldsymbol{y})
        \cong
        \HK_1(\boldsymbol{y}')
        \oplus
        \bigoplus_{i = p+1}^n 
        \frac{R}{\langle \boldsymbol{y}' \rangle}
        (-r_i - 1). 
    \end{equation}
\end{proposition}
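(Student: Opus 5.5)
The plan is induction on $n-p$, so it suffices to handle adding one redundant element. Suppose $\by=(\by'',y_n)$ with $y_n\in\langle\by''\rangle$, and put $I=\langle\by''\rangle=\langle\by\rangle$. The key is the standard short exact sequence of complexes relating $\K_\bigcdot(\by)$ to $\K_\bigcdot(\by'')$: since $\K_\bigcdot(\by)\cong \K_\bigcdot(\by'')\otimes_R \K_\bigcdot(y_n)$, there is an exact sequence
\[
0\to \K_\bigcdot(\by'')\to \K_\bigcdot(\by)\to \K_\bigcdot(\by'')[-1](-r_n-1)\to 0.
\]
Its long exact homology sequence has connecting map equal to multiplication by $\pm y_n$ on $\HK_\bigcdot(\by'')$. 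Because $y_n\in I$ and $I$ annihilates Koszul homology (\cite[Proposition 1.6.5(b)]{cohenMacaulay}), these connecting maps vanish. So we get short exact sequences of graded modules
\[
0\to \HK_1(\by'')\to \HK_1(\by)\to \HK_0(\by'')(-r_n-1)\to 0,
\]
with $\HK_0(\by'')=R/I$. All the maps here are degree-preserving once the shift $(-r_n-1)$ is recorded, and that shift comes from $\deg y_n=r_n+1$.

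The main step is splitting this sequence. Write $y_n=\sum_{i<n} a_i y_i$ with $a_i$ homogeneous of degree $r_n-r_i$, which is possible since $y_n$ is homogeneous and lies in $I$. Consider the element $w=e_n-\sum_{i<n}a_ie_i\in \K_1(\by)$. It is homogeneous of degree $r_n+1$ and is a cycle, and it maps to the class of $1$ in $\HK_0(\by'')(-r_n-1)$. The map $R\to\HK_1(\by)$, $1\mapsto [w]$, is annihilated by $I$, again because $I$ kills $\HK_1(\by)$. It therefore descends to a graded section $R/I(-r_n-1)\to \HK_1(\by)$, and the sequence splits. This gives $\HK_1(\by)\cong\HK_1(\by'')\oplus R/I(-r_n-1)$.

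Iterating from $\by'$ up to $\by$ by adjoining $y_{p+1},\dots,y_n$ one at a time yields \eqref{eq: nonminimal first Koszul homology}. Each intermediate ideal equals $\langle\by'\rangle$, so each new summand is $R/\langle\by'\rangle(-r_i-1)$. The one point to check with care is the identification of the connecting homomorphism with multiplication by $\pm y_n$, together with the grading conventions in the mapping-cone sequence. The splitting itself only uses the annihilation property.
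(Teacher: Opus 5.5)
Your proof is correct, but it takes a different route from the one the paper points to. The paper gives no argument beyond invoking a graded adaptation of \cite[Proposition 1.6.21]{cohenMacaulay}, and that argument is a change of basis. For $i>p$, write $y_i=\sum_{j\le p}a_{ij}y_j$ with $a_{ij}$ homogeneous of degree $r_i-r_j$. The graded automorphism of $F$ fixing $e_1,\dots,e_p$ and sending $e_i\mapsto e_i-\sum_{j\le p}a_{ij}e_j$ for $i>p$ identifies $\K_{\bigcdot}(\by)$ with the Koszul complex of $(y_1,\dots,y_p,0,\dots,0)$. That complex is $\K_{\bigcdot}(\by')\otimes_R\bigwedge^{\bullet}G$, where $G=\bigoplus_{i>p}R(-r_i-1)$ carries the zero differential. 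Hence $\HK_k(\by)\cong\bigoplus_{j}\HK_{k-j}(\by')\otimes_R\bigwedge^{j}G$ for all $k$, and $k=1$ is the displayed isomorphism.

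You instead adjoin one redundant generator at a time, use the long exact sequence coming from $\K_{\bigcdot}(\by)\cong\K_{\bigcdot}(\by'')\otimes_R\K_{\bigcdot}(y_n)$, kill the connecting maps because $\langle\by''\rangle$ annihilates $\HK_{\bigcdot}(\by'')$, and then split by hand. Your cycle $w=e_n-\sum_{i<n}a_ie_i$ is exactly the image of $e_n$ under that change of basis, so the core idea is the same.

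The difference is in what each route needs and gives. The change-of-basis proof splits already at the level of complexes, so it needs neither the identification of the connecting homomorphism (with its sign and grading bookkeeping) nor the annihilation property, and it gives every $\HK_k$ at once. Your version works only with $\HK_1$ and $\HK_0$, and uses the annihilation of Koszul homology twice: once to kill the connecting map and once to make the section descend to $R/\langle\by'\rangle$.
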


Formulas for the regularity of Koszul homologies of ideals were studied in \cite{Bruns-Conca-Romer-2011b, Bruns-Conca-Romer-2011,Conca-Murai-2015}.  We use \cref{prop: first Koszul homology nonminimal} and \cite[Proposition~3.3]{Bruns-Conca-Romer-2011b} to deduce a bound on the regularity of $\HK_1(\by)$.

\begin{proposition}
\label{regularityBoundH1}
Let $R$ be a polynomial ring with standard grading.  Let $\by=(y_1, \dots, y_n)$ be a sequence in $R$ with $\deg(y_i)=r_i+1$ for all $i\in [n]$, set $I=\langle \by\rangle$, and assume that $R/I$ has finite length.  Suppose that $\boldsymbol{y}'=
    (
    y_1,\ldots, y_p
    )$
    where $\langle \boldsymbol{y}\rangle$ is minimally generated by $\boldsymbol{y'}$.

If $n=p$ then $\reg(\HK_1(\by))\le 2\reg(I)$.  If $n>p$, assume that $r_{p+1}\le \cdots \le r_n$.  Then,
$$\reg(\HK_1(\by)) \leq \reg(I) + \max \left \{\reg(I), r_n\right \}.$$
\end{proposition}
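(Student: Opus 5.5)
The plan is to reduce to the minimal generating sequence via \Cref{prop: first Koszul homology nonminimal}, and then bound the regularity of the minimal piece using the result of Bruns--Conca--R\"omer \cite[Proposition~3.3]{Bruns-Conca-Romer-2011b}.

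\textbf{Minimal case $n=p$.} Here $\by$ minimally generates $I$. Since $R/I$ has finite length, every Koszul homology module $\HK_i(\by)$ is annihilated by $I$ and therefore has finite length. By \Cref{lemma:propertiesOfRegularity}(1), $\reg(\HK_1(\by))$ is then simply the top nonvanishing degree of $\HK_1(\by)$. I would invoke \cite[Proposition~3.3]{Bruns-Conca-Romer-2011b}, which for ideals with $\dim R/I=0$ bounds the regularity of Koszul homology: $\reg(\HK_i(I))\le \reg(I)+\reg(\HK_{i-1}(I))$, or more directly $\reg(\HK_i(I))\le (i+1)\reg(I)$ in low homological degree. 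For $i=1$ this gives $\reg(\HK_1(\by))\le 2\reg(I)$.

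As a fallback, if the cited statement is phrased differently, I would argue directly. The module $\HK_1$ is a subquotient of $\ker(\partial_1)=\syz(I)$. The syzygy module $\syz(I)$ is the first syzygy of $I$ in the minimal resolution (since $\by$ is minimal), so $\reg(\syz(I))\le\reg(I)+1$, and $\syz(I)$ is generated in degrees $\le \reg(I)+1$. The Koszul homology is killed by $I$, and $\mathfrak m^{\reg(I)}\subseteq I$ up to saturation in the finite-length case, because $R/I$ vanishes in degrees $\ge\reg(I)$ by \Cref{lemma:propertiesOfRegularity}(3). Hence each generator's class vanishes after multiplying by $\mathfrak m^{\reg(I)}$, which caps $\HK_1$ at degree $\le \reg(I)+1+\reg(I)-1=2\reg(I)$.

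\textbf{Nonminimal case $n>p$.} \Cref{prop: first Koszul homology nonminimal} gives
\[
\HK_1(\by)\cong\HK_1(\by')\oplus\bigoplus_{i=p+1}^n \frac{R}{I}(-r_i-1).
\]
By \Cref{lemma:propertiesOfRegularity}(2), the regularity of the left side is the maximum of the regularities of the summands. The first summand has regularity at most $2\reg(I)$ by the minimal case. For each shifted copy, \Cref{lemma:propertiesOfRegularity}(3) gives
\[
\reg\!\left(\frac{R}{I}(-r_i-1)\right)=\reg(R/I)+r_i+1=\reg(I)+r_i,
\]
and this is largest at $r_n$ by the ordering assumption. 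Combining,
\[
\reg(\HK_1(\by))\le\max\{2\reg(I),\ \reg(I)+r_n\}=\reg(I)+\max\{\reg(I),r_n\}.
\]

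\textbf{Main obstacle.} The real content is the minimal-case bound $2\reg(I)$, which depends on quoting the correct form of \cite[Proposition~3.3]{Bruns-Conca-Romer-2011b}. The direct fallback argument also needs care: the claim that $\mathfrak m^{\reg(I)}$ kills $\HK_1$ must be checked against the grading and the exact degrees of the syzygy generators.
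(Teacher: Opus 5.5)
Your main argument is the paper's own: quote Bruns--Conca--R\"omer for $\reg(\HK_1(\by'))\le 2\reg(I)$, then take the maximum over the summands of \Cref{prop: first Koszul homology nonminimal} using $\reg\bigl((R/I)(-r_i-1)\bigr)=\reg(I)+r_i$.

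Drop the first form you attribute to that citation, $\reg(\HK_1)\le\reg(I)+\reg(\HK_0)=2\reg(I)-1$: it is false, since for $I=\langle x,y,z\rangle^2$ one has $\HK_1(I)_4\neq 0$ while $2\reg(I)=4$. Your fallback, on the other hand, is a complete elementary proof: $\mathfrak m^{\reg(I)}\subseteq I$ holds outright because $R/I$ has finite length. Run on the full non-minimal syzygy module, which is generated in degrees $\le\max\{\reg(I)+1,r_n+1\}$, it gives both cases at once without any citation.
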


\begin{proof}
Since $R/I$ has finite length, \cite[Proposition~3.3]{Bruns-Conca-Romer-2011b} gives $\reg(\HK_1(\by'))\le 2\reg(I)$.  If $n=p$, we are done.  Otherwise, using also \Cref{prop: first Koszul homology nonminimal} and \Cref{lemma:propertiesOfRegularity}, we have
\begin{align*}
\reg(\HK_1(\by)) = & \reg\bigg(\HK_1(\by') \oplus \bigoplus_{i = p+1}^n(R/I)(-r_i - 1)\bigg)= \max \{\reg(\HK_1(\by')),\reg(R/I) +r_n + 1)\} \\ = & \reg(I) + \max \{\reg(\HK_1(\by')) - \reg(I),r_n\}
\le \reg(I)+\max \{\reg(I),r_n\}. 
\end{align*}
\end{proof}

As the next result shows, we can bound when
the Hilbert function and the Hilbert polynomial of 
$\calS^{\br}(\Sigma^A)$ agree in terms of the regularity
of the ideal  
$I=\langle \bm{\ell}^\br\rangle=\langle \ell_H^{\br(H)+1}:H\in\A\rangle$.
The following theorem can be deduced from the regularity bound in \cref{regularityBoundH1} and the dimension formula in \cref{thm.SplineGenericArrangement}.
\begin{theorem}
\label{maintheoremUsingRegularity1}
Suppose $\A=\{H_1,\ldots,H_n\}$ is an essential hyperplane arrangement in $\RR^3$, $\br\colon\A\to\ZZ_{\ge -1}$ is a smoothness distribution, and put $\br(H_i)=r_i$.  Assume that the ideal $I = \langle \bm{\ell}^{\br}\rangle$ is minimally generated by $\ell_1^{r_1 + 1},\dots,\ell_p^{r_p + 1}$.  Suppose that either $n=p$ and $d>2\reg(I)$ or $n>p$, $r_{p+1}\le \cdots \le r_n$, and $d > \reg(I) + \max \{\reg(I),r_n\}$.
Then
\[
\dim\calS^{\br}_d(\Sigma^\A)= 2\dim R_d+ \sum_{\tau\in\Sigma^\A_2}\dim J(\tau)_d -\sum_{\gamma\in\Sigma^\A_1}\dim J(\gamma)_d.
\]
If $\A$ is \textit{generic}, then 
\[
\dim\calS^{\br}_d(\Sigma^\A)=2\binom{d+2}{2}+2\sum_{\substack{H_1,H_2\in \A\\H_1\neq H_2}}\binom{d-\br(H_1)-\br(H_2)}{2}.
\]
\end{theorem}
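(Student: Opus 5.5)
The plan is to combine the exact formula in \Cref{prop:EulerCharacteristicAndDimension} with the vanishing of the two correction terms in high degree. By \Cref{prop:EulerCharacteristicAndDimension},
\[
\dim \calS^\br_d(\Sigma^\A) = 2\dim R_d+ \sum_{\tau\in\Sigma^\A_2}\dim J(\tau)_d -\sum_{\gamma\in\Sigma^\A_1}\dim J(\gamma)_d-\dim (R/J(\cv))_d+\dim \HH_1(\calJ_\bullet)_d .
\]
It therefore suffices to show that both $(R/J(\cv))_d$ and $\HH_1(\calJ_\bullet)_d$ vanish for $d$ in the stated range. Here $J(\cv)=I$ and $R/I=\HK_0(\bm{\ell}^\br)$.

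First, $R/I$ has finite length because $\A$ is essential (argued as in \Cref{thm.linkhomologoies}\ref{thm.linkhomologoies-2}). So by \Cref{lemma:propertiesOfRegularity}, $(R/I)_d=0$ for $d>\reg(R/I)=\reg(I)-1$. In either hypothesis $d>2\reg(I)$ or $d>\reg(I)+\max\{\reg(I),r_n\}$, and both imply $d>\reg(I)-1$, since $\reg(I)\ge 1$ (the ideal is generated in degrees $\ge 1$, assuming all $r_i\ge 0$; if some $r_i=-1$ then $I=R$ and the term vanishes identically). Second, by \Cref{thm.linkhomologoies}\ref{thm.linkhomologoies-1}, $\HH_1(\calJ_\bullet)$ is a graded quotient of $\HK_1(\bm{\ell}^\br)$. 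Hence $\dim\HH_1(\calJ_\bullet)_d\le \dim \HK_1(\bm{\ell}^\br)_d$, and it suffices that $\HK_1(\bm{\ell}^\br)_d=0$.

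Since $\HK_1(\bm{\ell}^\br)$ has finite length (annihilated by $I$), \Cref{lemma:propertiesOfRegularity}(1) says it vanishes in degrees above its regularity. \Cref{regularityBoundH1}, applied with $\by=\bm{\ell}^\br$ ordered so that the first $p$ terms minimally generate $I$, bounds that regularity by $2\reg(I)$ when $n=p$ and by $\reg(I)+\max\{\reg(I),r_n\}$ when $n>p$. These are exactly the hypotheses on $d$. The one point to check is that \Cref{regularityBoundH1} concerns the Koszul homology of the full sequence $\bm{\ell}^\br$, not of the minimal generators; this matches \Cref{thm.linkhomologoies}, which uses the full sequence. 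This gives the first displayed formula.

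For the generic case, I would substitute identity \eqref{eq:genericCancel} from the proof of \Cref{thm.SplineGenericArrangement}. That identity converts $\sum_\tau\dim J(\tau)_d-\sum_\gamma \dim J(\gamma)_d$ into $2\sum\binom{d-\br(H_1)-\br(H_2)}{2}$, and $2\dim R_d=2\binom{d+2}{2}$. No real obstacle is expected. The only mild subtlety is the bookkeeping that the degree bounds dominate $\reg(R/I)$, together with handling a possible smoothness value $-1$, where generators are units and everything trivializes.
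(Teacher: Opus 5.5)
Your proposal is correct and follows essentially the paper's route. The paper only says the result follows from \Cref{regularityBoundH1} together with \Cref{thm.SplineGenericArrangement}, which in turn rests on \Cref{prop:EulerCharacteristicAndDimension} and \Cref{thm.linkhomologoies}; these are exactly the ingredients you combine. Your choice to work from the exact Euler-characteristic formula and the graded surjection $\HK_1(\bm{\ell}^\br)\to \HH_1(\calJ_\bullet)$ is the cleanest way to get equality, since the first part of \Cref{thm.SplineGenericArrangement} states only an upper bound.
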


\subsection{When does the dimension of the spline space become polynomial?}

As just shown in  \Cref{maintheoremUsingRegularity1},
we can bound the degrees $d$ for which $\dim\calS^\br_d(\Sigma^\A)$ is given by its Hilbert polynomial $\HP_{\calS^\br(\Sigma^\A)}(d)$ in
terms of the regularity of the ideal
$I=\langle \bm{\ell}^\br\rangle=\langle \ell_H^{\br(H)+1}:H\in\A\rangle$.  We further
refine this result in this section by showing how
 either the smoothness function ${\bf r}$ or 
geometric information related to $I$ can  give bounds
on ${\rm reg}(I)$.

Throughout the rest of the section (and also in \Cref{sec:smallnumber}), we use that the dual point of the linear form $\ell_i = \alpha_0\, x + \alpha_1\, y + \alpha_2\, z$ is $P_i = [\alpha_0:\alpha_1:\alpha_2] \in \PP^2 = \PP(\R^3)$.

\begin{proposition}\label{prop:regularityBoundsPowerIdeals}
Suppose $\A$ is an essential hyperplane arrangement in $\RR^3$ of $n$ hyperplanes and $\br\colon\A\to\ZZ_{\ge -1}$ is a smoothness distribution. Let $I = \langle \bm{\ell}^{\br}\rangle$.
 \begin{enumerate}[label=({\arabic*})]
\item Let $i,j,k$ be indices so that $\ell_i,\ell_j,\ell_k$ are linearly independent.  Then 
$
\reg(R/I)\le r_i+r_j+r_k.
$
\item Suppose there are indices $1\le i\le j \le s\le t\le n$ so that every proper subset of $\{\ell_i,\ell_j,\ell_s,\ell_t\}$ is linearly independent.  Then
$
\reg(R/I)\le \left\lfloor\dfrac{r_i+r_j+r_s+r_t}{2}\right\rfloor.
$
\item Suppose $\br$ is constant, so $r_i=r$ for all $1\le i\le n$ for some integer $r$.  Then
$
\reg(R/I)\le r\left(\dfrac{\alpha+1}{\alpha-1}\right),
$
where $\alpha$ is the smallest degree of a non-zero homogeneous polynomial vanishing on the dual set of points $P_1,\ldots,P_n$.
\end{enumerate}
\end{proposition}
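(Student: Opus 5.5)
The plan is to use two facts throughout. First, regularity of an Artinian quotient is its top nonzero degree (\Cref{lemma:propertiesOfRegularity}). Second, enlarging an ideal can only lower the top nonzero degree of an Artinian quotient: if $I'\subseteq I$ and $R/I'$ has finite length, then $R/I$ is a quotient of $R/I'$, so $\reg(R/I)\le\reg(R/I')$. In each part we therefore choose a convenient subideal $I'\subseteq I$ generated by some of the powers $\ell_h^{r_h+1}$, check that $R/I'$ has finite length, and compute or bound its top degree. Indices with $r_h=-1$ contribute the unit ideal and are trivial, so we assume all $r_h\ge 0$.

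For (1), take $I'=\langle \ell_i^{r_i+1},\ell_j^{r_j+1},\ell_k^{r_k+1}\rangle$. After a linear change of coordinates $\ell_i,\ell_j,\ell_k$ become $x,y,z$, so $R/I'$ is the monomial complete intersection $R/\langle x^{r_i+1},y^{r_j+1},z^{r_k+1}\rangle$. Its socle degree is $r_i+r_j+r_k$, which gives the bound.

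For (2), after a change of coordinates the four forms are $x,y,z,x+y+z$, and we set $I'=\langle x^{a},y^{b},z^{c},(x+y+z)^{e}\rangle$ with $a=r_i+1$, etc. Modulo $x^a,y^b,z^c$ the ring $A=R/\langle x^a,y^b,z^c\rangle$ is the monomial complete intersection, which has the strong Lefschetz property in characteristic zero (Stanley, Watanabe) with Lefschetz element $L=x+y+z$. Hence multiplication by $L^{e}$ from $A_t$ to $A_{t+e}$ has maximal rank for all $t$, and $\dim(A/L^eA)_s=\max\{0,h_A(s)-h_A(s-e)\}$. The $h$-vector of $A$ is symmetric and unimodal about $(a+b+c-3)/2$, so $h_A(s)\le h_A(s-e)$ as soon as $s-e/2\ge (a+b+c-3)/2$. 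Thus $(A/L^eA)_s=0$ once $s\ge (a+b+c+e-3)/2=(r_i+r_j+r_s+r_t+1)/2$. The top degree is therefore at most $\lfloor (r_i+r_j+r_s+r_t)/2\rfloor$, and the floor/ceiling bookkeeping has to be checked carefully, including the case where one exponent is very large and the others are small. We will also double-check that the change of coordinates is legitimate: since every three of the four forms are independent, we can rescale them so that their sum relation is $\ell_t=\ell_i+\ell_j+\ell_s$.

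For (3), with constant $r$ we use the inverse system (Emsalem--Iarrobachino) to compute the Hilbert function. We have $\dim (R/I)_d=\dim\bigl(I_{P_1}^{d-r}\cap\cdots\cap I_{P_n}^{d-r}\bigr)_d$, the degree-$d$ forms vanishing to order $d-r$ at the dual points. So $(R/I)_d=0$ exactly when no nonzero degree-$d$ curve has multiplicity $\ge d-r$ at every $P_i$. Suppose $F$ is such a curve and $Q$ is a curve of degree $\alpha$; we want to derive a contradiction for $d>r(\alpha+1)/(\alpha-1)$. First strip off the common components with the minimal-degree curves through the points. Then apply Bézout: if $Q$ is a minimal-degree curve through the $P_i$ sharing no component with $F$, then $d\alpha\ge\sum_i (d-r)\ge n(d-r)$. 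Relating $n$ to $\alpha$ through $n\ge\binom{\alpha+1}{2}$ (since $\alpha$ is minimal) and otherwise peeling $Q$ off $F$ and inducting on $d$ should yield $d(\alpha-1)\le r(\alpha+1)$. This step is the main obstacle. Handling the case where $F$ contains components of $Q$ requires an inductive peeling argument, and making the inequality come out exactly as $r(\alpha+1)/(\alpha-1)$ may instead call for citing a known bound on the regularity of fat point schemes in terms of $\alpha$, or on the regularity of ideals of powers of linear forms. Our first attempt will be to find and cite such a result.
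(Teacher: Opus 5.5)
Parts (1) and (2) are fine and follow the paper. You pass to a subideal $J\subseteq I$ with $R/J$ Artinian, use a complete intersection for (1), and for (2) use Stanley's strong Lefschetz property for $R/\langle x^{a},y^{b},z^{c}\rangle$ with $L=x+y+z$. The paper cites Migliore--Mir\'o-Roig for the resulting degree computation. Your hand computation is correct: the cokernel of $L^{e}$ vanishes in degrees $s\ge (r_i+r_j+r_s+r_t+1)/2$, so the top degree is at most $\lfloor (r_i+r_j+r_s+r_t)/2\rfloor$. It is also right that you only claim the inequality, since equality fails when $r_t$ is much larger than the other three.

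Part (3) has a genuine gap, and it is exactly the step you flag. Your B\'ezout count needs a curve $Q$ of degree $\alpha$ through $X=\{P_1,\ldots,P_n\}$ sharing no component with $F$, and such a $Q$ need not exist. For example, let $X$ be three collinear points on a line $L$ plus one point off $L$. Then $\alpha=2$, every conic through $X$ contains $L$, and $F=L^{m}M^{m}$ (with $M$ a line through the fourth point) shares $L$ with every such $Q$.

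The proposed peeling does not repair this. Dividing $F$ by a power of a common component $C$ lowers the multiplicity only at the points of $X$ on $C$. The residual curve therefore no longer has uniform multiplicity on $X$, so neither an induction on $d$ nor the count $n\ge\binom{\alpha+1}{2}$ applies to it.

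What you need is $\alpha\bigl(I_X^{(m)}\bigr)\ge m(\alpha+1)/2$ for every $m\ge 1$, where $I_X^{(m)}=\bigcap_i\mathfrak{p}_i^{m}$. Equivalently, this is Chudnovsky's bound $\widehat{\alpha}(I_X)\ge(\alpha(I_X)+1)/2$ on the Waldschmidt constant of points in $\mathbb{P}^2$ (see Harbourne--Huneke). It is a genuine theorem, not a single B\'ezout argument.

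With it your apolarity reduction finishes at once. A nonzero $F\in\bigl(\bigcap_i\mathfrak{p}_i^{d-r}\bigr)_d$ gives $d\ge\alpha\bigl(I_X^{(d-r)}\bigr)\ge (d-r)(\alpha+1)/2$, i.e.\ $d(\alpha-1)\le r(\alpha+1)$. Here $\alpha\ge 2$ because $\A$ is essential.

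This is the paper's route. It quotes $\reg(R/I)\le r\widehat{\alpha}/(\widehat{\alpha}-1)$ (which amounts to your apolarity argument), inserts Chudnovsky's inequality, and uses that $w\mapsto rw/(w-1)$ is decreasing.
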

\begin{proof}
For the first two inequalities, we use that $R/I$ has finite length (since $\A$ is essential), so if $J\subseteq I$ is an ideal and $R/J$ has finite length, then $\reg(R/I)\le \reg(R/J)$.

For the first inequality, $J_1=\langle \ell_i^{r_i+1},\ell_j^{r_j+1},\ell_k^{r_k+1}\rangle\subseteq I$ and $J_1$ is a complete intersection, so $\reg(R/J_1)=r_i+r_j+r_k$.

For the second inequality, $J_2=\langle \ell_i^{r_i+1},\ell_j^{r_j+1},\ell_s^{r_s+1}, \ell_t^{r_t+1}\rangle\subseteq I$ and, by the hypotheses, there is a change of coordinates so that $J_2=\langle x^{r_i+1},y^{r_j+1},z^{r_s+1},(x+y+z)^{r_t+1}\rangle$.  With $J_1=\langle x^{r_i+1},y^{r_j+1},z^{r_s+1}\rangle$, a result of Stanley~\cite{Stanley-1980} shows that $R/J_1$ has the strong Lefschetz property and $x+y+z$ is a strong Lefschetz element. Consequently, the computation of \cite[Lemma 2.5]{MiglioreMiro-Roig2002} applies, yielding
$
\reg(R/J_2)=\left\lfloor(r_i+r_j+r_s+r_t)/2\right\rfloor.
$
For the third inequality, let $X=\{P_1,\ldots,P_n\}$ and $I_X$ be the ideal of homogeneous polynomials vanishing on $X$.  It follows from \cite[Proposition~3.11]{Michael-Nelly-SIAGA} (see also \cite[Corollary~4.22 and Example~4.23]{DNS23}) that $\reg(S/I)\le \frac{\widehat{\alpha}(I_X)r}{\widehat{\alpha}(I_X)-1}$, where $\widehat{\alpha}(I_X)$ is the so-called \textit{Waldschmidt constant} of $I_X$.  Since Chudnovsky's conjecture is known for points in $\mathbb{P}^2$ (see e.g. \cite[Proposition~3.1]{HH13}), it follows that $\widehat{\alpha}(I_X)\ge (\alpha(I_X)+1)/2$, where $\alpha(I_X)=\alpha$ is the smallest degree of a non-zero homogeneous polynomial which vanishes on $X$.  Thus
$
\reg(S/I)\le \dfrac{\widehat{\alpha}(I_X)r}{\widehat{\alpha}(I_X)-1}\le \dfrac{\alpha+1}{\alpha-1}r.
$
\end{proof}

We now come to the main result of this section, which refines \Cref{thm.SplineGenericArrangement} by bounding how large $d$ must be for $\dim \calS^{\br}_{d}(\Sigma^\A)$ to agree with $\HP_{\calS^{\br}(\Sigma^\A)}(d)$, using either
the smoothness distribution or the geometry
of the dual points.

\begin{theorem}
\label{maintheoremUsingRegularity}
Suppose $\A$ is an essential hyperplane arrangement in $\RR^3$ of $n$ hyperplanes and $\br\colon\A\to\ZZ_{\ge -1}$ is a smoothness distribution. Assume that the ideal $I = \langle \bm{\ell}^{\br}\rangle$ is minimally generated by $\ell_1^{r_1 + 1},\dots,\ell_p^{r_p + 1}$.  We further assume that $r_1\le \cdots \le r_p$ and $r_{p+1}\le \cdots \le r_n$.  Suppose that one of the following conditions holds:
 \begin{enumerate}[label=({\arabic*})]
\item $d> r_1+r_2+r_3+1+\max\{r_1+r_2+r_3+1,r_n\}$.
\item There are indices $i,j,s,t$ so that every subset of size three of $\{\ell_i,\ell_j,\ell_s,\ell_t\}$ is linearly independent and $d>\lfloor(r_i+r_j+r_s+r_t)/2\rfloor+1+\max\{\lfloor(r_i+r_j+r_s+r_t)/2\rfloor+1,r_n\}$.
\item $\br$ is constant with $r_i=r$ for $i=1,\ldots,n$ for some non-negative integer $r$ and $d>2r\frac{\alpha+1}{\alpha-1}+2$.
\end{enumerate}

Then
\[
\dim\calS^{\br}_d(\Sigma^\A)= 2\dim R_d+ \sum_{\tau\in\Sigma^\A_2}\dim J(\tau)_d -\sum_{\gamma\in\Sigma^\A_1}\dim J(\gamma)_d.
\]
If $\A$ is \textit{generic}, then 
\[
\dim\calS^{\br}_d(\Sigma^\A)=2\binom{d+2}{2}+2\sum_{\substack{H_1,H_2\in \A\\H_1\neq H_2}}\binom{d-\br(H_1)-\br(H_2)}{2}.
\]

\end{theorem}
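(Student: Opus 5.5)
The plan is to reduce everything to \Cref{maintheoremUsingRegularity1}: that theorem already gives both displayed conclusions once $d$ exceeds $\reg(I)+\max\{\reg(I),r_n\}$ (or $2\reg(I)$ when $n=p$). So in each of the three cases I only need an upper bound $\rho$ on $\reg(I)$ with $d>\rho+\max\{\rho,r_n\}$ (and $d>2\rho$ when $n=p$). The translation from $R/I$ to $I$ is \Cref{lemma:propertiesOfRegularity}(3): $\reg(I)=\reg(R/I)+1$. All three bounds on $\reg(R/I)$ come from \Cref{prop:regularityBoundsPowerIdeals}.

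For case (1), I apply \Cref{prop:regularityBoundsPowerIdeals}(1) with $\ell_1,\ell_2,\ell_3$. I need these to be linearly independent, which I will justify as follows. Since $\A$ is essential, the minimal generators $\ell_1^{r_1+1},\dots,\ell_p^{r_p+1}$ of $I$ must involve three independent linear forms; otherwise $I$ would live in a polynomial ring in two variables and $R/I$ would not have finite length. I expect that some reindexing or care is needed to ensure that three independent forms appear among the three smallest exponents $r_1\le r_2\le r_3$. If that fails as stated, I would instead read the hypothesis as choosing independent indices, which only changes notation.

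With independence in hand, $\reg(I)\le r_1+r_2+r_3+1$. Monotonicity of the expression $\rho+\max\{\rho,r_n\}$ in $\rho$ then gives the condition in (1). When $n=p$, I note that $2\rho\le \rho+\max\{\rho,r_n\}$, so the same hypothesis suffices.

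Case (2) is identical in structure, using \Cref{prop:regularityBoundsPowerIdeals}(2) with $\rho=\lfloor(r_i+r_j+r_s+r_t)/2\rfloor+1$.

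For case (3), $\br$ is constant, so every $r_k=r$. Part (3) of \Cref{prop:regularityBoundsPowerIdeals} gives $\reg(I)\le r\frac{\alpha+1}{\alpha-1}+1=:\rho$. I expect the main point here is handling $\max\{\rho,r_n\}$: because $\frac{\alpha+1}{\alpha-1}\ge1$, we have $r_n=r\le\rho$, so the bound becomes $2\rho=2r\frac{\alpha+1}{\alpha-1}+2$. This matches the hypothesis exactly, in both the $n=p$ and $n>p$ situations.

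I should check that $\alpha\ge2$ so the expression is defined. This holds because $\alpha=1$ would put all dual points on a line, meaning all $\ell_i$ vanish at a common point, which contradicts essentiality.

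The generic formula then follows verbatim from the second statement of \Cref{maintheoremUsingRegularity1}. Alternatively, it follows from \Cref{thm.SplineGenericArrangement} together with the vanishing of $\HK_0$ and $\HK_1$ in degree $d$.
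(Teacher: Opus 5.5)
Your reduction matches the paper's proof. That proof is the single line ``apply \Cref{thm.SplineGenericArrangement}, \Cref{regularityBoundH1} and \Cref{prop:regularityBoundsPowerIdeals}, using $\reg(I)=\reg(R/I)+1$'', and routing through \Cref{maintheoremUsingRegularity1} packages exactly that. Your handling of $n=p$, of $\max\{\rho,r_n\}=\rho$ in (3), and of $\alpha\ge 2$ is correct. Cases (2), (3) and the generic formula are fine.

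The one genuine problem is case (1), and you were right to flag it. \Cref{prop:regularityBoundsPowerIdeals}(1) needs $\ell_1,\ell_2,\ell_3$ linearly independent. Essentiality only guarantees that \emph{some} three minimal generators come from independent forms, not the three with smallest exponents, and the paper's proof passes over this silently. No reindexing repairs it, because (1) as written is false.

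Take $\A=\{H_x,H_y,H_{x+y},H_{x-y},H_z\}$ with $\br=1$ on the first four planes and $\br(H_z)=10$.
\begin{itemize}
\item We have $I=\langle x,y\rangle^2+\langle z^{11}\rangle$. Every minimal generating subset of $\bm{\ell}^{\br}$ is three of the four quadrics plus $z^{11}$. So $r_1=r_2=r_3=1$ with $\ell_1,\ell_2,\ell_3$ dependent, $r_4=10$, and the leftover quadric gives $r_n=1$. Hypothesis (1) becomes $d>8$.
\item $R/I\cong\RR[x,y]/\langle x,y\rangle^2\otimes_{\RR}\RR[z]/\langle z^{11}\rangle$, so $\dim(R/I)_9=3$.
\item $\HH_1(\calJ_\bullet)=0$. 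In any syzygy the coefficient of $e_{H_z}$ lies in $\langle x,y\rangle^2$. Subtracting Koszul syzygies leaves a syzygy among the four quadrics, whose planes all contain the $z$-axis, so it lies in $V^{1,\br}_\A$.
\end{itemize}
By \Cref{prop:EulerCharacteristicAndDimension}, $\dim\calS^{\br}_9(\Sigma^\A)$ is therefore $3$ less than the asserted formula: $291$ versus $294$. You can confirm $291$ directly, since $\calS^{\br}(\Sigma^\A)\cong\calS'\oplus\calS'(-11)$, where $\calS'$ is the module of $C^1$ splines on the planar fan of $x,y,x+y,x-y$, tensored with $\RR[z]$.

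So you must add the hypothesis that $\ell_1,\ell_2,\ell_3$ are independent, or replace $r_1+r_2+r_3$ by $r_i+r_j+r_k$ for an independent triple. This changes the threshold on $d$, so it is a correction to the statement, not ``only a change of notation''. With it, your argument goes through verbatim.
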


\begin{proof}
Apply \Cref{thm.SplineGenericArrangement}, \Cref{regularityBoundH1}, and \Cref{prop:regularityBoundsPowerIdeals}, keeping in mind that $\reg(I)=\reg(S/I)+1$.
\end{proof}

\begin{remark}
The condition that there are four linear forms $\ell_i,\ell_j,\ell_s,\ell_t$ so that every subset of size three of $\{\ell_i,\ell_j,\ell_s,\ell_t\}$ is linearly independent is equivalent to saying that the hyperplane arrangement $\A$ is \textit{indecomposable}.  That is $\A$ is not a product of a one-dimensional and a two-dimensional hyperplane arrangement.
\end{remark}

\begin{remark}
Suppose $\br$ is constant with $r_i=r$ for every $i=1,\ldots,n$.  Then the bound in \Cref{maintheoremUsingRegularity} part $(2)$ is $d>4r+2$.  The bound in part $(3)$ performs at least as well as the bound in part $(2)$ as long as $\alpha\ge 3$.  If $\alpha>3$ (that is, the dual points do not lie on a cubic) then the bound in part $(3)$ is strictly better.
\end{remark}

\begin{remark}
Let $d^*_{\A,\br}$ be the maximum value at which the Hilbert function of $\calS^\br(\Sigma^\A)$ is not equal to the Hilbert polynomial of $\calS^\br(\Sigma^\A)$.  This is sometimes called the \textit{postulation number}.  The three parts of \Cref{maintheoremUsingRegularity} give three bounds on $d^*_{\A,\br}$.  These bounds are best when $\A$ is a generic arrangement, but even in the generic setting the bounds are not always optimal.  For instance, if $\A$ is generic and $\br=\bm{0}$ assigns the constant $0$ to every hyperplane, then \Cref{thm:C0} implies that $d^*_{\A,\bm{0}}=1$.  However, the bounds $(1),(2),$ and $(3)$ in \Cref{maintheoremUsingRegularity} only give $d^*_{\A,\bm{0}}\le 2$.  If $\A$ is generic with four hyperplanes and $\br$ assigns the constant $r$ to every hyperplane, then $(2)$ and $(3)$ both yield $d^*_{\A,\bm{r}}\le 4r+2$.  However, \Cref{thm:threefourHyperplanes} implies that $d^*_{\A,\br}=4r+1$.  (The discrepancy appears to be caused by the fact that the bound $\reg(\HK_1(I))\le 2\reg(I)$ is off by exactly one.)  On the other hand, the bound in (3) tends to be tight if $\A$ has many hyperplanes and is very generic.  According to \Cref{theorem: spline dimension many planes}, if $\A$ is generic, $\br$ assigns the constant $r$ to every hyperplane, and $\langle\bm{\ell}^{\bm{r}}\rangle=\langle x,y,z\rangle^{r+1}$, then $d^*_{\A,\br}=2r+2$.  This agrees with the bound in $(3)$ if $\alpha>4r+1$.  However, the geometry of $\A$ can have an impact here.  For instance, if $\br=\bm{1}$ and the dual points of $\A$ all lie on a conic, \Cref{thm:C1} implies that $d^*_{\A,\bm{1}}=4$.  However, $(3)$ only implies that $d^*_{\A,\bm{1}}\le 8$. 
\end{remark}

\section{Complete dimension computation for small generic hyperplane arrangements}\label{sec:smallnumber}
In this section, we use \Cref{thm.SplineGenericArrangement} to provide a formula for the dimension of $\calS^{\br}_d(\Sigma^\A)$ (for all $d\ge 0$) by establishing the Hilbert function of the Koszul homology modules $\HK_1(\bm{\ell}^{\br})$ when $\A$ is generic and has five or less hyperplanes.

In the results in this section, a combinatorial property of Koszul homology modules is also established: for generic hyperplane arrangements with five or less hyperplanes, the dimension of $\HK_1(\bm{\ell}^{\br})$ only depends on the smoothness distribution.  We say that the obtained formula for $\dim\calS^\br_d(\Sigma^\A)$ is \textit{combinatorial} because it does not depend on the exact linear forms defining $\A$, but only on the fact that $\A$ is generic.  This is no longer true for six hyperplanes: we provide in \Cref{ex:sixhyperplanes} two generic hyperplane arrangements of six hyperplanes whose corresponding spline modules have different Hilbert functions (for constant smoothness $r=2$).

The arguments in this section require properties of Koszul homology modules that go beyond the foundations established so far; we record them next. Recall that the grade of an ideal is the length of the longest regular sequence contained in the ideal.

\begin{proposition}
\label{prop:someKoszul}
Let $\kk$ be a field, $R=\kk[x_1, \dots, x_m]$ and $\by=(y_1, \dots, y_n)$  in $R$ with $\deg(y_i)=r_i+1$ for all $i\in [n]$. Set $\overline r=\sum_{i=1}^n\deg(y_i)=n+\sum_{i=1}^nr_i$. If $\grade (\langle \by\rangle)=m$, then 
 \begin{enumerate}[label=({\arabic*})]
\item $\HK_i(\by)_{j}\cong \HK_{n-m-i}(\by)_{\overline{r}-m-j}$ for all $i,j\ge 0$. 
\smallskip
\item Assume $m=3$. For each $d\ge 0$, 
\[
\dim \HK_1(\by)_d-\dim \HK_0(\by)_d=\sum_{i=2}^{n-3} (-1)^i\dim \HK_i(\by)_d-\!\sum_{A \subseteq [n]} (-1)^{|A|} \binom{d+2-|A|-\!\sum_{i\in A}r_i}{2}.
\]
\end{enumerate}
\end{proposition}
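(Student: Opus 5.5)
Part (1) is the standard self-duality of the Koszul complex, and part (2) follows from an Euler characteristic count. For (1), the key input is that $\grade\langle\by\rangle=m$ equals the depth of $R$. I will use the graded self-duality $\K_\bullet(\by)\cong \Hom_R(\K_\bullet(\by),R)(-\overline r)$, shifted by $n$. This comes from the perfect pairing $\bigwedge^iF\otimes\bigwedge^{n-i}F\to\bigwedge^nF\cong R(-\overline r)$, where $F=\bigoplus R(-r_i-1)e_i$. Consequently $\HK_i(\by)\cong \HH^{n-i}(\Hom_R(\K_\bullet,R))(-\overline r)$.

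Next I will identify the cohomology of the dual complex. Each $\HK_i(\by)$ is annihilated by $I=\langle \by\rangle$, and $I$ is $\mathfrak m$-primary in the case of interest. Using grade $m$, I plan to invoke the known isomorphism $\HK_i(\by)\cong \Ext^{n-i}_R(R/I,\ldots)$ type duality. In practice I would cite \cite[Theorem 1.6.17 and Proposition 1.6.10]{cohenMacaulay}: $\HK_{n-m}(\by)\cong \Ext^m_R(R/I,R)(-\overline r)$, and the Koszul homology is a module over the "Koszul algebra". The cleaner route is graded local duality. Since $R/I$ has finite length, the $\HK_i$ have finite length, and the hyperhomology spectral sequence for $\Hom_R(\K_\bullet,R)$ collapses because $\Ext^j_R(-,R)$ vanishes on finite-length modules unless $j=m$. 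This gives $\HH^{n-i}(\K^\vee)\cong \Ext^m_R(\HK_{n-m-i}(\by),R)$, up to the shift by $\overline r$.

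Finally, for a finite-length graded module $M$, graded duality gives $\Ext^m_R(M,R)\cong \Hom_\kk(M,\kk)(m)$. Hence $\dim \Ext^m_R(M,R)_j=\dim M_{-m-j}$. Combining the shifts yields $\dim\HK_i(\by)_j=\dim \HK_{n-m-i}(\by)_{\overline r-m-j}$, which is the claimed isomorphism of vector spaces. The delicate point, and the main obstacle, is bookkeeping the signs of the shifts and justifying the collapse of the spectral sequence. If the collapse proves awkward, I would instead argue by induction via a maximal regular sequence of length $m$ inside $I$, in the style of \cite[Section 1.6]{cohenMacaulay}.

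For (2), take $m=3$ and apply the Euler characteristic to the exact complex of finite-dimensional vector spaces $\K_\bullet(\by)_d$. This gives $\sum_i(-1)^i\dim\HK_i(\by)_d=\sum_i(-1)^i\dim\K_i(\by)_d$. Here $\K_i=\bigoplus_{|A|=i}R(-\sum_{a\in A}(r_a+1))$, so $\dim \K_i(\by)_d=\sum_{|A|=i}\binom{d+2-|A|-\sum_{a\in A}r_a}{2}$. The homology $\HK_i$ vanishes for $i>n-3$, since by (1) it is dual to $\HK_{n-3-i}$, which is zero for negative index. Therefore the alternating sum runs over $0\le i\le n-3$. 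Moving $\HK_0$, $\HK_1$ and the $\HK_i$ with $i\ge2$ to the appropriate sides gives the stated identity.
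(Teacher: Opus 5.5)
Your proposal is correct and is essentially the paper's argument. For (1), you prove directly, via the self-duality $\K_\bullet(\by)\cong\Hom_R(\K_\bullet(\by),R)(-\overline r)$ and the collapse of $E_2^{p,q}=\Ext^p_R(\HK_q(\by),R)\Rightarrow\HH^{p+q}(\Hom_R(\K_\bullet(\by),R))$ onto the column $p=m$, the duality $\HK_i(\by)\cong\Ext^m_R(\HK_{n-m-i}(\by),R)(-\overline r)$ that the paper imports from Chardin's Lemma~5.7; you then finish with the same Matlis duality for finite-length modules. For (2), the Euler characteristic count is the paper's, except that you get $\HK_i(\by)=0$ for $i>n-3$ from (1) rather than from grade sensitivity; note also that $\K_\bullet(\by)_d$ need only be a finite complex, not an exact one, for $\sum_i(-1)^i\dim\HK_i(\by)_d=\sum_i(-1)^i\dim\K_i(\by)_d$ to hold.
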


\begin{proof}
Set $I=\langle \by \rangle$. 
To prove Statement (1), we use \cite[Lemma 5.7]{chardin2004regularity}, that gives 
\begin{equation}
\label{eq:dual}
\HK_i(\by)^{\vee\vee}(\overline{r}+a_S)\cong \HK_{n-m-i}(\by)^\vee\quad\text{for all $i$,}
\end{equation}
where $a_S=-m$ is the $a$-invariant of $R$ and   $(-)^\vee=\Ext_R^m(-,\omega_R)$ with $\omega_R$ the canonical module of $R$.  For any finite length $R$-module $U$, the module $U^\vee $ is the Matlis dual of $U$, where $U^\vee\cong\Hom_{\kk}(U,\kk)$ and $(U^\vee)_j\cong \Hom_{\kk}(U_{-j},\kk)$ for all $j$; see \cite{cohenMacaulay} for basics of Matlis duality. Further, since $\grade(I)=m$ and $I$ annihilates Koszul homology, the modules $\HK_i(\by)$ are finite length for all $i\ge 0$. 
Consequently, \cref{eq:dual} gives an isomorphism of graded $R$-modules 
$
\HK_i(\by)(\overline{r}-m)\cong \HK_{n-m-i}(\by)^\vee$
that translates to an isomorphism of $\kk$-vector spaces 
$\HK_i(\by)_{j+\overline{r}-m}\cong \HK_{n-m-i}(\by)_{-j} \quad\text{for all $i,j$}.$
After a re-indexing,  this yields Statement (1). 

For Statement (2), we use the fact that in a finite complex of finitely generated graded $R$-modules, the alternating sum of the Hilbert functions of the homologies is equal to the alternating sum of the Hilbert functions of the original modules in the complex. Applying this to the Koszul complex $\K_\bullet(\by)$  we obtain
 \begin{equation}
 \label{e:alternate}
 \dim \HK_1(\by)_d-\dim \HK_0(\by)_d=\sum_{i=2}^n (-1)^i\dim \HK_i(\by)_d-\sum_{i=0}^n (-1)^i\dim\K_i(\by)_d\,.
 \end{equation}
The grade sensitivity property of Koszul homology, see \cite[Theorem 1.6.17(b)]{cohenMacaulay}, gives $\HK_i(\by)=0$ for $i>n-3$. 
The formula follows since $m=3$ and $\K_i(\by)\cong \bigoplus\limits_{A\subseteq [n],|A|=i} R(-\sum\limits_{i\in A}(r_i+1))$.
\end{proof}

\subsection{Powers of linear forms and fat points} 
The following result elegantly transforms the problem of studying ideals generated by powers of linear forms into the geometric problem of studying ``fat points", i.e., finding the number of degree $d$ curves in $\mathbb{P}^2$ that pass through a specified set of points with prescribed multiplicities. For a linear form $\ell = \alpha x + \beta y + \gamma z \in R$, we define its dual point as 
\begin{equation}
\label{eq:dualPoint}
P = [\alpha : \beta : \gamma] \in \mathbb{P}^2= \PP(\RR^3).
\end{equation}
The defining ideal of this point is given by $\mathfrak{p} = \langle \alpha Y -\beta X, \beta Z - \gamma Y, \gamma X - \alpha Z \rangle \subset S$.

\begin{theorem}[Theorem IIA \cite{Iarrobino}]
\label{thm:Apolarity}
Let $I = \langle \ell_1^{r_1+1}, \dots, \ell_n^{r_n+1} \rangle \subset R$ be an ideal generated by powers of linear forms, and let $\mathfrak{p}_i \subset S$ be the defining ideal of the point $P_i \in \mathbb{P}^2$ dual to $\ell_i$. 
With the convention that $\mathfrak{p}^m=S$ when $m\le 0$,  we have
$$\dim (R/I)_d = \dim \Big( \bigcap_{i=1}^n \mathfrak{p}_i^{d - r_i} \Big)_d \quad\text{for all $d\ge 0$}.$$
\end{theorem}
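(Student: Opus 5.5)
This is the apolarity (Macaulay inverse system) correspondence. I would prove it by pairing $R$ with a second polynomial ring $S=\RR[X,Y,Z]$ through differentiation, so that $X,Y,Z$ act on $R$ as $\partial_x,\partial_y,\partial_z$. In each degree $d$ the induced bilinear pairing $S_d\times R_d\to\RR$ is perfect: monomials pair diagonally, with nonzero factorial coefficients. Hence
\[
\dim (R/I)_d=\dim (I_d)^{\perp}=\dim\{G\in S_d:\ \langle G,f\rangle=0 \text{ for all } f\in I_d\},
\]
and the work is to identify $(I_d)^\perp$ with $\bigl(\bigcap_i\mathfrak p_i^{d-r_i}\bigr)_d$.

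Since $I$ is generated by the $\ell_i^{r_i+1}$, the space $I_d$ is spanned, for each $i$, by the products $\ell_i^{r_i+1}h$ with $h\in R_{d-r_i-1}$. The perp of a sum is the intersection of the perps, so it suffices to fix one $i$ and show
\[
\bigl(\ell_i^{r_i+1}R_{d-r_i-1}\bigr)^\perp=(\mathfrak p_i^{d-r_i})_d .
\]
When $d-r_i\le 0$ the left-hand set of products is zero, so its perp is all of $S_d$. This agrees with the convention $\mathfrak p^m=S$.

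To prove the displayed equality, I would change coordinates by an invertible linear map so that $\ell_i=x$. This is legitimate provided the dual change of coordinates is applied on the $S$ side, which preserves the pairing. Under it the dual point becomes $P=[1:0:0]$ and $\mathfrak p=\langle Y,Z\rangle$. The subspace $x^{r+1}R_{d-r-1}$ is spanned by the monomials $x^ay^bz^c$ with $a\ge r+1$. Because the pairing is diagonal on monomials, its perp is spanned by the monomials $X^aY^bZ^c$ of degree $d$ with $a\le r$, that is, with $b+c\ge d-r$. These are exactly the degree-$d$ monomials of $\langle Y,Z\rangle^{d-r}$, and since $\langle Y,Z\rangle^{d-r}$ is a monomial ideal, the two spaces coincide.

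Intersecting over $i$ then gives $(I_d)^\perp=\bigl(\bigcap_i\mathfrak p_i^{d-r_i}\bigr)_d$. Note that the degree-$d$ part of an intersection of homogeneous ideals is the intersection of their degree-$d$ parts. Taking dimensions finishes the argument.

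The main thing to check carefully is coordinate compatibility. Under the substitution $x\mapsto Ax$ on $R$, the pairing is preserved when $S$ transforms by the inverse transpose of $A$. I need the dual point of $\ell_i$ (its coefficient vector) to map correspondingly, so that $\mathfrak p_i$ goes to $\langle Y,Z\rangle$ rather than to the ideal of some other point. This holds because the coefficient vector of a linear form transforms contragrediently. Equivalently, one can avoid coordinates altogether by noting that $G\in S_d$ vanishes to order $\ge d-r$ at $P$ if and only if $\langle G,\ell^{r+1}h\rangle=0$ for all $h$; this follows from the identity $\langle G,\ell^{d}\rangle=d!\,G(P)$ and its derivatives.
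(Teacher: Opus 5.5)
Your proof is correct. It is the standard apolarity (Macaulay inverse system) argument, which is exactly what the paper relies on: the paper gives no proof of its own, citing Iarrobino's Theorem IIA and pointing to Geramita's survey for the apolarity action. Your reduction to $\ell_i=x$ via the contragredient change of coordinates, the monomial computation $\bigl(x^{r+1}R_{d-r-1}\bigr)^{\perp}=\bigl(\langle Y,Z\rangle^{d-r}\bigr)_d$, and your handling of the range $d\le r_i$ (including $r_i=-1$) are all sound.
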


\Cref{thm:Apolarity} is a consequence of the \textit{apolarity} action; see the excellent survey~\cite{Geramita}.

\subsection{The case of three, four and five hyperplanes}

We provide formulas for $\dim\calS^{\br}_d(\Sigma^\A)$ when $d = 3,4,5$. In all of these cases, the fomulas are combinatorial.

\begin{remark}
\label{r:generic-grade}
If $\A\subseteq \R^3$ is a generic hyperplane arrangement with $n$ hyperplanes and  $I=\langle\bm{\ell}^{\br}\rangle$, then $\grade(I)=\min\{n,3\}$. Indeed, up to a change of variables, we may assume that 
$\{x_1, \dots, x_g\}\subseteq \{\ell_H\colon H\in \A\}$, where $g=\min\{3,n\}$.
\end{remark}

In what follows, we  set $\br(\emptyset)=0$ and
$
\br(A)=\sum_{H\in A}\br(H)$ for $A\subseteq \A$. 
\begin{theorem}[{\bf Three and four hyperplanes}]
\label{thm:threefourHyperplanes}
Suppose $\A\subseteq\RR^3$ is a generic hyperplane arrangement with $n\in\{3,4\}$ hyperplanes and $\br:\A\to\ZZ_{\ge -1}$ is a smoothness distribution which is constant along hyperplanes.  Then for all $d\ge 0$
\[
\dim\calS^{\br}_d(\Sigma^\A)=2\binom{d+2}{2}+2\sum_{A\subseteq \A, |A|=2}\binom{d-\br(A)}{2}-\sum_{A\subseteq \A}(-1)^{|A|}\cdot \binom{d+2-|A|-\br(A)}{2}.
\]
In particular, $\dim\calS^{\br}_d(\Sigma^\A)$ is a combinatorial invariant. 
\end{theorem}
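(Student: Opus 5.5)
The plan is to start from the generic case of \Cref{thm.SplineGenericArrangement}, which already gives
\[
\dim\calS^{\br}_d(\Sigma^\A)=2\binom{d+2}{2}+2\sum_{A\subseteq\A,\,|A|=2}\binom{d-\br(A)}{2}+\dim \HK_1(\bm{\ell}^{\br})_d-\dim \HK_0(\bm{\ell}^{\br})_d .
\]
So it suffices to show that, for $n\in\{3,4\}$ and every $d\ge 0$,
\[
\dim \HK_1(\bm{\ell}^{\br})_d-\dim \HK_0(\bm{\ell}^{\br})_d=-\sum_{A\subseteq\A}(-1)^{|A|}\binom{d+2-|A|-\br(A)}{2}.
\]
This is exactly the content of \Cref{prop:someKoszul}(2), applied with $m=3$ and $\by=\bm{\ell}^{\br}$, provided that the correction term $\sum_{i=2}^{n-3}(-1)^i\dim\HK_i(\by)_d$ vanishes.

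First I verify the hypothesis of \Cref{prop:someKoszul}: $\grade\langle\bm{\ell}^{\br}\rangle=3$. By \Cref{r:generic-grade}, for a generic arrangement with $n\ge 3$ hyperplanes the grade is $\min\{n,3\}=3$. Concretely, after a linear change of coordinates, three of the $\ell_H$ are $x,y,z$, and the corresponding powers form a regular sequence.

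Next, the sum $\sum_{i=2}^{n-3}$ is empty when $n\in\{3,4\}$, since then $n-3\le 1<2$. This uses the grade sensitivity already built into \Cref{prop:someKoszul}(2), namely $\HK_i=0$ for $i>n-3$. Substituting the resulting identity into \Cref{thm.SplineGenericArrangement} gives the formula, indexing subsets by $A\subseteq\A$ instead of $A\subseteq[n]$ and writing $\sum_{i\in A}r_i=\br(A)$.

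Finally, the right-hand side depends only on $n$ and on the values of $\br$, not on the actual linear forms. This is the claimed combinatorial invariance.

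The only point needing care is the range of $\br$. The statement allows values $-1$, whereas \Cref{not:ellbr} and \Cref{prop:someKoszul} are phrased with $\deg(y_i)=r_i+1\ge 1$. If some $\br(H)=-1$, then $\ell_H^{0}=1$ and the ideal is the unit ideal, so all Koszul homology vanishes; the Koszul complex is exact and the Euler-characteristic identity (\ref{e:alternate}) still yields the formula directly. Alternatively, the case $\br\ge 0$ can be taken as the intended one. Apart from this, I expect no real obstacle, since the argument is pure bookkeeping on top of the two cited results.
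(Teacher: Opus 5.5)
Your proposal is correct and is the paper's argument: grade $3$ from \Cref{r:generic-grade}, then \Cref{thm.SplineGenericArrangement} combined with \Cref{prop:someKoszul}(2), whose correction sum $\sum_{i=2}^{n-3}$ is empty for $n\in\{3,4\}$ (the paper's proof cites a nonexistent part (3), which should read (2), as you have it). Your separate treatment of values $\br(H)=-1$, where the ideal is the unit ideal, all Koszul homology vanishes and the alternating sum is zero, covers an edge case the paper leaves implicit.
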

\begin{proof}
Since $\grade(\bm{\ell}^\br)=3$ by \cref{r:generic-grade}, the statement follows directly from \cref{thm.SplineGenericArrangement} and \cref{prop:someKoszul}(3). 
\end{proof}

\begin{theorem}[{\bf Five hyperplanes}]
\label{thm:fiveHyperplanes}
Suppose $\A\subseteq\RR^3$ is a generic hyperplane arrangement with $n=5$ hyperplanes and $\br:\A\to\ZZ_{\ge -1}$ is a smoothness distribution which is constant along hyperplanes.  Then for all $d\ge 0$
\begin{align*}
\dim\calS^{\br}_d(\Sigma^\A)=&2\binom{d+2}{2}+2\sum_{A\subseteq \A, |A|=2}\binom{d-\br(A)}{2}-\sum_{A\subseteq \A}(-1)^{|A|}\cdot \binom{d+2-|A|-\br(A)}{2}\\
&+\dim (R/\langle \bm{\ell}^{\br}\rangle)_{\br(\A)+|\A|-3-d}.
\end{align*}
Furthermore, there is an algorithm to compute $\dim(R/\langle \bm{\ell}^{\br}\rangle)_d$ for any $d$,  which only depends on $\br$.  In particular, $\dim\calS^{\br}_d(\Sigma^\A)$ is a combinatorial invariant. 

\end{theorem}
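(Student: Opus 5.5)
The plan is to combine the generic formula of \Cref{thm.SplineGenericArrangement} with the Euler characteristic identity of \Cref{prop:someKoszul}(2), taking $m=3$ and $n=5$. By \Cref{r:generic-grade}, $\grade\langle\bm{\ell}^{\br}\rangle=3$, so \Cref{prop:someKoszul} applies. For $n=5$ the sum $\sum_{i=2}^{n-3}(-1)^i\dim\HK_i(\bm{\ell}^{\br})_d$ has exactly one term, namely $\dim\HK_2(\bm{\ell}^{\br})_d$. This gives
\[
\dim\HK_1(\bm{\ell}^{\br})_d-\dim\HK_0(\bm{\ell}^{\br})_d=\dim\HK_2(\bm{\ell}^{\br})_d-\sum_{A\subseteq\A}(-1)^{|A|}\binom{d+2-|A|-\br(A)}{2}.
\]
Substituting this into the generic formula of \Cref{thm.SplineGenericArrangement} produces the displayed expression, except that the last term appears as $\dim\HK_2(\bm{\ell}^{\br})_d$.

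Next I will use the duality in \Cref{prop:someKoszul}(1) with $i=2$, $n-m-i=0$ and $\overline r=|\A|+\br(\A)$. It gives $\HK_2(\bm{\ell}^{\br})_d\cong\HK_0(\bm{\ell}^{\br})_{\overline r-3-d}=(R/\langle\bm{\ell}^{\br}\rangle)_{\br(\A)+|\A|-3-d}$. Here I must check that the statement, stated for $i,j\ge 0$, still covers negative target degrees. Both sides vanish there, since Matlis duality is an isomorphism of graded pieces in every degree. This completes the formula.

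For the algorithmic claim, the main obstacle is showing that $\dim(R/\langle\bm{\ell}^{\br}\rangle)_k$ depends only on $\br$ and not on the particular generic forms. By \Cref{thm:Apolarity} it equals $\dim(\bigcap_{i=1}^5\mathfrak p_i^{k-r_i})_k$, the dimension of degree-$k$ plane curves through the five dual points $P_1,\dots,P_5$ with multiplicities $k-r_i$. Genericity of $\A$ (no three hyperplanes through a line) means no three of the $P_i$ are collinear. Five points in $\PP^2$ with no three collinear lie on a unique smooth (irreducible) conic $C$. Any two such configurations are related by a projective transformation carrying conic to conic, after which the points are five points on a common conic $C\cong\PP^1$.

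I would then invoke Catalisano's algorithm \cite{Catalisano-1991} for fat points on a smooth conic. It computes the Hilbert function purely from the multiplicities by repeatedly splitting off $C$ while it is a fixed component: if $2k<\sum m_i$, every curve of degree $k$ contains $C$, so we reduce to degree $k-2$ with all multiplicities lowered by one. Once no splitting is forced, it applies the expected-dimension formula for points on a conic. Since every step uses only the multiplicities $k-r_i$ (and the fact that the points are distinct on the conic, with no three collinear), the output depends only on $\br$, which proves combinatorial invariance. I expect the delicate part to be justifying that Catalisano's result applies for arbitrary distinct points on a smooth conic, which is exactly our setting. As a fallback I would argue directly by Bézout-type residual exact sequences $0\to I_{Z'}(k-2)\to I_Z(k)\to \mathcal O_C(k)\otimes(\text{points})\to 0$. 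On $C\cong\PP^1$ the restriction depends only on degrees, so the same induction applies.
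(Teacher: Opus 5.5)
Your proposal is correct and follows the paper's own proof. You combine \Cref{thm.SplineGenericArrangement} with \Cref{prop:someKoszul}(2) for $n=5$, where only the $\HK_2$ term survives. You then use the duality in \Cref{prop:someKoszul}(1) to rewrite $\dim\HK_2(\bm{\ell}^{\br})_d$ as $\dim(R/\langle\bm{\ell}^{\br}\rangle)_{\br(\A)+|\A|-3-d}$; your check that this holds in every degree is a welcome addition. Finally you use apolarity and Catalisano's algorithm on the smooth conic through the five dual points.

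The only quibble concerns your paraphrase of Catalisano's algorithm. It omits the step in which the line through the two points of largest multiplicity is split off. So ``expected dimension once the conic is no longer forced'' is not literally right. Likewise, your residual-sequence fallback would need an $H^1$-vanishing that it does not supply. Since you invoke Catalisano's theorem itself, as the paper does, the argument stands.
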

\begin{proof}
We have $\grade(\langle \bm{\ell}^{\br}\rangle)=3$ by \cref{r:generic-grade}. Using \cref{prop:someKoszul}(1) with $m=3$ and $n=5$, note that
\[
\HK_2(\bm{\ell}^\br)_d\cong \HK_0(\bm{\ell}^\br)_{\br(\A)+|\A|-3-d}\cong (R/\langle \bm{\ell}^\br\rangle)_{\br(\A)+|\A|-3-d}.
\]
Using this computation, the formula for $\dim\calS^{\br}_d(\Sigma^\A)$ is a direct consequence of 
\cref{thm.SplineGenericArrangement} and \cref{prop:someKoszul}(3). 
Since we assume that the hyperplane arrangement is generic, no three of the points $P_i$ are aligned. In particular, the dual point $P_1,\ldots,P_5$ lie on a nonsingular (irreducible) conic.  An algorithm due to Catalisano~\cite[Theorem~3.1]{Catalisano-1991} can be used to compute the Hilbert function of any ideal of fat points for which the underlying points lie on an irreducible conic.  As noted in \cite[Remark 3.2]{Catalisano-1991},
the Hilbert function only depends on the multiplicities $m_i$ and not on the relative position of the points on the conic 
(see \Cref{thm:catalisanoAlgorithm} for details).  Thus, the Hilbert function of $\langle \bm{\ell}^{\br} \rangle$ is independent of $\A$, and only depends on the multiplicities of the points determined by $\br$.
\end{proof}

In \cref{ex:5forms2smoothness}, we illustrate \Cref{thm:fiveHyperplanes} and Catalisano's algorithm by computing $\dim\calS^{2}_d(\Sigma^\A)$ for five generic hyperplanes.

\subsection{The case of six hyperplanes}
We provide an example in which we show that in the case of six hyperplanes, the formula for $\dim\calS^\br_d(\Sigma^\A)$ is not combinatorial. Namely, there exist two sequences of six hyperplanes and a  smoothness distribution $\br: \A \xrightarrow[]{} \mathbb{Z}_{\geq -1}$ such that the corresponding Koszul homology modules $\HK_1(\bm{\ell}^{\br})$ have different Hilbert series.

\begin{example}\label{ex:sixhyperplanes}
    Let $\phi \in \RR$ be a solution of the equation $x^2-x-1=0$. Consider the six linear forms $\tilde{\ell}_1=x, \tilde{\ell}_2=y,\tilde{\ell}_3=z,\tilde{\ell}_4=x+y+z$, $\tilde{\ell}_5=x-\phi y+(\phi+1)z$, and $\tilde{\ell}_6=x+(\phi+1)y-\phi z$ and let $\br$ be the constant smoothness distribution $(2,2,2,2,2,2)$. Computing with \texttt{Macaulay2}, the Hilbert series of $\HK_1({\tilde{\bm{\ell}}^{\br}})$ equals 
$
\hilb_{\HK_1({\tilde{\bm{\ell}}^{\bm{2}}})}(t) = 3t^4+15t^5+17t^6+10t^7+t^8.
$
However, for the sequence $\ell_1 = x,\ell_2 = y, \ell_3 = z, \ell_4 = x + y + z,\ell_5 = x + 2y + 2z, \ell_6 = 2x + y + 2z$, the Hilbert series of $\HK_1(\bm{\ell}^{\br})$ equals
$
\hilb_{\HK_1({\bm{\ell}^{\bm{2}}})}(t) = 3t^4+15t^5+17t^6+9t^7.
$
Both sequences define a generic arrangement, meaning that no three of the hyperplanes intersect on a line. Therefore, by \Cref{thm.SplineGenericArrangement}, we deduce that
\begin{align*}
    \dim\calS^2_7(\Sigma^{\tilde{\A}}) = & 2\binom{9}{2} + 2\binom{6}{2}\binom{3}{2} + \dim \HK_1(\tilde{\bm{\ell}}^{\bm{2}}) = 72 +90+ 10 = 172 \mbox{ and} \\ 
    \dim\calS^2_7(\Sigma^{\A}) = & 2\binom{9}{2} + 2\binom{6}{2}\binom{3}{2} + \dim \HK_1(\bm{\ell}^{\bm{2}}) = 72 +90+ 9 = 171,
\end{align*}
where $\tilde{\A}$ and $\A$ are the hyperplane arrangements 
corresponding to the sequences $\tilde{\bm{\ell}}$ and $\bm{\ell}$.

We also observe that a distinguishing feature between the two sequences is that while for $\tilde{\bm{\ell}}^{\bm{2}}$ there exists a linear sygygy in the module of second Koszul differentials, this does not happen for the sequence $\bm{\ell}^{\bm{2}}$. This seems to be a tip of the iceberg situation, as we have observed a similar behaviour for higher constant smoothness distributions.
\end{example}

\begin{remark}
If $\A$ is a generic arrangement with seven or more hyperplanes, the dimension of $\calS^1_4(\Sigma^\A)$ is not combinatorial -- see \Cref{thm:C1}.  The geometry in the case of \Cref{thm:C1} is clear; $\dim \calS^1_4(\Sigma^\A)$ has one value when the dual points all lie on a conic and a different value otherwise.  On the other hand, we have no explanation for the geometry that contributes to the different dimensions of $\calS^2_7(\Sigma^\A)$ arising in \Cref{ex:sixhyperplanes}.  This suggests that describing $\dim \calS^r_{3r+1}(\Sigma^\A)$ could be a fundamental challenge when $\A$ is a generic arrangement.
\end{remark}

\section{The generic case with constant smoothness and many hyperplanes}
\label{sec.uniformsmoothness}

In this section, we compute the exact  dimension of the spline space $\calS^\br_d(\Sigma)$ under two assumptions: 
 \begin{enumerate}[label=({\arabic*})]
    \item The smoothness distribution $\boldsymbol{r}:\mathscr{A} \xrightarrow{} \Z_{\geq -1}$ is constant, that is, $\br(H_i) = r$ for $i\in [n]$. 
    \item The ideal $\langle \bm{\ell}^{\br}\rangle$ is equal to the ideal $\langle x,y,z\rangle^{r+1}$.
\end{enumerate}
In particular, (1) implies that all elements of the sequence $\boldsymbol{\ell}^{\br}$ have the same degree $r+1$.  For condition (2) to hold, there must be a subset of $\{\ell_i^{r+1}:1\le i\le n\}$ with cardinality $\binom{r+3}{2}$ which is linearly independent.  Equivalently, via apolarity, there is no homogeneous polynomial of degree $r+1$ which vanishes on the dual points $P_1,\ldots,P_n$.  This is true for `most' collections of at least $\binom{r+3}{2}$ points in $\mathbb{P}^2(\RR)$.

Since $\langle \bm{\ell}^{\br}\rangle=\langle x,y,z\rangle^{r+1}$, the $0$'th Koszul homology of the sequence satisfies $\dim \HK_0(\boldsymbol{\ell}^{\boldsymbol{r}})_d
=\binom{d+2}{2}$ if  $d \le r$ and $\dim \HK_0(\boldsymbol{\ell}^{\boldsymbol{r}})_d=0$ if $d>r$. In order to provide an analogous description of the first Koszul homology of $\bm{\ell}^{\br}$, we first describe the Hilbert function of the first Koszul homology of $\langle x,y,z\rangle^{r+1}$.
\begin{proposition}
\label{p:Koszul-homology-powers}
The first Koszul homology of $\langle x,y,z\rangle^{r+1}\subseteq \RR[x,y,z]$ has Hilbert function
\[
\dim \HK_1(\langle x,y,z\rangle^{r+1})_d=
\begin{cases}
0 & \text{if } d\le r \text { or } d\ge 2r+3\\
\binom{r+3}{2}\binom{d+1-r}{2}-\binom{d+2}{2} & \text{if } r+1\le d\le 2r+1\\
\binom{r+3}{2}\binom{d+1-r}{2}-\binom{d+2}{2}-\binom{\binom{r+3}{2}}{2} & \text{if } d=2r+2.
\end{cases}
\]
\end{proposition}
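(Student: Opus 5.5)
Write $\mathfrak m=\langle x,y,z\rangle$ and $N=\binom{r+3}{2}$ for the number of minimal generators of $\mathfrak m^{r+1}$ (the monomials of degree $r+1$). The plan is to compute $\dim \HK_1(\mathfrak m^{r+1})_d$ as $\dim Z_d-\dim B_d$. Here $Z=\ker(\partial_1)$ is the syzygy module and $B=\mathrm{im}(\partial_2)$ is the module of Koszul syzygies, both inside $\K_1=R(-r-1)^N$.

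\textbf{The cycles $Z$.} Since $\mathfrak m^{r+1}$ agrees with $R$ in degrees $\ge r+1$ and is zero below, the exact sequence $0\to Z\to R(-r-1)^N\to \mathfrak m^{r+1}\to 0$ gives
\[
\dim Z_d=N\binom{d+1-r}{2}-\binom{d+2}{2}\quad\text{for } d\ge r+1,
\]
and $Z_d=0$ for $d\le r$.

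\textbf{Vanishing ranges.} For $d\le r$ the module $\K_1$ is itself zero, so $\HK_1$ vanishes. For $d\ge 2r+3$ I would argue as follows. The ideal $\mathfrak m^{r+1}$ has a linear resolution (Eliahou--Kervaire, or the Eagon--Northcott resolution of powers of the maximal ideal). Hence $\HK_1$ is generated in degrees $\le r+1+\reg$ data and vanishes beyond the top degree given by the standard fact $\HK_i(I)_j\cong\mathrm{Tor}$-type computations. More concretely, I would use \cite[Proposition~3.3]{Bruns-Conca-Romer-2011b}, which gives $\reg\HK_1\le 2\reg(I)=2r+2$, so $\HK_1(\mathfrak m^{r+1})_d=0$ for $d\ge 2r+3$.

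\textbf{Middle range $r+1\le d\le 2r+1$.} Here the Koszul boundaries $B=\partial_2(\K_2)$ live in degrees $\ge 2r+2$, because $\K_2=R(-2r-2)^{\binom N2}$. So $B_d=0$, and $\dim\HK_1=\dim Z_d$, which gives the second case of the formula.

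\textbf{The degree $d=2r+2$.} Now $B_{2r+2}$ is the image of $(\K_2)_{2r+2}\cong\RR^{\binom N2}$. The claimed formula amounts to the injectivity of $\partial_2$ in this degree. The kernel of $\partial_2$ in degree $2r+2$ equals $\HK_2$ in that degree plus the image of $\partial_3$. The latter lives in degrees $\ge 3r+3>2r+2$, so it vanishes. Thus I need $\HK_2(\mathfrak m^{r+1})_{2r+2}=0$.

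The main obstacle is this step. I would attack it with the duality of Proposition~\ref{prop:someKoszul}(1), taking $n=N$ and $m=3$ (the grade is $3$):
\[
\HK_2(\by)_{2r+2}\cong\HK_{N-5}(\by)_{N(r+1)-3-2r-2}.
\]
This is awkward, so I would try first the direct argument instead. A degree-$(2r+2)$ element of $\K_2$ is a constant combination $\sum c_{ij}e_i\wedge e_j$. Its image is $\sum c_{ij}(y_ie_j-y_je_i)$. The coefficient of $e_j$ is $\sum_i c_{ij}y_i$, up to sign. This is a linear combination of the distinct monomials $y_i$, so it vanishes only if all $c_{ij}=0$. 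Hence $\partial_2$ is injective in this degree, and that elementary argument suffices.

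As a consistency check, since $\HK_1$ vanishes from degree $2r+3$ on, I can also recompute the vanishing for large $d$ via an Euler characteristic count on the linear strands if the regularity citation seems insufficient.
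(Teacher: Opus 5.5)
Your proposal is correct and follows essentially the paper's proof. Like the paper, you compute the syzygy module from the short exact sequence, note that the Koszul boundaries start in degree $2r+2$, and show that the $\binom{\binom{r+3}{2}}{2}$ Koszul syzygies are linearly independent in degree $2r+2$ because the degree-$(r+1)$ monomials are linearly independent; your detour through $\HK_2$ and duality is unnecessary. The only difference is the vanishing for $d\ge 2r+3$: you combine $\reg\HK_1\le 2\reg(I)$ from \cite[Proposition~3.3]{Bruns-Conca-Romer-2011b} with $\reg(\langle x,y,z\rangle^{r+1})=r+1$ and the finite length of $\HK_1$, whereas the paper cites \cite[Corollary~3.7]{Bruns-Conca-Romer-2011} directly; both routes are valid.
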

\begin{proof}
Set  $R=\RR[x,y,z]$. Consider the short exact sequence
\[
0\to \syz(\langle x,y,z\rangle^{r+1})\to R(-r-1)^{\binom{r+3}{2}}\to \langle x,y,z\rangle^{r+1}\to 0.
\]
From this we deduce that
$
\dim \syz(\langle x,y,z\rangle^{r+1})_d=
\begin{cases}
0 & \text{if } d\le r\\
\binom{r+3}{2}\binom{d+1-r}{2}-\binom{d+2}{2} & \text{if } d\ge r+1
\end{cases}.
$

By \Cref{sec:KoszulConnection} (see \cref{eq.koszulkernel}), $\HK_1(\langle x,y,z\rangle^{r+1})$ is the quotient of $\syz(\langle x,y,z\rangle^{r+1})$ by the Koszul syzygies of the minimal monomial generators of $\langle x,y,z\rangle^{r+1}$, and the Koszul syzygies all have degree $2r+2$.  Thus the formula follows for $d\le 2r+1$. 

In degree $d=2r+2$, we claim the Koszul syzygies are linearly independent.  Since there are $\binom{\binom{r+3}{2}}{2}$ of them, the formula follows for $d=2r+2$ once we establish this claim.  Let $\lambda=\binom{r+3}{2}$ and order the degree $r+1$ monomials of $\R[x,y,z]$ lexicographically as  $m_1=x^{r+1},\ldots,m_\lambda=z^{r+1}$.  The Koszul syzygies are the syzygies $K_{ij}=m_jb_i-m_ib_j, 1\le i\le j\le \lambda$ in the free module $\bigoplus_{i=1}^\lambda R(-r-1)b_{i}$, where $b_{i}$ is a standard basis symbol corresponding to the monomial $m_i$.  Suppose to the contrary that there is a non-trivial linear dependency $\sum_{1\le i<j\le \lambda} c_{ij}K_{ij}=0$ where $c_{ij}\in \R$ for all $1\le i<j\le \lambda$.  Since the linear dependency is non-trivial, there are $s,t$ satisfying $1\le s<t\le \lambda$ so that $c_{st}\neq 0$.  Then $\sum_{i=1}^\lambda c_{si}m_ib_s=0$, implying that there is a linear dependency among the monomials of degree $r+1$.  This is a contradiction since the monomials of degree $r+1$ are linearly independent.  Thus the Koszul syzygies are linearly independent and the formula follows for $d=2r+2$.
The fact that $\dim \HK_1(\langle x,y,z\rangle^{r+1})_d=0$ for $d\ge 2r+3$ follows from \cite[Corollary~3.7]{Bruns-Conca-Romer-2011}.
\end{proof}

Now we describe the Hilbert function of the first Koszul homology of the sequence $\boldsymbol{\ell}^{\br}$.

\begin{corollary}
    \label{corollary: nonminimal first koszul}
    Let $n \geq p = \binom{r+3}{2}$    and $
    \boldsymbol{\ell}^{\boldsymbol{r}}
    = 
    (
    \ell_1^{r + 1},\ldots, \ell_{p}^{r + 1} , \ell_{p+1}^{r + 1} , \ldots , \ell_n^{r+ 1} 
    )$, where the first $p$ linear forms satisfy
    $
    \langle 
    \ell_1^{r+1}
    ,
    \ldots
    , 
    \ell_p^{r+1}
    \rangle 
    = 
    \langle 
    x,y,z
    \rangle^{r+1}
    $.
    We have a graded isomorphism  
    \begin{equation}
    \label{eq: nonminimal first Koszul homology}
    \HK_1(\boldsymbol{\ell}^{\boldsymbol{r}})
        \cong 
        \HK_1(\langle x,y,z \rangle^{r+1})
        \oplus
        \bigoplus_{i=p+1}^n
        \dfrac{R}{\langle x,y,z\rangle^{r+1}}(-r-1)
        \ .
    \end{equation}
    In particular, 
$
\dim \HK_1( \boldsymbol{\ell}^r )_d=
\begin{cases}
0 & \text{if } d\le r \text{ or } d\ge 2r+3\\
n \binom{d + 1 - r}{2} 
- \binom{d + 2}{2}  & \text{if } r+1\le d\le 2r+1\\
\binom{r+3}{2}\binom{d+1-r}{2}-\binom{d+2}{2}-\binom{\binom{r+3}{2}}{2} & \text{if } d=2r+2.
\end{cases}
$
\end{corollary}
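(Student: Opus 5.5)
The graded isomorphism is a direct application of \Cref{prop: first Koszul homology nonminimal}. Take $\by=\bm{\ell}^{\br}$ with all $\deg(y_i)=r+1$, i.e.\ $r_i=r$, and $\by'=(\ell_1^{r+1},\ldots,\ell_p^{r+1})$. By hypothesis $\langle\by\rangle=\langle\by'\rangle=\langle x,y,z\rangle^{r+1}$, so the proposition gives
\[
\HK_1(\bm{\ell}^{\br})\cong \HK_1(\by')\oplus\bigoplus_{i=p+1}^n \frac{R}{\langle x,y,z\rangle^{r+1}}(-r-1).
\]

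It remains to identify $\HK_1(\by')$ with $\HK_1(\langle x,y,z\rangle^{r+1})$. The sequence $\by'$ consists of $p=\binom{r+3}{2}=\dim R_{r+1}$ elements of degree $r+1$ generating $\langle x,y,z\rangle^{r+1}$. These elements therefore span $R_{r+1}$, hence form a basis of it and a minimal generating set of the ideal. Since Koszul homology of a minimal generating set is independent of the choice up to isomorphism (recalled in \Cref{sec:KoszulConnection}), we get $\HK_1(\by')\cong\HK_1(\langle x,y,z\rangle^{r+1})$. Concretely, the invertible linear change of basis from $\by'$ to the monomial basis induces an isomorphism of Koszul complexes.

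For the Hilbert function, I add the Hilbert function of \Cref{p:Koszul-homology-powers} to $(n-p)$ times that of $(R/\langle x,y,z\rangle^{r+1})(-r-1)$. The latter has dimension $\binom{d-r+1}{2}$ in degree $d$ when $r+1\le d\le 2r+1$, and dimension $0$ otherwise.

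For $d\le r$ and $d\ge 2r+3$ both summands vanish; for $d=2r+2$ the shifted quotient also vanishes, leaving the value from \Cref{p:Koszul-homology-powers}. For $r+1\le d\le 2r+1$ we have
\[
\binom{r+3}{2}\binom{d+1-r}{2}-\binom{d+2}{2}+(n-p)\binom{d+1-r}{2}=n\binom{d+1-r}{2}-\binom{d+2}{2}.
\]

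The only point needing care is the identification of $\HK_1(\by')$ with $\HK_1$ of the ideal, which the minimality argument settles; the rest is bookkeeping of degree ranges.
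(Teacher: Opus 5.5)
Your proposal is correct and follows the paper's proof: you apply \Cref{prop: first Koszul homology nonminimal} with $\by'=(\ell_1^{r+1},\ldots,\ell_p^{r+1})$, then add the Hilbert function from \Cref{p:Koszul-homology-powers} to that of the $n-p$ shifted copies of $R/\langle x,y,z\rangle^{r+1}$. You also check explicitly that $\by'$ is a basis of $R_{r+1}$, so $\HK_1(\by')\cong\HK_1(\langle x,y,z\rangle^{r+1})$; this fills in a step the paper leaves implicit.
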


\begin{proof}
Since $I = \langle \ell_1^{r + 1},\ldots, \ell_{p}^{r + 1} , \ell_{p+1}^{r + 1} , \ldots , \ell_n^{r+ 1} \rangle = \langle x,y,z \rangle^{r+1}$, 
the graded isomorphism \cref{eq: nonminimal first Koszul homology} follows from \cref{prop: first Koszul homology nonminimal}.  The dimension formula now follows by applying \Cref{p:Koszul-homology-powers} and simplifying.
\end{proof}

We can now combine the above results to derive a formula
for the dimensions of the spline space under some mild additional hypotheses.
\begin{theorem}
\label{theorem: spline dimension many planes}
Suppose $\A$ is a generic central hyperplane arrangement in $\RR^3$ of $n$ hyperplanes and moreover we have:
 \begin{enumerate}[label=({\arabic*})]
    \item (Constant smoothness distribution) $\br(H) = r$ for every $H\in\mathscr{A}$, and 
    \item (Power of irrelevant ideal) $\langle \boldsymbol{\ell}^{\br} \rangle = \langle x,y,z \rangle^{r+1}$, and 
    thus $n \geq \binom{r+3}{2}$.
\end{enumerate}
If $\Sigma^\A$ is the fan of $\A$, 
then for all $d\ge 0$ the dimension of the spline space $\calS^{\br}_d(\Sigma^\A)$ is 
\begin{equation*}
\dim\calS^{r}_d(\Sigma^\A)
=
\binom{d+2}{2}
+
2
\binom{n}{2}\binom{d-2r}{2}
+
\begin{cases}
0 & \text{if } d\le r \\[6pt]
n \binom{d + 1 - r}{2}
  & \text{if }r+1\le d\le 2r+1\\[6pt]
\binom{r+3}{2}\binom{d+1-r}{2}-\dbinom{\binom{r+3}{2}}{2} 
& \text{if } d=2r+2\\[6pt]
\binom{d+2}{2} & \text{if } d\ge 2r+3.
\end{cases}
\end{equation*}
\end{theorem}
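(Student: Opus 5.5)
The plan is to substitute the Koszul data into the generic formula of \Cref{thm.SplineGenericArrangement}. With $\br(H)=r$ for every $H$, that formula reads
\[
\dim\calS^{r}_d(\Sigma^\A)=2\binom{d+2}{2}+2\binom{n}{2}\binom{d-2r}{2}+\dim \HK_1(\bm{\ell}^{\br})_d-\dim \HK_0(\bm{\ell}^{\br})_d .
\]
Here I read the sum over $H_1\neq H_2$ as a sum over unordered pairs; that is how the coefficient $2$ was produced in \cref{eq:genericCancel}, since each pair contributes two rays. So the remaining work is to insert the Hilbert functions of $\HK_0$ and $\HK_1$ and simplify in each degree range.

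For $\HK_0$, hypothesis (2) gives $\HK_0(\bm{\ell}^{\br})=R/\langle x,y,z\rangle^{r+1}$. Its Hilbert function is $\binom{d+2}{2}$ for $d\le r$ and $0$ for $d>r$, as recorded before \Cref{p:Koszul-homology-powers}.

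For $\HK_1$, I would first justify the ordering hypothesis of \Cref{corollary: nonminimal first koszul}. Since $\langle\bm{\ell}^{\br}\rangle=\langle x,y,z\rangle^{r+1}$ and all generators have degree $r+1$, the powers $\ell_i^{r+1}$ span $R_{r+1}$. Hence some $p=\binom{r+3}{2}$ of them form a basis, and after reindexing these are $\ell_1^{r+1},\ldots,\ell_p^{r+1}$, generating $\langle x,y,z\rangle^{r+1}$ minimally. The corollary then supplies $\dim\HK_1(\bm{\ell}^{\br})_d$ in every degree.

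The rest is a case check:
\begin{itemize}
\item For $d\le r$: $\HK_1$ vanishes and $\HK_0$ contributes $-\binom{d+2}{2}$, so the total is $\binom{d+2}{2}+2\binom{n}{2}\binom{d-2r}{2}$. This matches the statement with added term $0$.
\item For $r+1\le d\le 2r+1$: $\HK_0$ vanishes and $\HK_1$ contributes $n\binom{d+1-r}{2}-\binom{d+2}{2}$. The $-\binom{d+2}{2}$ cancels one copy of $2\binom{d+2}{2}$, giving the stated $n\binom{d+1-r}{2}$.
\item For $d=2r+2$: the same cancellation leaves $\binom{r+3}{2}\binom{d+1-r}{2}-\binom{\binom{r+3}{2}}{2}$.
\item For $d\ge 2r+3$: both Koszul terms vanish, leaving the full $2\binom{d+2}{2}$, i.e.\ the extra $\binom{d+2}{2}$ in the last case.
\end{itemize}

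The only step needing care is the reindexing that makes \Cref{corollary: nonminimal first koszul} applicable, together with confirming its case formula. Beyond the identification $\HK_1(\bm{\ell}^{\br})\cong \HK_1(\langle x,y,z\rangle^{r+1})\oplus (R/\langle x,y,z\rangle^{r+1})(-r-1)^{n-p}$ from \Cref{prop: first Koszul homology nonminimal}, I would check one thing. The shifted summands are nonzero only in degrees $r+1\le d\le 2r+1$, where they contribute $(n-p)\binom{d+1-r}{2}$; this changes the coefficient $p$ to $n$ in the middle range. Everything else is bookkeeping.
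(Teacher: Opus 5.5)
Your proposal is correct and follows the same route as the paper. The paper's proof is the one-line combination of \Cref{thm.SplineGenericArrangement} with \Cref{corollary: nonminimal first koszul}, and your extra steps supply the bookkeeping it leaves implicit: the reindexing, where you choose a basis of $R_{r+1}$ among the powers $\ell_i^{r+1}$ so the corollary applies, and the degree-by-degree cancellation against $2\binom{d+2}{2}$.
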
 

\begin{proof}
The formula follows from \cref{thm.SplineGenericArrangement} and \cref{corollary: nonminimal first koszul}.
\end{proof}

\section{$C^0$ and $C^1$ trivariate splines on the fan of a generic arrangement}\label{sec:C0C1}

In this section we record dimension formulas for $\dim \calS^0_d(\Sigma^\A)$ and $\dim \calS^1_d(\Sigma^\A)$ when $\A$ is a generic hyperplane arrangement.  These two cases are particularly interesting as they are used in applications such as the finite element method \cite{Strang,Homology}.

\begin{theorem}\label{thm:C0}
Let $\A$ be a generic hyperplane arrangement 
in $\RR^3$ with $n\ge 3$ hyperplanes.  Then $\dim \calS^0_0(\Sigma^\A)=1, \dim \calS^0_1(\Sigma^\A)=n+3,$ and 
\[
\dim \calS^0_d(\Sigma^\A)=2\binom{d+2}{2}+2\binom{n}{2}\binom{d}{2} \quad \text{ for } d\ge 2.
\]
\end{theorem}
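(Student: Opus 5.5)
Since $\A$ is generic and $\br=\bm{0}$, every generator of $\langle\bm{\ell}^{\bm 0}\rangle$ is a linear form $\ell_H$, so $\langle \bm{\ell}^{\bm 0}\rangle=\langle x,y,z\rangle$: $\A$ is essential with $n\ge 3$, so three of the forms are linearly independent. We are therefore in the setting of \Cref{theorem: spline dimension many planes} with $r=0$; condition (2) holds and $n\ge 3=\binom{r+3}{2}$. The plan is to specialize that formula at $r=0$ and simplify case by case. As a cross-check, one can instead start from the generic formula in \Cref{thm.SplineGenericArrangement} and compute the Koszul terms directly via \Cref{corollary: nonminimal first koszul}.

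For $r=0$ the four cases of \Cref{theorem: spline dimension many planes} are $d\le 0$, $d=1$, $d=2$, and $d\ge 3$. For $d=0$ the answer is $\binom{2}{2}+0+0=1$. For $d=1$, the term $\binom{d-2r}{2}=\binom{1}{2}$ vanishes and the case contribution is $n\binom{2}{2}=n$, giving $\binom{3}{2}+n=n+3$. For $d\ge 3$ the formula reads $2\binom{d+2}{2}+2\binom{n}{2}\binom{d}{2}$, which is the claim.

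The only step needing care is $d=2$. Here the formula gives $\binom{4}{2}+2\binom{n}{2}+\bigl[\binom{3}{2}\binom{3}{2}-\binom{3}{2}\bigr]=6+2\binom{n}{2}+6$. The target is $2\binom{4}{2}+2\binom{n}{2}\binom{2}{2}=12+2\binom{n}{2}$, so the two agree. Equivalently, \Cref{corollary: nonminimal first koszul} gives $\dim\HK_1(\bm{\ell}^{\bm 0})_2=3\cdot 3-6-3=0$ and $\HK_0$ vanishes in degree $2$, so the Koszul correction disappears exactly at $d=2$.

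I expect no real obstacle; it is bookkeeping of the binomial conventions at the boundary degrees. The one point to check is that $\HK_1(\bm{\ell}^{\bm 0})$ lives only in degree $1$, where it has dimension $n\binom{2}{2}-3=n-3$. Combined with $\dim\HK_0(\bm{\ell}^{\bm 0})_1=0$, this reproduces $2\cdot 3+0+(n-3)=n+3$ at $d=1$, which is consistent with the direct computation above.
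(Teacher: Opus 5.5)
Your proposal is correct and follows the same route as the paper. The paper observes that genericity with $n\ge 3$ gives $\langle \ell_H : H\in\A\rangle=\langle x,y,z\rangle$, specializes \Cref{theorem: spline dimension many planes} to $r=0$, and simplifies. Your case-by-case arithmetic is accurate, including the $d=2$ value $9-3=6$ and the cross-check $\dim\HK_1(\bm{\ell}^{\bm 0})_1=n-3$.
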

\begin{proof}
Since $\A$ has at least three hyperplanes and is generic, the ideal $\langle \ell_H:H\in\A\rangle$ is the maximal ideal of 
$\RR[x,y,z]$.  Thus we may apply \Cref{theorem: spline dimension many planes} with $r=0$.  Simplifying yields the statement of the theorem.
\end{proof}

If $\A$ is a generic hyperplane arrangement with $n$ hyperplanes, then $\dim \calS^1_d(\Sigma^\A)$ depends only on $n$ and $d$ if $n\le 6$ or $d\neq 4$.  If $n\ge 7$, then  there are two cases for $\dim \calS^1_4(\Sigma^\A)$ depending on the geometry of the hyperplanes.  We record the details in the following theorem.

\begin{theorem}\label{thm:C1}
Let $\A$ be a generic hyperplane arrangement with $n\ge 3$ hyperplanes and $\mathcal{X}=\{P_1,\ldots,P_n\}\subset \mathbb{P}^2(\RR)$ the set of points dual to the linear forms defining $\A$.  Then
\[
\dim \calS^1_d(\Sigma^\A)
=
2\binom{d+2}{2}
+
2\binom{n}{2}\binom{d-2}{2}
+
\eta_d(\A),
\]
where \(\eta_d(\A)=0\) for \(d\ge 6\),
\[
\eta_0(\A)=-1,\qquad
\eta_1(\A)=-3,\qquad
\eta_2(\A)=n-6,\qquad
\eta_3(\A)=3n-10,
\]
\[
\eta_4(\A)=A_n+\delta_n(\A),\qquad
\eta_5(\A)=\epsilon_n,
\]
with \(A_3=0\), \(A_4=3\), \(A_5=5\), and \(A_n=6\) for \(n\ge6\); 
\(\epsilon_4=1\) and \(\epsilon_n=0\) for \(n\neq4\); and
\(\delta_n(\A)=n-6\) if \(n\ge6\) and \(\mathcal X\) lies on a conic, and \(0\) otherwise.
\end{theorem}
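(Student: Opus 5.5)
The plan is to start from \Cref{thm.SplineGenericArrangement} with $\br\equiv 1$. Since $\A$ is generic, $\binom{d-\br(H_1)-\br(H_2)}{2}=\binom{d-2}{2}$ for every pair, so the statement reduces to
\[
\eta_d(\A)=\dim\HK_1(\bm{\ell}^{\bm 1})_d-\dim(R/I)_d,\qquad I=\langle \ell_1^2,\ldots,\ell_n^2\rangle .
\]
Write $V=\bigoplus_i \RR e_i$. Let $\varphi\colon V\to R_2$ be the map $e_i\mapsto \ell_i^2$, let $W=\operatorname{im}\varphi$, and let $K=\ker\varphi$, the space of linear dependencies among the squares. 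Put $k=\dim K=n-\dim W$. By apolarity (\Cref{thm:Apolarity}), $\dim R_2-\dim W$ is the number of independent conics through $\mathcal X$. Since no three points of $\mathcal X$ are collinear, this gives $\dim W=\min(n,6)$ unless $n\ge 6$ and $\mathcal X$ lies on a conic, in which case $\dim W=5$. Hence $k=\max(n-6,0)$ in general, and $k=n-5$ in the conic case.

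\emph{Degrees $d\le 3$.} The Koszul syzygies $\ell_j^2e_i-\ell_i^2e_j$ live in degree $4$, so for $d\le3$ we have $\HK_1(\bm\ell^{\bm1})_d=\syz_d$. Using $0\to\syz\to R(-2)^n\to I\to 0$ we get $\dim\syz_d=n\dim R_{d-2}-\dim I_d$. Therefore
\[
\eta_d=n\binom{d}{2}-\binom{d+2}{2},
\]
which gives $-1,-3,n-6,3n-10$ for $d=0,1,2,3$.

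\emph{Degree $d=4$.} Here $\dim\HK_1(\bm\ell^{\bm1})_4=\dim\syz_4-\dim(\operatorname{im}\partial_2)_4$. The first term is $\dim\syz_4=6n-\dim I_4$, so $\dim\syz_4-\dim(R/I)_4=6n-15$. For the second term, note that $(\K_2)_4=\bigwedge^2V$ consists of constant coefficients, and boundaries from $\K_3$ start only in degree $6$. Hence
\[
\dim(\operatorname{im}\partial_2)_4=\binom n2-\dim Z_2,\qquad Z_2=\ker\Bigl(\textstyle\bigwedge^2V\to V\otimes W\Bigr).
\]
Writing an element as a skew matrix $C=(c_{ij})$, the condition $\partial_2C=0$ says exactly that every row of $C$ lies in $K$. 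A skew matrix whose row space lies in $K$ has its column space in $K$ as well, so $Z_2=\bigwedge^2K$ and $\dim Z_2=\binom k2$. This is the standard fact that the kernel of $\bigwedge^jV\to\bigwedge^{j-1}V\otimes W$, induced by the surjection $V\to W$, is $\bigwedge^jK$. Combining,
\[
\eta_4=6n-15-\binom n2+\binom k2 .
\]
Substituting the values of $k$ gives $0,3,5$ for $n=3,4,5$. For $n\ge6$ with $\mathcal X$ off a conic it gives $6n-15-\binom n2+\binom{n-6}{2}=6$, and in the conic case it gives $6+(n-6)$.

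\emph{Degrees $d\ge5$.} By \Cref{prop:regularityBoundsPowerIdeals}(1), $\reg(R/I)\le3$, so $(R/I)_d=0$ for $d\ge4$. For $\HK_1$ the plan is to use the duality of \Cref{prop:someKoszul}(1), where $\overline r=2n$ and $m=3$:
\[
\HK_1(\bm\ell^{\bm1})_d\cong\HK_{n-4}(\bm\ell^{\bm1})_{2n-3-d}.
\]
Since $\K_{n-4}$ is generated in degree $2n-8$, the right-hand side vanishes when $d\ge6$, so $\eta_d=0$ there. For $d=5$ the right-hand side is $\HK_{n-4}$ in degree $2n-8$. For $n=3$ this is $0$. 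For $n=4$ it is $\HK_0(\bm\ell^{\bm1})_0=\RR$, so $\epsilon_4=1$. For $n=5$ it is $\HK_1(\bm\ell^{\bm1})_2=0$ by the computation at $d=2$, because the five squares are independent. For $n\ge6$, degree $2n-8$ is the lowest degree of $\K_{n-4}$, so $\HK_{n-4}$ there equals the cycles, and by the same kernel fact these form $\bigwedge^{n-4}K$. This vanishes because $n-4>n-5\ge k$.

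I expect the main obstacle to be the identification of constant-coefficient Koszul cycles with $\bigwedge^jK$, which drives both the $\delta_n$ term at $d=4$ and the vanishing of $\epsilon_n$ for $n\ge 6$. I would prove it by choosing a basis of $V$ adapted to a splitting $V=K\oplus W'$ with $W'\cong W$. Then $\partial$ becomes contraction against a basis of $W'$, whose kernel on $\bigwedge^jV=\bigoplus_s\bigwedge^sK\otimes\bigwedge^{j-s}W'$ is $\bigwedge^jK$, since contraction $\bigwedge^tW'\to\bigwedge^{t-1}W'\otimes W'$ is injective for $t\ge1$.
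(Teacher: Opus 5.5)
Your proof is correct, and it is organized differently from the paper's.

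The paper argues case by case:
\begin{itemize}
\item $n=3,4$ follow from \Cref{thm:threefourHyperplanes};
\item $n=5$ follows from \Cref{thm:fiveHyperplanes} together with the Hilbert function $1,3,1$ of $R/\langle \ell_1^2,\ldots,\ell_5^2\rangle$;
\item $n\ge 6$ with $\mathcal X$ on a conic is reduced to $n=5$ via the splitting of \Cref{prop: first Koszul homology nonminimal};
\item $n\ge 6$ with $\mathcal X$ off a conic is the case $r=1$ of \Cref{theorem: spline dimension many planes}, which uses the Koszul homology of $\langle x,y,z\rangle^{r+1}$ and the vanishing theorem of Bruns--Conca--R\"omer.
\end{itemize}

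You instead treat all $n$ uniformly, degree by degree. Below degree $4$ there are no Koszul boundaries. In degree $4$ the only correction to the Euler-characteristic count is $\dim \HK_2(\bm{\ell}^{\bm 1})_4=\dim\bigwedge^2K=\binom{k}{2}$. For $d\ge 5$, the duality of \Cref{prop:someKoszul}(1) reduces everything either to the lowest-degree cycles of $\K_{n-4}$, namely $\bigwedge^{n-4}K=0$, or, when $n\le 5$, to pieces you have already computed.

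What your route buys is transparency and self-containment. The only geometric input is $k$, the number of independent linear relations among the $\ell_i^2$. So $\delta_n(\A)=\binom{n-5}{2}-\binom{n-6}{2}=n-6$ is visibly the effect of the conic, and you need neither fat-point computations nor the external vanishing result. The one fact you state without proof is that $\dim W=\min(n,6)$, respectively $5$ in the conic case. This is the B\'ezout argument the paper gives for $n=5$, plus uniqueness of a conic through five points with no three collinear; you should include that short argument.

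The paper's route, in exchange, rests on results that also apply to other smoothness distributions, whereas yours is tailored to $r=1$. For constant $r$, your two mechanisms control only $d\le 2r+2$ and $d\ge 4r+1$. Your argument closes precisely because these two windows exhaust all degrees when $r=1$. The uncovered range $2r+3\le d\le 4r$ contains exactly the degrees ($7$ and $8$ for $r=2$) where the two arrangements of \Cref{ex:sixhyperplanes} differ.
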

\begin{proof}
For $n=3$ and $4$, the theorem follows from a straightforward application of \Cref{thm:threefourHyperplanes}.  For $n=5$, we proceed via \Cref{thm:fiveHyperplanes}.  We have
\begin{center}
    \renewcommand{\arraystretch}{1.5}
    \begin{tabular}{c| c c c c c c c c c}
       $d$  &  0 &1&2&3&4&5&6&7&$\ge 8$\\
       \hline
       
$\sum_{A\subseteq\A} (-1)^{|A|}\binom{d+2-|A|-\br(A)}{2}$&1&3&1&$-5$&$-5$&1&3&1&$0$ 
    \end{tabular}.
    \end{center}
The final remaining term in \Cref{thm:fiveHyperplanes} is $\dim (R/\langle \bm{\ell}^\br\rangle)_{\br(\A)+|\A|-3-d}=\dim (R/\langle \bm{\ell}^1\rangle)_{7-d}$.  In this case, 
$\langle \bm{\ell}^1\rangle =\langle \ell_1^2,\ell_2^2,\ell_3^2,\ell_4^2,\ell_5^2\rangle$, where $\ell_1,\ldots,\ell_5$ 
are the five linear forms defining the hyperplanes of $\A$.  We claim that $\{\ell_i^2\}_{i=1}^5$ must be linearly independent.  By \Cref{thm:Apolarity}, $\dim(R/\langle \bm{\ell}^1\rangle)_2=\dim(\mathfrak{p}_1\cap\cdots\cap \mathfrak{p}_5)_2$, where $\mathfrak{p}_i$ is the ideal of the dual point to $\ell_i$ (in $\mathbb{P}^2(\RR)$).  The condition that no three hyperplanes of $\A$ intersect in a line translates to the fact that no three of the dual points lie on a line.  Suppose $\{\ell_i^2\}_{i=1}^5$ are linearly dependent.  Then $\dim(\mathfrak{p}_1\cap\cdots\cap \mathfrak{p}_5)_2=\dim(R/\langle\bm{\ell}^1\rangle)_2\ge 2$ so there are two linearly independent quadrics $Q_1,Q_2$ that vanish on the five dual points.  
If either $Q_1$ or $Q_2$ is reducible, then it must be a product of two linear factors.  At least one of these linear factors must vanish on three of the dual points, contradicting the fact that $\A$ is generic.  So $Q_1$ and $Q_2$ are both irreducible.  By Bezout's theorem, they intersect in four points, contradicting the fact that we have an ideal of five points.  Hence $\{\ell_i^2\}_{i=1}^5$ are linearly independent. 
The graded pieces of $R/\langle \ell_i^2: 1\le i\le 5\rangle$ 
vanish outside degrees $0,1,2$, with values $1,3,1$ at 
$d=0,1,2$ respectively; consequently 
$\dim(R/\langle \ell_i^2: 1\le i\le 5\rangle)_{7-d}$ 
vanishes outside $d\in\{5,6,7\}$, and equals $1,3,1$ 
for $d=5,6,7$ respectively.
Thus, by \Cref{thm:fiveHyperplanes},
\begin{equation}\label{eq:5generic}
\dim \calS^1_d(\Sigma^\A)=2\tbinom{d+2}{2}+20\tbinom{d-2}{2}+\rho_d,
\end{equation}
where $\rho_0=-1$, $\rho_1=-3$, $\rho_2=-1$, $\rho_3=\rho_4=5$, 
and $\rho_d=0$ for $d\ge 5$.

Now suppose $n\ge 6$ and $\mathcal{X}$ does lie on a conic.  Put $I=\langle \bm{\ell}^{\bm 1}\rangle=\langle \ell_i^2: 1\le i\le n\rangle$.  Since $\mathcal{X}$ lies on a conic, we may assume without loss of generality that $I=\langle \ell_i^2: 1\le i\le 5\rangle$.  By the same argument that we gave in the $n=5$ case, $\{\ell^2_i\}_{i=1}^5$ must be a linearly independent set.  Let $\bm{\ell}'=\{\ell_i\}_{i=1}^5$. By \Cref{prop: first Koszul homology nonminimal},
$
\HK_1(\bm{\ell}^{\bm 1})\cong \HK_1((\bm{\ell}')^{\bm 1})\oplus \bigoplus\limits_{i=6}^n \dfrac{R}{I}(-2).
$
Furthermore $\HK_0(\bm{\ell}^{\bm 1})=R/I=\HK_0((\bm{\ell}')^{\bm 1})$. Thus
\[
\dim \HK_1(\bm{\ell}^{\bm 1})_d-\dim \HK_0(\bm{\ell}^{\bm 1})_d=(n-5)\dim\Big(\frac{R}{I}\Big)_{d-2}+\dim\HK_1((\bm{\ell}')^{\bm 1})_d-\dim\HK_0((\bm{\ell}')^{\bm 1})_d.
\]
The expression $\rho_d$ in \Cref{eq:5generic} is 
the formula for $\dim\HK_1((\bm{\ell}')^{\bm 1})_d-\dim\HK_0
((\bm{\ell}')^{\bm 1})_d$. 
Using the Hilbert function of $R/I$ computed above (in the $n=5$ case), a direct calculation gives 
$(n-5)\dim(R/I)_{d-2}=0$ for $d\le 1$ or $d\ge 5$, and equals 
$n-5$, $3(n-5)$, $n-5$ for $d=2,3,4$ respectively. 
Adding to \Cref{eq:5generic} yields
\[
\dim \calS^1_d(\Sigma^\A)=2\tbinom{d+2}{2}+2\tbinom{n}{2}
\tbinom{d-2}{2}+\eta_d(\A),
\]
where $\eta_0(\A)=-1$, $\eta_1(\A)=-3$, $\eta_2(\A)=n-6$, 
$\eta_3(\A)=3n-10$, $\eta_4(\A)=n$, and $\eta_d(\A)=0$ 
for $d\ge 5$.

If $n\ge 6$ and $\mathcal{X}$ does not lie on a conic, then, as a consequence of \cref{thm:Apolarity}, $\langle \bm{\ell}^\br\rangle=\langle x,y,z\rangle^2$.  The theorem then follows from \Cref{theorem: spline dimension many planes}.
\end{proof}

\section*{Acknowledgments}  This project began
at the ``Workshop on the Applications of Commutative Algebra'' held at the Fields Institute
in Toronto, Canada in May 2025.  All the authors would like
to thank the organizers, Elisa Gorla, Mateusz Micha\l{}ek, and  Hal Schenck,  and 
the Fields Institute for both its hospitality and financial support. 
We also thank Laura Casabella, Anna Natalie Chlopecki, and Hasan Mahmood who participated with our group at Fields, and we thank Claudiu Raicu for suggesting the reference \cite{Bruns-Conca-Romer-2011} that was used in \cref{p:Koszul-homology-powers}. We also thank Peter Alfeld for comments on an early draft.

Checa's research is funded by the European Union under the Grant Agreement number 101044561, POSALG \footnote{Views and opinions expressed do not necessarily reflect those of the European Union or European Research Council (ERC)}.  DiPasquale is partially supported by NSF grant DMS–2344588, and
	he thanks the Fields Institute for funding from the  ``Fields Opportunities for Collaboration - US (FOCUS)" to collaborate with Van Tuyl at McMaster University in early 2026. Nguy$\tilde{\text{\^e}}$n acknowledges support from the AMS-Simons Travel Grant. Van Tuyl’s research is supported by NSERC Discovery Grant 2024-05299.

\appendix

\section{Examples}
\label{sec:Examples}

\begin{example}\label{ex:A3HP}
The braid arrangement $A_3$ is a three-dimensional reflection arrangement.  It consists of the six hyperplanes $H_1=H_x,H_2=H_y,H_3=H_z,H_4=H_{x-y},H_5=H_{x-z},$ and $H_6=H_{y-z}$.  The $A_3$ arrangement is \textit{not} generic since there are four lines along which three hyperplanes intersect: for instance, $H_1,H_2,$ and $H_4$ intersect along the line $x=y=0$.  The fan $\Sigma^{A_3}$ has $24$ three-dimensional cones, $36$ two-dimensional cones, and $14$ rays.  Each of the hyperplanes $H_1,\ldots,H_6$ is a union of six two-dimensional cones of $\Sigma^{A_3}_2$.  Each line in the intersection lattice of $A_3$ is a union of two rays of $\Sigma^{A_3}_1$.

We consider the smoothness distribution $\br(H_1)=\br(H_2)=\br(H_5)=\br(H_6)=6$ and $\br(H_3)=\br(H_4)=5$.  For this distribution, 
\begin{equation}\label{eq:facecontributions}
\sum_{\tau\in\Sigma^{A_3}_2} \dim J(\tau)=12\binom{d+2-6}{2}+24\binom{d+2-7}{2}=12\binom{d-4}{2}+24\binom{d-5}{2}.
\end{equation}

The ideals $J(\tau)$, $\tau\in\Sigma^{A_3}_{1}$, come in a few different types.  There are three lines in $\calL_1(A_3)$ which are the intersection of two hyperplanes: $H_1\cap H_6,H_2\cap H_5,$ and $H_3\cap H_4$.  Each of these contribute two rays to $\Sigma^{A_3}_1$.  The ideals of the two rays in $H_1\cap H_6$ have the Hilbert function
\begin{equation}\label{eq:Koszul1}
\dim \langle x^7,(y-z)^7\rangle_d=2\binom{d-5}{2}-\binom{d-12}{2}.
\end{equation}

The ideals of the two rays in $H_2\cap H_5$ also have the Hilbert function in \cref{eq:Koszul1}, while the ideals of the two rays in $H_3\cap H_4$ have the Hilbert function
\begin{equation}\label{eq:Koszul2}
\dim \langle z^6,(x-y)^6\rangle_d=2\binom{d-4}{2}-\binom{d-10}{2}.
\end{equation}
There are four other lines in the intersection lattice $\calL_1(A_3)$; each of these is the intersection of three planes.  For example, consider the intersection $H_1\cap H_2\cap H_4$.  The ideal of both rays in this intersection has the form
$
\langle x^7,y^7,(x-y)^6\rangle.
$
According to \cite[Theorem~2.7]{FatPoints}, this ideal has a minimal free resolution of the form
\[
R(-7)^2\oplus R(-6)\leftarrow R(-10)^2\leftarrow 0.
\]
Thus
\begin{equation}\label{eq:tripleints}
\dim \langle x^7,y^7,(x-y)^6\rangle_d=\binom{d-4}{2}+2\binom{d-5}{2}-2\binom{d-8}{10}.
\end{equation}
Putting together Equations \cref{eq:facecontributions}, \cref{eq:Koszul1}, \cref{eq:facecontributions}, \cref{eq:Koszul2}, and \cref{eq:tripleints}, we have
\begin{align*}
 \sum_{\tau\in\Sigma^{A_3}_2}\dim J(\tau)_d&-\sum_{\gamma\in\Sigma^{A_3}_1}\dim J(\gamma)_d= 12\binom{d-4}{2}+24\binom{d-5}{2}
 -4\left(2\binom{d-5}{2}-\binom{d-12}{2}\right) \\
 &-2\left(2\binom{d-4}{2}-\binom{d-10}{2}\right)-8\left(\binom{d-4}{2}+2\binom{d-5}{2}-2\binom{d-8}{2}\right)\\
 =& 16\binom{d-8}{2}+2\binom{d-10}{2}+4\binom{d-12}{2}.
\end{align*}
By \Cref{thm.SplineGenericArrangement},
\[
\dim \calS^\br(\Sigma^{A_3})=2\binom{d+2}{2}+16\binom{d-8}{2}+2\binom{d-10}{2}+4\binom{d-12}{2}=12d^2-204d+1000
\]

for $d\gg 0$.  A computation in Macaulay2~\cite{M2} shows that $\calS^\br(\Sigma^{A_3})$ is actually free with

\[
\calS^{\br}(\Sigma^{A_3})\cong R\oplus R(-6)^2\oplus R(-7)^4\oplus R(-10)^8\oplus R(-12)\oplus R(-13)^4\oplus R(-14)^4.
\]
\end{example}

\begin{example}\label{ex:A3reg}
Consider the $A_3$ arrangement with smoothness distribution as in \Cref{ex:A3HP}.  For ease of reference, $H_1=H_x,H_2=H_y,H_3=H_z,H_4=H_{x-y},H_5=H_{x-z},$ and $H_6=H_{y-z}$ with $\br(H_i)=6$ for $i=1,2,5,6$ and $\br(H_i)=5$ for $i=3,4$.  Observe that the hyperplanes $H_1,H_3,H_4,H_6$ defined by linear forms $x,z,x-y,y-z$ are linearly independent and every subset of size three of these is also linearly independent.  
Applying part (2) of \Cref{maintheoremUsingRegularity} and the result of \Cref{ex:A3HP}, we deduce that
$
\dim \calS^\br_d(\Sigma^{A_3})=12d^2-204d+1000
$
for $d\ge 25$.  In \Cref{ex:A3HP} we observed that Macaulay2 calculations imply that $\calS^\br(\Sigma^{A_3})$ is a free $R$-module and the maximum degree of a generator is $14$.  Therefore the above formula actually holds for $d\ge 12$.
\end{example}

For the next example, we recall the algorithm of Catalisano from~\cite{Catalisano-1991}.

\begin{theorem}[{\cite[Theorem~3.1]{Catalisano-1991}}]
\label{thm:catalisanoAlgorithm}
Assume that the points $P_1,\dots,P_n$ for $n \geq 2$ lie on a nonsingular conic and let $m_1 \geq \dots \geq m_n \geq 0$. Consider the ideals
$J = \bigcap_{i = 1}^n \mathfrak{p}_i^{m_i}$
and $J' \coloneqq \bigcap_{i = 1}^n \mathfrak{p}_i^{m_i'}$ where the $m'_i$ are defined as:
$$m_i' = \begin{cases}
    m_i & \sum_{i = 1}^n m_i \leq 2m_1 + 2m_2 - 1, \; i \geq 3\\
    \max\{m_i-1,0\} & \text{ otherwise.}
\end{cases}.$$
Let $t =\max\{m_1 + m_2 - 1, \ceil{\frac{\sum_{i = 1}^nm_i}{2}}\}$ and $\delta = \sum_{i = 1}^n\frac{m_i(m_i - 1)}{2}$. Then,
$$\dim (S/J)_d = \begin{cases}
    \delta & d \geq t \\
    2n + 1 + \dim (S/J')_{d-2} & 0 \leq d < t, \: \sum_{i = 1}^n m_i \geq 2m_1 + 2m_2 \\
    n + 1 + \dim (S/J')_{d-1} & 0 \leq d < t, \: \sum_{i = 1}^n m_i < 2m_1 + 2m_2.
\end{cases}$$
\end{theorem}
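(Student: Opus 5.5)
The plan is to prove this by induction on $\sum_i m_i$, using residual exact sequences with respect to a curve through the points. In one case the curve is the conic $C=V(Q)$ carrying $P_1,\dots,P_n$; in the other it is the line $L=V(\lambda)$ through the two points of largest multiplicity, $P_1,P_2$.

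\textbf{Base case $d\ge t$.} Here $\dim(S/J)_d=\delta$ just says the fat points impose independent conditions in degree $d$, since $\delta=\sum_i\binom{m_i+1}{2}$ in the usual count (the expression $m_i(m_i-1)/2$ as printed looks like an indexing slip to me). So it suffices to show $\reg(S/J)\le t$. I would cite the known regularity bound for fat points with no three collinear, or derive it from the same induction. The two quantities in $t$ are the obstructions: the line $P_1P_2$ meets the scheme in degree $m_1+m_2$, and the conic meets it in degree $\sum_i m_i$. Independence in degree $d$ fails only if one of these exceeds $d+1$, respectively $2d+1$.

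\textbf{Case $\sum_i m_i\ge 2m_1+2m_2$, with $d<t$.} I would use the Castelnuovo sequence
\[
0\to (S/(J:Q))_{d-2}\xrightarrow{\cdot Q}(S/J)_d\to (S/(J+\langle Q\rangle))_d\to 0 .
\]
Because the points are reduced points lying on the smooth conic $C$, we have $J:Q=\bigcap_i\mathfrak p_i^{m_i-1}=J'$, with every multiplicity dropped by one. The right-hand term is controlled on $C\cong\PP^1$: degree-$d$ forms restrict to sections of $\mathcal O_{\PP^1}(2d)$ that must vanish to order $m_i$ at each $P_i$. In this case $2d+1\le\sum_i m_i$ (this uses $d<t=\lceil\sum_i m_i/2\rceil$), so no nonzero section survives. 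Hence the restriction contributes its full $2d+1$ conditions, and $\dim(S/J)_d=(2d+1)+\dim(S/J')_{d-2}$. I read the printed ``$2n+1$'' as $2d+1$. The one subtlety is that the image of $S_d$ in $H^0(\mathcal O_C(2d))$ is everything, which holds because the conic is projectively normal.

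\textbf{Case $\sum_i m_i<2m_1+2m_2$, with $d<t=m_1+m_2-1$.} Now I would take residuals with respect to $L$ instead. Then $J:\lambda$ lowers $m_1$ and $m_2$ by one and leaves the other multiplicities unchanged; genericity guarantees no other $P_i$ lies on $L$. The line imposes $d+1$ conditions, because a degree-$d$ form on $L\cong\PP^1$ vanishing to orders $m_1,m_2$ must vanish identically once $m_1+m_2\ge d+1$. This gives $\dim(S/J)_d=(d+1)+\dim(S/J')_{d-1}$, again reading ``$n+1$'' as $d+1$. This matches the printed definition of $m_i'$ for $i\ge3$; for $i=1,2$ I interpret the recursion as lowering $m_1,m_2$.

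\textbf{Main obstacle.} I expect the difficulty to be the bookkeeping rather than any single estimate. After each step the multiplicities must be re-sorted, and one has to check that the recursion stays inside the hypotheses: at most $n$ points on the same conic, no three collinear. One also has to confirm that when the recursion reaches $d'\ge t'$ for the new data, the base-case formula gives the right value. That last check is exactly the regularity claim $\reg(S/J)\le t$, so I would prove the two cases and the regularity statement simultaneously by induction on $\sum_i m_i$.
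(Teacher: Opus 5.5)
The paper gives no proof of this statement. It is quoted from Catalisano and only applied, in Example~\ref{ex:5forms2smoothness}, so there is no internal argument to compare against. Your argument is the standard Castelnuovo residual argument that the recursive form of the statement reflects, and it is correct. Your reading of the misprints is also right. The paper itself applies the middle clause as $\dim(S/J)_d=2d+1+\dim(S/J')_{d-2}$. And $\delta$ must be $\sum_i\binom{m_i+1}{2}$: for $m=(1,1)$ and $d=1\ge t$, the printed $\delta$ gives $0$ instead of $2$.

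Several points need tightening.
\begin{itemize}
\item The two recursive clauses need no induction. Each is one application of a residual sequence, valid once $2d+1\le\sum_i m_i$ (conic) or $d+1\le m_1+m_2$ (line). Only the clause $d\ge t$ needs induction or a citation.
\item In the line case, $t$ is not always $m_1+m_2-1$. If $\sum_i m_i=2m_1+2m_2-1$, then $t=m_1+m_2$; for example, $m=(1,1,1)$ gives $t=2$. Your argument survives, because $d<t\le m_1+m_2$ still gives $d+1\le m_1+m_2$.
\item The fact that no $P_i$ with $i\ge 3$ lies on $L$ comes from $L$ meeting the smooth conic in at most two points. It does not come from genericity, which is not a hypothesis here.
\item Suppose you prove the clause $d\ge t$ by your induction. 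The step combines the bound $\dim(S/(J+\langle Q\rangle))_d\ge\min\{2d+1,\sum_i m_i\}$, resp.\ $\dim(S/(J+\langle\lambda\rangle))_d\ge\min\{d+1,m_1+m_2\}$, with the trivial bound $\dim(S/J)_d\le\delta$. It then needs $t'\le t-2$ in the conic case, which holds because at least four $m_i$ are positive there. In the line case it needs $t''\le t-1$.
\item With the printed ceiling, $t''\le t-1$ fails for $m=(2,0,\dots,0)$, where $t=t''=1$. Run the induction with the sharper threshold $\max\{m_1+m_2-1,\lfloor\sum_i m_i/2\rfloor\}$ instead, and the bookkeeping closes. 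This is exactly the threshold your obstruction heuristic produces, and it is the standard regularity bound for fat points in general position.
\end{itemize}
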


\begin{example}\label{ex:5forms2smoothness}
Consider the case of a generic hyperplane arrangement $\A$ with five hyperplanes and the smoothness distribution given by $\br(H) =2$ for each $H$. The ideal given by the sequence of hyperplanes equals $I =\langle \ell_1^3, \ell_2^3, \ell_3^3, \ell_4^3, \ell_5^3 \rangle \subset R$ and let $P_i$ be the points dual to $\ell_i$ for $i = 1,\dots,5$, as in \cref{eq:dualPoint}. 
Denote by $J = \bigcap_{i = 1}^5\mathfrak{p}_i$ where $\mathfrak{p}_i$ is the vanishing ideal of $P_i$ for $i = 1,\dots,5$. By apolarity, we have:
    \[
    \dim \left(R/I \right)_d = \dim \left( J^{(d-2)} \right)_d ,  ~~\mbox{for all $d \geq 0$},
    \]
    where for any $s$, the ideal $J^{(s)} = \mathfrak{p}_1^{s} \cap \mathfrak{p}_2^{s} \cap \mathfrak{p}_3^{s} \cap \mathfrak{p}_4^{s} \cap \mathfrak{p}_5^{s}$ is the ideal consists of all polynomials that vanish at $P_1,P_2,P_3,P_4,P_5$ to order at least $s$. Since the $5$ points lie on an irreducible conic, we can compute explicitly $\dim \left( J^{(d-2)} \right)_d$ using \cref{thm:catalisanoAlgorithm}. We first claim that $\dim \left( J^{(d-2)} \right)_d = 0$, or equivalently, $\dim (S/J^{(d-2)})_d = {d+2 \choose 2}$.

    We can check that the numerical conditions given in \cref{thm:catalisanoAlgorithm} are satisfied. Therefore, $\dim (S/J)_d = 2d+1 + \dim (S/J')_{d-2}$. 
    Using \cite[Theorem 3.1(a)]{Catalisano-1991} again, we have
    \[
    \dim(S/J^{(d)})_d = 2d+1 + \dim(S/J^{(d-2)})_{d-2} = 2d+1 + 2(d-2)+1 + \dim(S/J^{(d-4)})_{d-4},
    \]
Since five points lie on an irreducible conic, $\dim(S/J^{(d-4)})_{d-4} = {d-2 \choose 2}$. It follows that 
    \[
    \dim(S/J^{(d)})_{d}= 2d+1+2(d-2)+1 + {d-2 \choose 2} = {d+2 \choose 2},
    \]
    as desired. Thus, $\dim \left(R/I \right)_d = 0$
    for all $d\ge 5$. 
    Furthermore, since the $5$ points lie on an irreducible conic, $\dim \left(R/I \right)_4 = \dim \left(J \right)_4=1$, hence, $\dim \left(R/I \right)_3=10-5=5$. Lastly, $\dim\left(R/J \right)_2 = \dim S_2=6$, and $\dim\left(R/J \right)_1 = \dim S_1=3$. Therefore, we have     $$\hilb_{\HK_0(\bm{\ell}^{\br})}(t)=1+3t+6t^2+5t^3+t^4.$$ 
    By \Cref{prop:someKoszul}(1), $ \hilb_{\HK_2(\bm{\ell}^{\br})}(t)=t^8+5t^9+6t^{10}+3t^{11}+t^{12}$.
    By \Cref{prop:someKoszul}(2), 
    \[
    \hilb_{\HK_1(\bm{\ell}^{\br})}(t)=\hilb_{\HK_0(\bm{\ell}^{\br})}(t)+\hilb_{\HK_2(\bm{\ell}^{\br})}(t)-\frac{(1-t^3)^5}{(1-t)^3} = t^4+9t^5+12t^6+9t^7+t^8.
    \]
    Thus, we have 
    \begin{center}
    \renewcommand{\arraystretch}{1.5}
    \begin{tabular}{c| c c c c c c c c c c}
       $d$  &  0 &1&2&3&4&5&6&7&8 & $\ge 9$\\
       \hline
       $\dim \HK_1(\ell^2)_d-\dim \HK_0(\ell^2)_d$ &$-1$&$-3$&$-6$&$-5$& 0& 9&12&9&1 &0
    \end{tabular}.
    \end{center}
    
    By \Cref{thm.SplineGenericArrangement}, we obtain
    \begin{center}
    \renewcommand{\arraystretch}{1.5}
    \begin{tabular}{c| c c c c c c c c c c}
       $d$  &  0 &1&2&3&4&5&6&7&8&$\ge 9$\\
       \hline
       $\dim\calS^{2}_d(\Sigma^\A)$ &1&3&6&15&30&51&88&141&211&$11d^2-87d+202$ 
    \end{tabular}.
    \end{center}
\end{example}

\newpage

\end{document}